\let\oldmarginpar\marginpar
\renewcommand\marginpar[1]{\-\oldmarginpar[\raggedleft\footnotesize #1]%
{\raggedright\footnotesize #1}}
\newtheorem{theorem}{Theorem}
\newtheorem{lemma}{Lemma}
\theoremstyle{definition}
\newtheorem{definition}{Definition}
\theoremstyle{remark}
\newtheorem{remark}{Remark}
\newcommand{\Z}{\mathbb{Z}}
\newcommand{\abs}[1]{|#1|}
\newcommand{\Abs}[1]{\left|#1\right|}
\newcommand{\N}{\mathbb{N}}
\newcommand{\R}{\mathbb{R}}
\newcommand{\C}{\mathbb{C}}
\def\l{\lambda}
\def\N{\mathbb{N}}
\def\Z{\mathbb{Z}}
\def\R{\mathbb{R}}
\def\C{\mathbb{C}}
\def\1{\mathbf{1}}
\def\eps{\varepsilon}
\def\a{\alpha}
\def\b{\beta}
\newcommand{\dif}{\mathrm{d}}
\newcommand{\e}{\mathrm{e}}
\newcommand{\im}{\mathrm{i}}
\newcommand{\norm}[1]{\|#1\|}
\begin{document}

 \title[From exact systems to Riesz bases in the Balian--Low theorem]{From exact systems to Riesz bases \\ in the Balian--Low theorem}

\author{Shahaf Nitzan}
\address{Department of Mathematical Sciences, Norwegian University of
Science and Technology (NTNU), NO-7491 Trondheim, Norway}
\email{sigal.nitzan@math.ntnu.no}

\author{Jan-Fredrik Olsen}
\address{Centre for Mathematical Sciences, Lund University, P.O. Box 118, SE-221 00 Lund, Sweden}
\email{janfreol@maths.lth.se}

\thanks{The first author is supported by the ERCIM ``Alain Bensoussan'' fellowship nr. 2009-05.}
\thanks{For a part of this work, the second author is
supported by the Research Council of Norway grant 160192/V30.}

\keywords{Balian--Low theorem, exact systems, frames, Gabor systems, time--frequency analysis, uncertainty principles, Zak transform}
\subjclass[2000]{Primary 42C15; Secondary 42A38, 46E35}

\begin{abstract}
We look at the time--frequency localisation of generators of lattice Gabor systems.
For a generator of a Riesz basis, this localisation is described by the classical Balian--Low theorem.
We establish Balian--Low type theorems for complete and minimal Gabor systems with a frame-type approximation property. 
These results describe how the best possible localisation of a generator is limited by the degree of control over the coefficients  in approximations given by the system, and provide a continuous transition between the classical Balian--Low
conditions and the corresponding
conditions for  generators of complete and minimal systems. 
Moreover, this holds for the non-symmetric
generalisations of these theorems as well.
\end{abstract}

\maketitle

\section{introduction}

For $g\in L^2(\R)$ and $a,b > 0$, the Gabor system generated by $g$ on the lattice $a\Z \times b\Z$ is denoted by
\begin{equation*}
    G(g,a,b):=\{\e^{2\pi \im b m t}g(t-an)\}_{(m,n) \in \Z^2}.
\end{equation*}
Such systems were considered by Gabor \cite{gabor1946theory} and
today they play a prominent role in time--frequency analysis and
its applications \cite{dorfler2001, grochenig1999, kozek1998, strohmer2006}.
An interesting general problem in Gabor analysis is to find ``optimal''  bounds for the time--frequency localisation of the window function $g$, given appropriate constraints on the desired Gabor system. The Balian--Low theorem  gives a precise solution to a version of this problem for Riesz bases.
%

In the context of this paper, a system $G(g,a,b)$ is considered ``good'' if it is at least exact  (i.e., complete and minimal).
It is ``better'' if, in addition, 
the coefficients in the approximations it provides can be, in some sense, controlled (e.g., it is a Riesz basis).
Our main objective is to study 
the time--frequency
localisation of $g$ for
a scale of systems that
lie between Riesz
bases  and exact systems\footnote{In some sense, the Gaussian $g = \exp(-x^2/2)$  has the best possible time--frequency localisation. In this case, the system $G(g,a,b)$ is a frame in $L^2(\R)$ if and only if $ab < 1$ \cite{seip92}. However, it is always over-complete, i.e., it is never exact. The construction of Gabor orthonormal bases or exact systems is more delicate.}.
We  extend an uncertainty principle, known as the Balian--Low theorem, to these systems.

Since we are interested in exact systems, we consider systems $G(g,a,b)$ with $a=b=1$.
This is due to the known fact that if $ab>1$, then
$G(g,a,b)$ is not complete in $L^2(\R)$, while if $ab<1$, it is not
a minimal system there. (See \cite{ramanathan_steger95} for the
first claim. A modification of the same argument gives the second
claim.). Our results can be extended to any lattice $a\Z \times
b\Z$ with $ab=1$ by an appropriate dilation of the generating
function $g$.

\subsection{Balian--Low type theorems for Riesz bases and exact systems} \label{balian-low type section}
The Balian--Low theorem
\cite{balian1981,daubechies1990, low1985} is a manifestation of the uncertainty
principle in the context of Gabor analysis. It states that if the
system $G(g,1,1)$ is a Riesz basis in $L^2(\R)$, then the generator
$g$ must have much worse time--frequency localisation than allowed
by the uncertainty principle. More precisely, if $r\geq 2$, then at
least one of the integrals
\begin{equation}
    \label{e:bl cond for rb, r-r}
     \int_\R |\xi|^{r}|\hat{g}(\xi)|^2\dif \xi, \qquad \qquad \qquad  \int_\R |t|^{r}|g(t)|^2 \dif t,
\end{equation}
must diverge, where $\hat{g}$ denotes the Fourier transform of
$g$. This result is sharp. That is, for any $r<2$ there exists a
function $g\in L^2(\R)$ such that $G(g,1,1)$ is a Riesz basis and
both of the integrals in \eqref{e:bl cond for rb, r-r} converge
\cite{benedetto_czaja_gadzinski_powell2003}.

Among the systems that we consider, Riesz bases have the best
properties while exact systems have the weakest. For
the latter, a Balian--Low type theorem was established
in \cite{daubechies_janssen93}. It states that if the system
$G(g,1,1)$ is exact and $r\geq 4$, then at least one of the
integrals in \eqref{e:bl cond for rb, r-r} must diverge. It follows from our Theorem \ref{symmetric theorem}
below that this result is sharp.

More generally, non-symmetric time--frequency conditions for
generators of such systems have been considered. Namely, for which $r$
and $s$ can both of the integrals
\begin{equation}
    \label{e:bl cond r-s}
     \int_\R |\xi|^{r}|\hat{g}(\xi)|^2\dif \xi, \qquad \qquad \qquad  \int_\R |t|^{s}|g(t)|^2 \dif t,
\end{equation}
converge?  For simplicity of formulations, and without loss of generality, we assume that $r \leq s$ when discussing this
case.
For generators of Riesz bases, non-symmetric conditions were found in
\cite{benedetto_czaja2006, gautam08, grochenig96}: If $G(g,1,1)$ is a
Riesz basis and
\begin{equation}
\label{e:bl cond for rb, c-d}
 \frac{1}{r}+\frac{1}{s}\leq 1,
\end{equation}
 then at least one of the integrals in \eqref{e:bl cond r-s} must diverge.
As above, this result is sharp
\cite{benedetto_czaja_gadzinski_powell2003} (see also
\cite{benedetto_czaja_powell2006}).

The non-symmetric conditions for generators of exact systems were
studied in \cite{heil_powell09}, where it is shown that if the
system $G(g,1,1)$ is exact, and $ r \leq s$ satisfy
\begin{equation}
    \label{e:bl cond for exact, c-d}
    \frac{3}{r}+\frac{1}{s}\leq 1,
\end{equation}
 then at least one of the integrals in \eqref{e:bl cond r-s} must
 diverge. Again, the fact that this result is sharp follows from our
 Theorem \ref{main theorem} below.

The results presented in this paper provide a continuous 
interpolation between the condition $r\geq 2$ for generators of Riesz bases,
and the condition $r\geq4$ for generators of exact systems
mentioned above. Moreover, these results are extended to
the general non-symmetric case, where a similar interpolation is
given between the conditions in \eqref{e:bl cond for rb,
 c-d} and in \eqref{e:bl cond for exact, c-d}. In all cases, the results 
 are sharp.

To obtain these results, we develop some
 new insights into the connection between the time--frequency
 localisation of a function and the smoothness of its
 Zak transform (see Section \ref{smoothness section}).

\subsection{Between Riesz bases and exact systems}
We now describe the family of systems that we consider in this
work.

 Let $H$
be a separable Hilbert space. A system $\{f_n\}$ is a
Riesz basis in $H$ if it is an exact frame, i.e, if it is exact and the
following inequality holds for every $f\in H$:
\begin{equation}
    \label{e:frame cond}
    A\|f\|^2\leq \sum|\langle f, f_n\rangle|^2 \leq B\|f\|^2,
\end{equation}
where $A$ and $B$ are positive constants not depending on $f$.

In most cases, the right-hand side inequality in \eqref{e:frame
cond}, the \textit{Bessel} property, holds automatically.
Therefore, if one wants to relax the frame condition, there is
usually no advantage in changing it. The left-hand side inequality
in \eqref{e:frame cond} is equivalent to \emph{completeness with
$\ell^2$ control over the coefficients}: Every $f\in H$ can be
approximated, with arbitrary small error, by a finite linear
combination $\sum a_nf_n$ with
$\sum |a_n|^2\leq C\|f\|^2$, for some positive constant $C$ not depending  on $f$. 
We are interested in exact systems with a relaxed version of this property. We use the following definition introduced in \cite{nitzan_olevskii07}.

\begin{definition} Given $q\geq 2$, we say that a system
$\{f_n\}$ is a $(C_q)$-system in $H$ (\emph{complete with $\ell^q$
control over the coefficients}) if every $f\in H$ can be
approximated, with an arbitrary small error, by a finite linear
combination $\sum a_nf_n$ with
\begin{equation*}
     \Big(\sum|a_n|^q \Big)^{\frac{1}{q}} \leq C\|f\|,
\end{equation*}
where $C = C(q)$ is a positive constant not depending on $f$.
\end{definition}

Note that all $(C_q)$-systems are complete. In addition, if
$q_1\leq q_2$, then a $(C_{q_1})$-system is also a
$(C_{q_2})$-system. Thus, we obtain a range of systems which
become ``better'' the closer $q$ is to $2$. In this extreme
case, a system is a Bessel $(C_2)$-system if and only if it is a
frame. The following dual formulation \cite{nitzan_olevskii07}
enhances the analogy between $(C_q)$-systems and frames: A system
is a $(C_q)$-system if and only if
\[
c\|f\| \leq \Big(\sum |\langle f,f_n\rangle |^p \Big)^{\frac{1}{p}}, \qquad
\forall f\in H,
\]
 where $1/p+1/q=1$ and $c=c(p)$ is a positive constant not depending on $f$.
 This condition should be compared with the left
inequality in \eqref{e:frame cond}.

 In general, frames and $(C_q)$-systems are not exact. As a system is a Riesz basis if and only if
 it is an exact frame, exact $(C_q)$-systems (which are also Bessel systems) can be considered relaxed forms of Riesz
bases. See Theorem 3 in this
context. In particular, it follows from this theorem that if a
Gabor system $G(g,1,1)$ is exact, then it is also a $(C_{\infty})$-system. 
Therefore, in this case, exact $(C_q)$-systems provide a
continuous scale of systems, ranging from Riesz bases ($q=2$) to
exact systems ($q=\infty$).

\subsection{The main result}
Our main result is Theorem \ref{main theorem}.
We first discuss a simplified version of it; the symmetric case,
where the localisation conditions for the generator $g$ are the
same in time and in frequency.

\begin{theorem}[The symmetric case] \label{symmetric theorem}
Fix $q>2$.
\begin{enumerate}[label=$(\alph*)$]
\item Let $g \in L^2(\R)$ and $r > 4(q-1)/q$. If $G(g,1,1)$ is an
exact $(C_q)$-system in
 $L^2(\R)$, then at least one of the integrals in \eqref{e:bl cond for rb,
 r-r}
   must diverge.
\item Let $r < 4(q-1)/q$. There exists a function $g \in L^2(\R)$
for which $G(g,1,1)$ is an exact $($Bessel$)$ $(C_q)$-system in
$L^2(\R)$ while both integrals in \eqref{e:bl cond for rb,
 r-r} converge.
\end{enumerate}
\end{theorem}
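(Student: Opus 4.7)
The plan is to reduce the problem to analysis of the Zak transform $F := Zg$ on the fundamental domain $Q = [0,1)^2$, exploiting the quasi-periodicity $F(t+1,\omega) = \e^{-2\pi\im \omega} F(t,\omega)$ and $F(t,\omega+1) = F(t,\omega)$, and the fact that $Z\colon L^2(\R)\to L^2(Q)$ is unitary; this is the common framework behind all previous Balian--Low-type theorems. For part (a), I would assume for contradiction that $G(g,1,1)$ is exact and $(C_q)$, that $r > 4(q-1)/q$, and that both integrals in \eqref{e:bl cond for rb, r-r} are finite, and then translate each hypothesis into a property of $F$ on $Q$: exactness becomes $F\neq 0$ a.e.; the $(C_q)$-property, via the diagonalisation of the Gabor frame operator by $Z$ and the dual characterisation recalled in the introduction, should become an integrability statement of the form $1/|F|\in L^p(Q)$ with $1/p + 1/q = 1$; and the convergence of the two integrals, combined with the identity $Z\hat g(t,\omega) = \e^{2\pi\im t\omega}\,Zg(-\omega,t)$, yields Sobolev-type smoothness of order $r/2$ for $F$ in each of its variables. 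The content of Section \ref{smoothness section} is precisely to make this last step quantitative.

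The rest of part (a) then has three steps. First, since $r > 2$, an anisotropic Sobolev embedding shows that $F$ is continuous on $Q$. Next, a classical topological argument using the quasi-periodicity forces $F$ to have at least one zero $z_0\in Q$. Finally, the crux: the Sobolev-type regularity of order $r/2$ forces $F$ to vanish at $z_0$ sharply enough that $\int_Q |F|^{-p}$ diverges as soon as $rp/2 \geq 2$, that is $r \geq 4(q-1)/q$, so the strict hypothesis yields the desired contradiction with $1/|F| \in L^p(Q)$. I expect this sharp quantitative interplay between anisotropic Sobolev regularity at an isolated zero and $L^p$-integrability of $1/|F|$ to be the technical heart of the argument and the main obstacle; it is exactly the role played by the new results about the smoothness of the Zak transform.

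For part (b), I would construct $g$ by prescribing $F = Zg$ in the factored form $F(t,\omega) = \e^{\im\psi(t,\omega)}\,h(t,\omega)$, where $\psi$ is an explicit phase built to enforce the quasi-periodicity, and $h$ is a smooth, bounded, non-negative function on the torus with a single isolated zero at some $z_0$ of controlled order $\alpha$ in the range $r/2 < \alpha < 2(q-1)/q$; such an $\alpha$ exists precisely because $r < 4(q-1)/q$. Then $F \neq 0$ a.e.\ gives exactness, $|F|$ bounded gives the Bessel property, $\alpha p < 2$ gives $1/|F|\in L^p(Q)$ and hence the $(C_q)$-property through the same correspondence used in part (a), and the vanishing rate $\alpha > r/2$ ensures that both localisation integrals in \eqref{e:bl cond for rb, r-r} converge. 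The delicate point in part (b) is to upgrade these Zak-side statements to the original $L^2(\R)$ definitions, in particular to realise the $(C_q)$-property as genuine finite approximations of arbitrary $f\in L^2(\R)$ with $\ell^q$-controlled coefficients.
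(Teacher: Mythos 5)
Your reduction to the Zak transform $F=Zg$, the use of quasi-periodicity to force a zero, and the idea of relating the vanishing order of $F$ at that zero to an $L^p$-integrability condition on $1/|F|$ are all the right shape and do match the paper's strategy. However, two of the concrete quantitative claims are incorrect, and as a result the plan as written would not close.

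First, the claimed characterisation ``$(C_q)$ $\Leftrightarrow$ $1/|F|\in L^{p}(Q)$ with $1/p+1/q=1$'' is not what the paper establishes, and it is almost certainly not true as an equivalence. Passing through the dual system $h_{m,n}=e^{2\pi \im(mx+ny)}/w$ (with $w=|F|^2$) and Hausdorff--Young followed by H\"older, what one gets is only the one-sided \emph{sufficient} condition: $1/|F|^2\in L^{q/(q-2)}(Q)$, i.e.\ $1/|F|\in L^{2q/(q-2)}(Q)$, implies $(C_q)$ (Lemma~\ref{l:basic lemma, weight version}$(a)$/Lemma~\ref{l:basic lemma, function version}$(a)$). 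Note $2q/(q-2)\neq q/(q-1)$. For the converse direction, which is what part $(a)$ actually needs, the paper cannot invert Hausdorff--Young; instead it constructs an explicit bad test function $h_{\alpha,\beta}$ (the $f_{\alpha,\beta}$ of \eqref{e: def of f, a,b}, recentred at the zero of $Zg$), shows it lies in $L^2_{|Zg|^2}(Q)\cap L^1(Q)$, and proves by a delicate Fourier-coefficient estimate (Lemma~\ref{coefficient lemma}) that $\sum|\widehat{h_{\alpha,\beta}}(m,n)|^q=\infty$; this violates the criterion of Lemma~\ref{l:basic lemma, function version}$(b)$. That test-function construction is the genuine technical heart that your plan omits.

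Second, the H\"older exponent at the zero is not $r/2$. The paper's Lemma~\ref{zak lemma}, which in the symmetric case $r=s$ with $2<r<4$ gives $|Zg(z)-Zg(z_0)|^2\leq C|z-z_0|^{r-2}$, yields H\"older exponent $(r-2)/2$, not $r/2$; this is the expected loss of one order from anisotropic Sobolev embedding in two dimensions, and it is sharp (the constructions in Section~\ref{final section} achieve it up to an $\varepsilon$). Your two errors happen to cancel at the single threshold $r=4(q-1)/q$: indeed $\tfrac{r}{2}\cdot\tfrac{q}{q-1}=2$ and $\tfrac{r-2}{2}\cdot\tfrac{2q}{q-2}=2$ both give $r=4(q-1)/q$. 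But if one pairs the \emph{correct} H\"older exponent $(r-2)/2$ with your claimed exponent $p=q/(q-1)$, the computation produces the wrong threshold $r\geq (6q-4)/q$. So the agreement is a coincidence, not a verification; the individual steps must be fixed. For part $(b)$ the same two corrections apply: the vanishing order of $|F|$ one should aim for is in the range $(r-2)/2<\alpha<(q-2)/q$, not $r/2<\alpha<2(q-1)/q$, and the $(C_q)$ property is then obtained from $1/|F|^2\in L^{q/(q-2)}(Q)$, after which one must still construct a genuinely quasi-periodic $F$ and control its smoothness on $Q$ (the content of Sections~\ref{building blocks section}--\ref{final section}).
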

Note that when $q=2$, we have $4(q-1)/q=2$. Recall that an exact
(Bessel) $(C_2)$-system is a Riesz basis, and so, in this case, Theorem
\ref{symmetric theorem} should be compared with the classical
Balian--Low theorem (which studies the condition $r\geq 2$). On the
other hand, when $q$ tends to infinity, then $4(q-1)/q$ tends to
$4$, which is the best localisation possible for generators of
exact systems according to the corresponding Balian--Low type theorem  (which studies the condition $r\geq 4$). In
particular, part $(b)$ of Theorem \ref{symmetric theorem} implies
that this result   is sharp.

To state our main result in full generality, we introduce some
notation. For $2\leq q\leq \infty$, denote by $\Gamma_q$
the restriction to the area $0 \leq v \leq u $ of the curve
determined by the equations
\begin{equation}
	\begin{split} \label{gamma q}
    \frac{3q-2}{q+2}\cdot u+v =1, \qquad &u+3v\leq 1, \\
         u + v= \frac{q}{2(q-1)}, \qquad &u+3v> 1.
         \end{split}
\end{equation}
This curve corresponds to EFG in Figure \ref{figure}.
As can be seen in the figure,  the sector $0\leq v\leq u$ can be written as a partition
\begin{equation*}
    S_q \cup \Gamma_q \cup W_q; \qquad (0,0)\in S_q, (1,1)\in W_q,
\end{equation*}
where $S_q$ is represented by the shaded area below the curve
$\Gamma_q$ in the Figure, and $W_q$ by the non-shaded area above
the curve.
Accordingly, we say that a point $(u,v)$ in the sector $0 \leq v
\leq u$ lies either \textit{above}, \textit{on},  or
\textit{below} $\Gamma_q$.

We note that in the area
discussed, the curve $\Gamma_2$ is given by $u+v=1$ and the curve
$\Gamma_{\infty}$ by $3u+v=1$, (the segments BC and AD in Figure
\ref{figure}, respectively). In Theorem \ref{main theorem} the areas
below these curves represent the conditions for Riesz bases and
exact systems given in \eqref{e:bl cond for rb,
 c-d} and \eqref{e:bl cond for exact, c-d}.

We are now ready to state and discuss our main result. 
\begin{theorem} \label{main theorem} \label{main result}
   Fix $q>2$ and let $\Gamma_q$ be as above.
\begin{enumerate}[label=$(\alph*)$]
\item Let $g \in  L^2(\R)$ and $r \leq s$ be such that the point
   $(\frac{1}{r},\frac{1}{s})$ is below the curve $\Gamma_q$. If $G(g,1,1)$ is an exact $C_q$-system in $L^2(\R)$,
    then at least one of the integrals in $\eqref{e:bl cond r-s}$
    must diverge.
\item Let $r \leq s$ be such that the
point $(\frac{1}{r},\frac{1}{s})$ is above the curve $\Gamma_q$.
    Then there exists a function $g \in L^2(\R)$ for which $G(g,1,1)$ is an exact $($Bessel$)$
     $(C_q)$-system in $L^2(\R)$ while
    both integrals in $\eqref{e:bl cond r-s}$ converge.
\end{enumerate}
For $s= \infty$, the condition that the right-hand integral in
\eqref{e:bl cond r-s} is convergent should be replaced by the
condition that $g$ has compact support.
\end{theorem}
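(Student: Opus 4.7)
The plan is to route both parts through the Zak transform
\begin{equation*}
F(x,\xi)=Zg(x,\xi)=\sum_{n\in\Z}g(x+n)\e^{-2\pi\im n\xi},
\end{equation*}
which simultaneously encodes the structural properties of $G(g,1,1)$ and, through its joint smoothness, the time--frequency localisation of $g$. Because $F$ is periodic in $\xi$ and quasi-periodic in $x$ with a nontrivial phase factor, every continuous representative of $F$ must vanish somewhere on the fundamental square $Q=[0,1)^2$. Both the necessity in part $(a)$ and the sharpness in part $(b)$ come from balancing how slowly $F$ is forced to decay at such a zero against how large $1/F$ is allowed to be in $L^p(Q)$, where $1/p+1/q=1$.

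For part $(a)$, I would first record three dictionaries. Firstly, a Zak-transform characterisation of exact $(C_q)$-systems, analogous to the classical one for Riesz bases: $G(g,1,1)$ is an exact $(C_q)$-system if and only if $F\neq 0$ almost everywhere on $Q$ and $1/F\in L^p(Q)$. Secondly, a translation of the moment integrals into anisotropic fractional Sobolev regularity of $F$: convergence of $\int_\R|\xi|^r|\widehat g(\xi)|^2\dif\xi$ corresponds to regularity of order $r/2$ in the $x$-variable, while convergence of $\int_\R|t|^s|g(t)|^2\dif t$ corresponds to regularity of order $s/2$ in the $\xi$-variable. Thirdly, the unavoidable zero of $F$ coming from quasi-periodicity. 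The theorem is thereby reduced to a function-theoretic assertion on the torus: an anisotropically smooth quasi-periodic function with a forced zero cannot have an $L^p$-integrable reciprocal whenever $(1/r,1/s)$ lies below $\Gamma_q$. The analytic heart of this reduction is a local study around a zero $z_0$ of $F$: the anisotropic Sobolev regularity of orders $(r/2,s/2)$ limits the rate at which $|F|$ may decay toward $z_0$, and integrating $|F|^{-p}$ over a neighbourhood of $z_0$ produces a sharp compatibility threshold on $(r,s,p)$. The piecewise shape of $\Gamma_q$ then reflects two competing local models: on the second piece $u+v=q/(2(q-1))$ (the balanced regime $u+3v>1$), an isotropic Sobolev/Hölder embedding controls the order of vanishing of $F$ symmetrically in both variables; on the first piece $\frac{3q-2}{q+2}u+v=1$ (the unbalanced regime $u+3v\leq 1$) one must invoke a genuinely anisotropic estimate in which stronger regularity in $\xi$ compensates for weaker regularity in $x$ through a direction-adapted vanishing rate.

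For part $(b)$, I would construct windows by prescribing their Zak transforms directly as
\begin{equation*}
F(x,\xi)=\varphi(x,\xi)\cdot\psi_{\a,\b}(x,\xi),
\end{equation*}
where $\varphi$ is smooth and bounded away from zero and $\psi_{\a,\b}$ vanishes at a single point with an anisotropic order $(\a,\b)$ tuned so that $F$ has exactly the targeted bi-directional regularity while $1/F\in L^p(Q)$. The endpoint constructions of \cite{benedetto_czaja_gadzinski_powell2003} and \cite{heil_powell09} correspond respectively to $q=2$ and $q=\infty$, and the present family should interpolate between them by varying the exponents $\a$ and $\b$. From such an $F$, the inverse Zak transform returns $g\in L^2(\R)$, and the Bessel property, the $(C_q)$-condition, and the convergence of both moment integrals can then be read off directly from the construction of $F$.

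The principal obstacle is establishing the sharp anisotropic dictionary between joint regularity of $F$ and integrability of $1/F$ that produces the first piece of $\Gamma_q$: an isotropic Sobolev embedding would only recover the second piece, so the refined insights into the smoothness of the Zak transform announced in Section \ref{smoothness section} must carry the decisive step. On the construction side, the parallel difficulty is ensuring sharpness uniformly along both branches of $\Gamma_q$, not merely at the endpoints, which dictates how finely the exponents $(\a,\b)$ in $\psi_{\a,\b}$ must be tuned relative to $(r,s,q)$.
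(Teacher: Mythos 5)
Your overall framework for part $(a)$ — Zak transform, anisotropic smoothness from the moment integrals, a forced zero from quasi-periodicity, and Lemma~\ref{zak lemma}-type estimates on how slowly $Zg$ may decay at that zero — is aligned with the paper. But the reduction you propose rests on a dictionary the paper never establishes and which your argument would need as an \emph{equivalence}: you claim that $G(g,1,1)$ is an exact $(C_q)$-system if and only if $Zg\neq 0$ a.e.\ and $1/Zg\in L^p(Q)$ with $1/p+1/q=1$. What the paper actually proves (Lemma~\ref{l:basic lemma, weight version}/\ref{l:basic lemma, function version}) is a one-sided sufficient criterion — $1/|Zg|^2\in L^{q/(q-2)}(Q)$ implies $(C_q)$ — note the different exponent, and, crucially, no converse. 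Your plan of integrating $|F|^{-p}$ near the zero and getting a contradiction therefore does not close: failure of the sufficient criterion does not preclude the system from being $(C_q)$. The paper instead invokes the other half of that lemma, which requires \emph{exhibiting} a test function $h\in L^2_{|Zg|^2}(Q)\cap L^1(Q)$ with $\sum|\hat{h}(m,n)|^q=\infty$. The decisive technical step is Lemma~\ref{coefficient lemma}: a lower bound on the Fourier coefficients of the singular family $f_{\alpha,\beta}$ showing that the $\ell^q$ sum diverges exactly in the parameter range dictated by $\Gamma_q$. That construction — tuning $(\alpha,\beta)$ so that $h_{\alpha,\beta}$ blows up fast enough near the zero to make the $\ell^q$ sum diverge, yet slowly enough to remain in $L^2_{|Zg|^2}$ given the decay estimate from Lemma~\ref{zak lemma} — is entirely absent from your outline, and it is what makes part $(a)$ go through.

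For part $(b)$, prescribing $Zg$ and inverting is indeed the right strategy, but $F=\varphi\cdot\psi_{\alpha,\beta}$ with $\varphi$ smooth and nonvanishing is too crude. The forced zero and the quasi-periodic phase constraint $Zg(x+1,y)=e^{2\pi\im y}Zg(x,y)$ interact: a purely real-amplitude construction cannot be made smooth across the lattice. The paper handles this by building the argument and modulus separately — using the auxiliary phase function $H_\lambda$ from \eqref{part2_definition of H} (the idea from \cite{benedetto_czaja_gadzinski_powell2003}) in the regime \eqref{case two for part b}, and a twisted difference $\Upsilon(x,y)=\Theta(x,y)-\Theta(-x,y)e^{2\pi\im y}$ in the regime \eqref{case one for part b} — together with a full set of anisotropic smoothness estimates (Lemmas~\ref{part2_modulus lemma} and~\ref{part2_full lemma}) for these composite building blocks. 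The two-branch structure of $\Gamma_q$ really does demand two distinct local models; a single family $\psi_{\alpha,\beta}$ varying $\alpha,\beta$ continuously will not by itself produce the kink at $u+3v=1$. Your sketch identifies the right ingredients at the level of slogans but omits the phase construction, the regime split, and the verification that the chosen local models actually achieve the required joint regularity and $(C_q)$ property simultaneously.
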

\begin{figure}
    \includegraphics{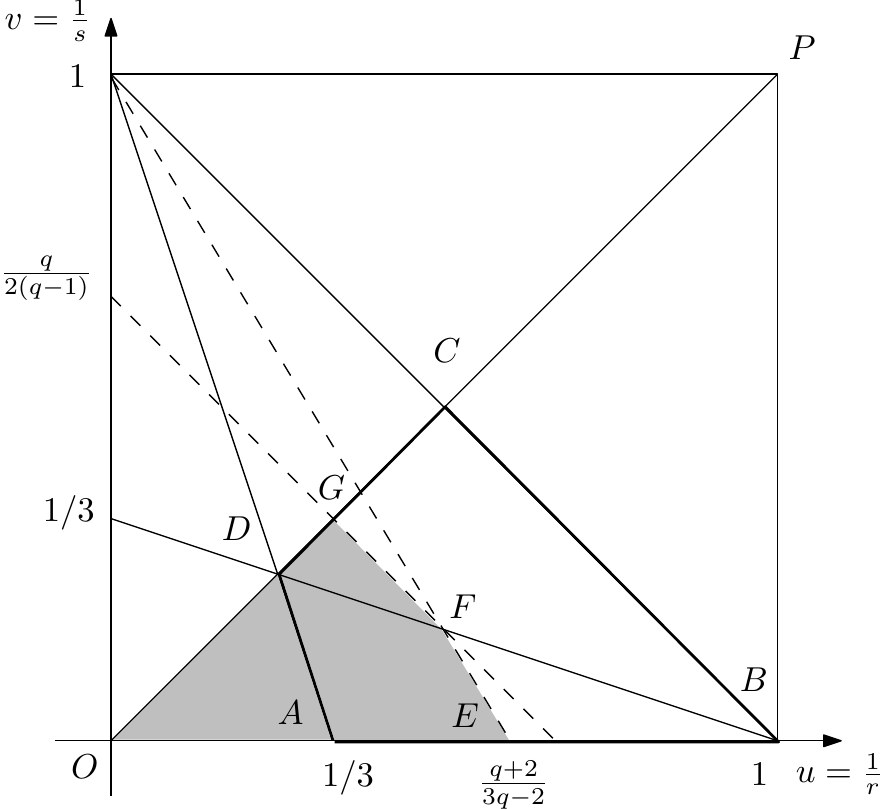}
    \caption{Illustration of Theorem \ref{main theorem} over $Q= [0,1]^2$.}
    \label{figure}
\end{figure}
We take a closer look at Figure 1.
The symmetric case of Theorem \ref{symmetric theorem} is precisely
Theorem \ref{main theorem} when restricted to the segment $CD$.
In particular, the extremal cases of Riesz bases ($q=2$) and exact
systems ($q=\infty$) correspond to the vertices $C = (1/2,1/2)$
and $D=(1/4,1/4)$ of the segment, respectively.

The non-symmetric Balian--Low type theorems for Riesz bases
and exact systems, mentioned in Section \ref{balian-low type section},  correspond to the segments BC and AD, respectively. 
Indeed, if the point $(1/r,1/s)$
is below the segment $BC$, i.e., the curve $\Gamma_2 = \{u+v=1\}$, then
it satisfies the condition in \eqref{e:bl cond for rb, c-d} and
therefore, for a generator of a Riesz basis, at least one of the
integrals in \eqref{e:bl cond r-s}  must diverge. Similarly, if the
point is below $AD$, i.e., the curve $\Gamma_{\infty} = \{3u+v=1\}$,
then the condition in \eqref{e:bl cond for exact, c-d} is
satisfied, which guarantees that for a generator of an exact system, at least one
of these integrals must diverge.

Theorem \ref{main
theorem} addresses generators of exact systems which cannot give
Riesz bases, and therefore is most interesting for the region
$ABCD$: the area between the curves $\Gamma_{2}$ and
$\Gamma_{\infty}$. As $q$ varies from $2$ to $\infty$, the curves
$\Gamma_q$ provide a continuous interpolation between $\Gamma_2$
and $\Gamma_{\infty}$, covering all of this area. Therefore, the
conditions described in Theorem \ref{main theorem} provide a
continuous transition between the Balian--Low type
conditions for generators of Riesz bases and the corresponding
conditions for the generators of exact systems.

Note that Theorem \ref{main theorem} does not address points
$(1/r,1/s)$ which are on the curve $\Gamma_q$. See also Remark
\ref{last remark} in this context.

\subsection{The structure of the paper}
We begin   by laying the 
groundwork for our proof of Theorem \ref{main result}. In Section \ref{reformulation section}, we
  relate    the $(C_q)$ property of Gabor systems   
with the regularity of the Zak transform of the generators.
It is known that  the time--frequency localisation of a function and the regularity of its Zak transform are connected. In 
Section \ref{smoothness section}, we study this connection further, obtaining both Lipschitz and integral type estimates.

With this, we give a proof for part $(a)$ of Theorem \ref{main result} in
Section \ref{proof - part a}.
To prove part $(b)$ of  the theorem, we introduce in Section \ref{building blocks section} the building blocks for the constructions needed, before completing the proof in Section \ref{final section}.
In Section \ref{remarks section} we give concluding remarks.

%
%
\section{A reformulation of the problem} \label{reformulation section}
We establish some machinery, formulated in Lemma \ref{l:basic lemma, function version}, which helps us determine whether a system $G(g,1,1)$ is a $(C_q)$-system  by looking at the Zak transform of its generator.

\subsection{Some notation}
For $d \in \N$, the Fourier transform of a function $g\in L^2(\R^d)$ is denoted by
$\hat{g}$ and defined as the usual extension of the Fourier
transform on $L^1(\R^d)$:
\begin{equation*}
    \hat{g}(\xi)= \int_{\R^d}g(t)\e^{-2\pi \im t\cdot \xi} \dif t,  \qquad \qquad \xi \in \R^d.
\end{equation*}
We set $Q=[0,1]^2$. The Fourier coefficients
of a
function $g \in L^1(Q)$ are given by
\begin{equation*}
     \hat{g}(m,n) = \iint_Q g(x,y) \e^{-2\pi \im (mx+ny)} \dif x \dif y, \qquad \qquad (m,n)\in \Z^2.
\end{equation*}
Whenever an $L^p$ integrable function is almost everywhere equal to a continuous function, we assume that
they are equal everywhere. This is possible since the pointwise estimates we make are only
used in integral expressions.

For functions defined on some subset of $\Omega \subset \R^d$, we use the notation $f_x^{(k)}$ for   the $k$-th partial derivative with respect to a coordinate $x$. By $C^k(\Omega)$, we denote the class of functions whose partial derivatives of order $k$ exist and are continuous on $\Omega$. The functions that satisfy this for every $k \in \N$, is said to be of the class $C^\infty(\Omega)$.
Also, by $C$ we denote constants which may change from step to step.

\subsection{A characterisation of exact $(C_q)$-systems}
A complete system $\{f_n\}$ in a Hilbert space $H$ is called
$\textit{exact}$ if it becomes incomplete when any one of its
members is removed. This condition holds if and only if there
exists a unique system $\{g_n\}\subset H$ such that $\langle
f_m,g_n\rangle = \delta_{m,n}$, where $\delta_{m,n}$ is
the Kroenecker delta.
In this case, $\{g_n\}$ is called the {dual} system of $\{f_n\}$.

The following characterization of exact $(C_q)$-systems can be
found in \cite{nitzan09}. We include a proof for the sake of
completeness. Note that if $q=2$, and the system is in addition a Bessel
system, then condition $(c)$ of this theorem coincides with the known
characterization of Riesz bases (see, for example,
\cite{young01}).

\begin{theorem}
\label{t:characterisation for exact c_q}
 Fix $q\geq 2$ and let
$\{f_n\}$ be a system in $H$. The following are equivalent.
\begin{enumerate}[label=$(\alph*)$]
\item The system $\{f_n\}$ is an exact $(C_q)$-system.
\item The system $\{f_n\}$ is exact and
    \begin{equation*}
        \Big(\sum|\langle f,g_n\rangle|^q\Big)^{\frac{1}{q}}\leq
        C\|f\|, \qquad \forall f \in H,
    \end{equation*}
   where $\{g_n\}$  is the dual
    system of $\{f_n\}$.
\item The system $\{f_n\}$ is complete and
    \begin{equation*}
        \Big(\sum |a_n|^q \Big)^{\frac{1}{q}}\leq C\Big\|\sum a_nf_n\Big\|,
    \end{equation*}
    for every finite sequence of numbers $\{a_n\}$.
\end{enumerate}
\end{theorem}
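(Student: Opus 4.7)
The plan is to prove the theorem by establishing the chain $(c) \Rightarrow (a) \Rightarrow (b) \Rightarrow (c)$. The guiding observation is that whenever the system is exact, biorthogonality with the dual $\{g_n\}$ forces the coefficients of any finite linear combination $h = \sum_n a_n f_n$ to satisfy $a_n = \langle h, g_n\rangle$. This identity makes $(b)$ and $(c)$ essentially the same statement on the finite span of $\{f_n\}$, while the $(C_q)$ property in $(a)$ converts the finite-span inequality into a bound on the canonical coefficients via a limit argument.

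For $(c) \Rightarrow (a)$, I would first extract exactness: completeness is given, and if some $f_{n_0}$ lay in the closed span of the other vectors, a sequence of finite combinations of the form $-f_{n_0} + \sum_{n \neq n_0} a_n^{(k)} f_n$ would have norm tending to zero while the $\ell^q$-norm of its coefficient vector is bounded below by $1$, contradicting $(c)$. For the $(C_q)$ property, for each $\varepsilon > 0$ pick by completeness a finite combination $h = \sum a_n f_n$ with $\|f - h\| < \varepsilon$; then $(c)$ yields $(\sum |a_n|^q)^{1/q} \leq C\|h\| \leq C(\|f\| + \varepsilon)$, which gives the definition of a $(C_q)$-system after a harmless adjustment of the constant. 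The implication $(b) \Rightarrow (c)$ is the quickest: exactness is assumed in $(b)$, and biorthogonality turns the inequality of $(b)$ directly into that of $(c)$ when applied to a finite combination.

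The main obstacle is $(a) \Rightarrow (b)$. By exactness the dual $\{g_n\}$ exists; fix $f \in H$, and for each $\varepsilon > 0$ use the $(C_q)$ property to produce a finite combination $\sum_n b_n^{(\varepsilon)} f_n$ with error less than $\varepsilon$ and $\ell^q$-coefficient norm at most $C\|f\|$. Pairing the error $f - \sum_n b_n^{(\varepsilon)} f_n$ against $g_m$ and invoking biorthogonality yields
\[
\bigl| \langle f, g_m\rangle - b_m^{(\varepsilon)} \bigr| \leq \varepsilon \|g_m\|,
\]
so $b_m^{(\varepsilon)} \to \langle f, g_m\rangle$ coordinatewise as $\varepsilon \to 0$. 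An application of Fatou's lemma to counting measure then transfers the uniform bound across the limit and delivers $(\sum_m |\langle f, g_m\rangle|^q)^{1/q} \leq C\|f\|$. The subtlety of this step is that the $(C_q)$ property only provides \emph{approximate} expansions whose coefficients are not a priori tied to $\langle f, g_n\rangle$; it is precisely the coordinatewise convergence combined with the lower semicontinuity of the $\ell^q$ norm that allows the passage to the limit.
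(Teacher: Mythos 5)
Your proposal is correct and follows essentially the same cycle $(a)\Rightarrow(b)\Rightarrow(c)\Rightarrow(a)$ as the paper, with the same observations: biorthogonality identifies the coefficients of finite combinations, exactness follows from the coefficient estimate by an approximation-to-contradiction argument, and $(C_q)$-control for all of $H$ follows from the finite-span inequality after normalizing the approximant. The only cosmetic difference is in $(a)\Rightarrow(b)$, where you pass to the limit via coordinatewise convergence and Fatou's lemma on counting measure, while the paper achieves the same thing by truncating to the first $M$ indices and letting $M\to\infty$; both are the same idea of transferring a uniform $\ell^q$ bound through a pointwise limit.
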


\begin{proof}
$(a)\Rightarrow (b)$: Let $\{g_n\}$ be the dual
system of $\{f_n\}$ and choose $f\in H$. Fix an integer $M>0$.
Since $\{f_n\}$ is a $(C_q)$-system,  there exists a finite linear
combination  ${\tilde{f}=\sum a_nf_n}$ that approximates $f$ in
norm, and satisfies
\begin{equation*}
    \Big(\sum^M_{n=1} |\langle \tilde{f}, g_n\rangle|^q\Big)^{\frac{1}{q}}
    =
    \Big(\sum^M_{n=1} |a_n|^q\Big)^{\frac{1}{q}}
    \leq C \norm{f}.
\end{equation*}
Since $f$ is approximated by $\tilde{f}$, we have  
\begin{equation*}
    \Big(\sum^M_{n=1} |\langle f, g_n\rangle|^q\Big)^{\frac{1}{q}}
 \leq C \norm{f}.
\end{equation*}
The conclusion follows.

 $(b)\Rightarrow (c)$: This implication is obvious.

 $(c)\Rightarrow (a)$: First, if $\{f_n\}$ is not exact then there exists an
$n_0$ for
 which $f_{n_0}$ lies in the closed span of $\{f_n\}_{n\neq n_0}$.
 So, for $\epsilon >0$, there exists a finite linear
 combination $\tilde{f}=\sum_{n\neq n_0}a_nf_n$ such that
 $\|f_{n_0}-\tilde{f}\|<\epsilon$. This implies that
 $(1+\sum_{n\neq n_0}|a_n|^q)^{{1}/{q}}\leq C\epsilon$. By choosing $\epsilon$
 sufficiently small, we get a contradiction.

 Next, let $f\in H$ and $\epsilon>0$. Since $\{f_n\}$ is complete,
 there exists a finite linear combination $\tilde{f}=\sum a_nf_n$ which
 approximates $f$ in norm and satisfies $\|\tilde{f}\|\leq\|f\|$.
 It follows that
$\left(\sum|a_n|^q\right)^{{1}/{q}}\leq C\|\tilde{f}\|\leq C\|f\|$, 
 and the proof is complete.
\end{proof}

\begin{remark} 
In particular, Theorem \ref{t:characterisation for exact c_q} implies that if a system $G(g,1,1)$ is
exact, then it is also a $(C_{\infty})$-system. This follows from
the implication $(b) \Rightarrow (a)$ and the fact that for such a
system, the dual system also takes the form $G(h,1,1)$ for some
function $h\in L^2(\R)$.
\end{remark}

\subsection{Exponential $(C_q)$-systems and weighted $L^2$ spaces}

Given a weight $w \in L^1(Q)$ satisfying $w > 0$ almost everywhere,
the weighted space $L^2_w(Q)$ is defined by
\begin{equation*}
    L^2_w(Q):= \left\{g : \norm{g}^2_{L^2_w(Q)} = \iint_Q|g|^2 \, w \; \dif x \dif y <\infty \right\}.
\end{equation*}
The system of exponentials
\begin{equation}\label{E}
    E :=\left\{\e^{2\pi \im (mx+ny)}\right\}_{m,n \in \Z}
\end{equation}
is complete in $ L^2_w(Q)$.
Moreover, it is easy to check that $E$ is exact in the space if
and only if $1/w \in L^1(Q)$. In this case, the dual system of $E$
consists of the functions 
\begin{equation}
\label{e:bi-orth}
h_{m,n} := \frac{1}{w} \, \e^{2\pi \im (mx + ny)}.
\end{equation}

\begin{lemma}
\label{l:basic lemma, weight version}
    Fix $q>2$ and let $w \in L^1(Q)$ satisfy $w > 0$ almost everywhere.
    \begin{enumerate}[label=$(\alph*)$]
        \item If $1/w \in L^{\frac{q}{q-2}}(Q)$, then
        $E$ is an exact $(C_q)$-system in $L^2_w(Q)$.
        \item If  there exists a
        function $g\in L^2_w(Q) \cap L^1(Q)$ such that
        \begin{equation*}
            \sum_{m,n\in \Z}|\hat{g}(m,n)|^q=\infty,
        \end{equation*}
        then $E$ is not an exact $(C_q)$-system in $L^2_w(Q)$.
    \end{enumerate}
\end{lemma}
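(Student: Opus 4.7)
The plan is to apply Theorem \ref{t:characterisation for exact c_q} directly, exploiting the fact that the dual system of $E$ in $L^2_w(Q)$ has the explicit form \eqref{e:bi-orth}. A direct calculation shows that for any $f \in L^2_w(Q) \cap L^1(Q)$,
\begin{equation*}
    \langle f, h_{m,n}\rangle_{L^2_w}
    = \iint_Q f(x,y)\, \e^{-2\pi \im (mx+ny)}\, \dif x\, \dif y
    = \hat{f}(m,n),
\end{equation*}
so both parts of the lemma reduce to statements about the ordinary Fourier coefficients of functions in $L^2_w(Q)$.

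For part $(a)$, since $Q$ has finite measure and $q/(q-2) > 1$, the hypothesis gives $1/w \in L^1(Q)$, so $E$ is exact in $L^2_w(Q)$ and its dual system is given by \eqref{e:bi-orth}. To invoke the implication $(b) \Rightarrow (a)$ of Theorem \ref{t:characterisation for exact c_q}, it suffices to establish
\begin{equation*}
    \Big(\sum_{m,n \in \Z}|\hat{f}(m,n)|^q\Big)^{1/q} \leq C \|f\|_{L^2_w(Q)},
    \qquad \forall f \in L^2_w(Q).
\end{equation*}
The Hausdorff--Young inequality on $Q$ yields $\|\hat{f}\|_{\ell^q} \leq \|f\|_{L^p(Q)}$ with $p = q/(q-1) \in (1,2)$. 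A Hölder estimate with conjugate exponents $2/p$ and $2/(2-p)$, applied to the factorisation $|f|^p = (|f|^p w^{p/2}) \cdot w^{-p/2}$, then gives
\begin{equation*}
    \|f\|_{L^p(Q)}
    \leq \|f\|_{L^2_w(Q)} \cdot \|1/w\|_{L^{q/(q-2)}(Q)}^{1/2},
\end{equation*}
where I use the arithmetic identity $p/(2-p) = q/(q-2)$. Chaining the two estimates proves the desired bound.

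For part $(b)$, I argue by contrapositive. If $E$ were an exact $(C_q)$-system in $L^2_w(Q)$, exactness would again force $1/w \in L^1(Q)$, so the dual system is given by \eqref{e:bi-orth}. Applying the implication $(a) \Rightarrow (b)$ of Theorem \ref{t:characterisation for exact c_q} with $f = g$, together with the identity above (which is valid since $g \in L^1(Q)$), would yield
\begin{equation*}
    \Big(\sum_{m,n \in \Z} |\hat{g}(m,n)|^q\Big)^{1/q}
    \leq C \|g\|_{L^2_w(Q)} < \infty,
\end{equation*}
contradicting the hypothesis.

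The main technical ingredient is the Hölder estimate linking $L^p(Q)$ and $L^2_w(Q)$; once combined with Hausdorff--Young it reduces everything to Theorem \ref{t:characterisation for exact c_q}. The conceptual point is that in the weighted setting the dual system has the simple explicit form \eqref{e:bi-orth}, which converts weighted inner products into unweighted Fourier coefficients and thereby makes both directions of the lemma amenable to classical Fourier-analytic tools.
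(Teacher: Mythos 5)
Your proof is correct and follows essentially the same approach as the paper: identify the dual system as \eqref{e:bi-orth}, observe that $\langle f, h_{m,n}\rangle_{L^2_w(Q)} = \hat{f}(m,n)$, and chain the Hausdorff--Young inequality with the H\"older estimate $\|f\|_{L^p(Q)} \leq \|f\|_{L^2_w(Q)}\,\|1/w\|_{L^{q/(q-2)}(Q)}^{1/2}$ to invoke Theorem \ref{t:characterisation for exact c_q}. The paper dispatches part $(b)$ with the remark that a similar argument works, and your contrapositive via $(a)\Rightarrow(b)$ of Theorem \ref{t:characterisation for exact c_q} is exactly the intended one.
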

\begin{proof}
    To see $(a)$, we use condition $(b)$ of Theorem \ref{t:characterisation for exact c_q}.  
    Indeed, we first note that  if $1/w \in L^{\frac{q}{q-2}}(Q)$, then $1/w \in
    L^1(Q)$. Therefore,  $E$ is exact in the space and the dual system is given by \eqref{e:bi-orth}.
    For $g\in L^2_w(Q)$, we evaluate
    \begin{align*}
        \sum_{m,n \in \Z} |\langle g, h_{m,n}\rangle_{L^2_w(Q)}|^q
         &=
        \sum_{m,n \in \Z} \Big|\iint_Q g \; \frac{1}{w} \, \e^{-2\pi \im (mx + ny)}w \; \dif x \dif y \Big|^q
        =
        \sum_{m,n \in \Z} |\hat{g}(m,n)|^q.
    \end{align*}
    By the Hausdorff--Young inequality, the last expression is smaller
    than $\|g\|_{L^p(Q)}^q$, where ${1}/{p}+{1}/{q}=1$. We
    can now use H{\" o}lder's inequality to check that
    \begin{equation*}
        \|g\|_{L^p(Q)}=\Big\|g\sqrt{w}\cdot \frac{1}{\sqrt{w}} \Big\|_{L^p(Q)}
        \leq
        \|g\|_{L^2_w(Q)} \Big\|\frac{1}{w}\Big\|^{\frac{1}{2}}_{L^{\frac{q}{q-2}}(Q)},
    \end{equation*}
    and $(a)$ follows.

    A similar argument can be used to prove $(b)$.
\end{proof}

\subsection{The Zak transform and Gabor $(C_q)$-systems}

The following definition is commonly used in the study of lattice
Gabor-systems (see, for example, \cite{heil07}).
\begin{definition}
\label{d:zac transform}Let $g \in L^2(\R)$. The Zak transform of
$g$ is given by 
\begin{equation*}
     Zg (x,y)= \sum_{k\in \Z} g(x-k) \e^{2\pi i k y}, \quad \quad \forall (x,y)\in \R^2.
\end{equation*}
\end{definition}
One can easily verify that, for every $g\in L^2(\R)$, the function
$Zg$ is {quasi-periodic} on $\R^2$. That is, for every
$(x,y)\in \R^2$, it satisfies
\begin{equation}
Zg(x,y+1)=Zg(x,y) \quad \text{and} \quad Zg(x+1,y)=e^{2\pi \im
y}Zg(x,y).
\end{equation}
This implies that $Zg$ is determined uniquely by its values on
$Q$. It is well-known that when restricted to $Q$, the Zak
transform induces   a unitary operator from $L^2(\R)$ onto $L^2(Q)$. In
particular, this means that any quasi-periodic function, for which the
restriction to $Q$ is square integrable, is the image under the
Zak transform of some function $g\in L^2(\R)$. Throughout this
paper $Zg$ denotes either the Zak transform of $g$ or its
restriction to $Q$. The use of this notation will be clear from
the context.

We now explain how   the weighted spaces $L^2_{\abs{Zg}^2}(Q)$ can be used to study Gabor systems $G(g,1,1)$.
First note that 
\begin{equation}
\label{zak of gg11}
    Z\left\{g(t-n)\e^{2\pi i m t} \right\}(x,y) = \e^{2\pi \im (mx - ny)}
    Zg(x,y).
\end{equation}
Therefore, the system $G(g,1,1)$ is complete in $L^2(\R)$ if
and only if $Zg \neq 0$ almost everywhere. Next,
let $g\in L^2(\R)$ be such that $Zg \neq 0$ almost everywhere and
denote by $U_g : L^2(\R) \rightarrow L^2_{|{Zg}|^2}(Q)$ the
operator
\begin{equation*}
     U_g : h \longmapsto \frac{Zh}{Zg}.
\end{equation*}
It is clear that $U_g$ is a unitary bijection, and it follows from
\eqref{zak of gg11} that the image of the system $G(g,1,1)$ under
this operator is the system $E$ defined in \eqref{E}. Hence,
$G(g,1,1)$ is an exact system, a $(C_q)$-system, or a frame in
$L^2(\R)$ if and only if the same can be said about the system $E$
in $L^2_{|{Zg}|^2}(Q)$.

The following reformulation of Lemma \ref{l:basic lemma, weight
version} is now immediate.
\begin{lemma}
\label{l:basic lemma, function version}
    Fix $q>2$ and let $g \in L^2(\R)$ satisfy $Z g\neq 0$ almost everywhere.
    \begin{enumerate}[label=$(\alph*)$]
        \item If ${1}/{|Z g|^2} \in L^{\frac{q}{q-2}}(Q)$, then
        $G(g,1,1)$ is an exact $(C_q)$-system in $L^2(\R)$.
        \item If  there exists a
        function $f\in L^2_{|Z g|^2}(Q)\cap L^1(Q)$ with
        \begin{equation*}
            \sum_{m,n \in \Z}|\hat{f}(m,n)|^q=\infty,
        \end{equation*}
        then $G(g,1,1)$ is not an exact $(C_q)$-system in $L^2(\R)$.
    \end{enumerate}
\end{lemma}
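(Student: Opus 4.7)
The plan is to transfer both statements via the unitary operator $U_g$ introduced just before the lemma. Since $Zg \neq 0$ almost everywhere, the map $U_g : h \mapsto Zh/Zg$ is a well-defined isometric bijection from $L^2(\R)$ onto $L^2_{|Zg|^2}(Q)$: the isometry is immediate from $Z$ being a unitary map of $L^2(\R)$ onto $L^2(Q)$, since
$$\|h\|_{L^2(\R)}^2 = \|Zh\|_{L^2(Q)}^2 = \iint_Q \Big|\frac{Zh}{Zg}\Big|^2 |Zg|^2 \, \dif x \dif y = \|U_g h\|_{L^2_{|Zg|^2}(Q)}^2.$$
By the intertwining identity \eqref{zak of gg11}, $U_g$ sends the Gabor system $G(g,1,1)$ to the exponential system $E$ defined in \eqref{E} (up to the harmless relabelling $n \mapsto -n$).

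Next, I would observe that being an exact $(C_q)$-system is manifestly invariant under unitary equivalence. Completeness and exactness transfer tautologically through a bijective isometry, while the $(C_q)$ estimate involves only norms of elements and their coefficients in the expansion, both of which are preserved. Alternatively, one may appeal to the dual-system formulation in Theorem \ref{t:characterisation for exact c_q}: because $U_g$ intertwines inner products, the dual of $G(g,1,1)$ in $L^2(\R)$ is mapped to the dual of $E$ in $L^2_{|Zg|^2}(Q)$, so condition (b) of that theorem is transported across. Hence $G(g,1,1)$ is an exact $(C_q)$-system in $L^2(\R)$ if and only if $E$ is an exact $(C_q)$-system in $L^2_{|Zg|^2}(Q)$.

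With this equivalence in hand, the proof reduces to applying Lemma \ref{l:basic lemma, weight version} with the choice $w = |Zg|^2$. The hypotheses of that lemma are satisfied: $w \in L^1(Q)$ because $\iint_Q |Zg|^2 = \|g\|^2 < \infty$, and $w > 0$ almost everywhere by assumption. Part (a) of the current lemma follows at once from part (a) of the weighted version under the stated integrability of $1/|Zg|^2$, and part (b) from part (b), since the test function $f \in L^2_{|Zg|^2}(Q) \cap L^1(Q)$ with non-summable ($\ell^q$) Fourier coefficients is exactly what the hypothesis supplies. There is no real obstacle here; the only conceptual point is to verify that unitary equivalence preserves the $(C_q)$-system property, which is why the paper justifiably labels the statement as immediate.
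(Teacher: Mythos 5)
Your proof is correct and follows exactly the route the paper intends: pass through the unitary $U_g$ to reduce to the weighted version, Lemma~\ref{l:basic lemma, weight version}, with $w = |Zg|^2$. The paper itself states this reformulation is ``immediate'' after establishing the $U_g$-equivalence; you have simply supplied the details (unitary invariance of the exact $(C_q)$ property, the relabelling $n \mapsto -n$, and the check that $w \in L^1(Q)$), all of which are correct.
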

\begin{remark} \label{bessel remark}
Similarly, one can show that a system $G(g,1,1)$ is a Bessel
system if and only if the weight $|Zg|^2$ is bounded from
above almost everywhere.  As follows
from Lemma 6 below, this condition holds for all cases we
discuss in the context of Theorem \ref{main result}, i.e., whenever the point $(1/r, 1/s)$ is below the curve
$BC=\Gamma_2$ (see \eqref{gamma q} and Figure \ref{figure}).
\end{remark}

\section{Smoothness properties of the zak transform} \label{smoothness section}

In this section, we study the connection between the time--frequency localisation
of a function and the regularity of its Zak transform. This is done
both in terms of certain integral estimates as well as pointwise Lipschitz type estimates.
\subsection{Smoothness and the Fourier transform}
For $h\in \R$ and $k \in \N$, the operator $\tau_h^k : L^2(\R)
\rightarrow L^2(\R)$ is defined by
\begin{equation*}
     \tau_h g (t) = g(t+h) - g(t) \quad \text{and} \quad \tau_h^k g = \tau_h \tau_h^{k-1} g.
\end{equation*}
We use the convention $\tau_h^0 g = g$.
Since $\widehat{\tau_h g}(\xi) = (\e^{2\pi \im \xi
h}-1)\widehat{g}(\xi)$, it follows by induction that
\begin{equation} \label{part2_transform of the tau operator}
     \widehat{\tau_h^k g}(\xi) = (\e^{2\pi \im \xi h} - 1)^k \widehat{g}(\xi).
\end{equation}
One can now easily deduce the following classical relation (see
\cite[p. 139--140]{stein70} for the cases $k=1,2$), which connects
the smoothness of a function in $L^2(\R)$ to the decay of its
Fourier transform:
For $0 < r < 2k$,  there exists a constant $C>0$ such that
\begin{equation} \label{part2_steins equality}
     \iint_{\R^2} \frac{\abs{\tau_h^k g(t)}^2}{\abs{h}^{1+r}} \dif t \dif h
     =
     C \int_\R \abs{\xi}^r \abs{\hat{g}(\xi)}^2 \dif \xi.
\end{equation}
Indeed, by Parseval's identity and the equation \eqref{part2_transform
of the tau operator},
\begin{equation*}
     \int_\R \abs{\tau_h^k g(t)}^2 \dif t
     =
     \int_\R \abs{\widehat{\tau_h^kg} (\xi)}^2 \dif \xi
     =
     \int_\R \abs{\e^{2\pi \im \xi h} - 1}^{2k} \abs{\widehat{g}(\xi)}^2 \dif \xi.
\end{equation*}
Whence, by a an appropriate change of variables,
\begin{equation*}
     \iint_{\R^2} \frac{\abs{\tau_h^kg(t)}^2}{\abs{h}^{1+r}} \dif t \dif h
     =
     \int_\R \frac{\abs{\e^{2\pi \im h } - 1}^{2k}}{\abs{h}^{1+r}} \dif h
     \cdot
     \int_\R \abs{\xi}^r \abs{\hat{g}(\xi)}^2 \dif \xi.
\end{equation*}
The following two
lemmas list some basic properties of the operator $\tau_h$ which
are used in later sections. 
\begin{lemma} \label{part2_basic basic lemma}
    For any functions $f$ and $g$ on $\R$, the following relations hold.
        \begin{align*}
             (a)&\quad \abs{\tau_h^k g(t)} \leq 2^k \sum_{j=0}^k \abs{g(t+jh)}.\\
           (b)&\quad \tau_h^k (fg) (t) = \sum_{j=0}^k \binom{k}{j} \tau_h^j f(t) \tau_h^{k-j} g(t + jh).
        \end{align*}
        Moreover, if $h\geq 0$ and $g \in C^k[t,t+kh]$, then
        \begin{equation*}
            \hspace{-1.9cm} (c)\quad \abs{\tau_h^k g(t)} \leq \abs{h}^k \sup_{\xi \in [t,t+kh]} \abs{g^{(k)}(\xi)}.
        \end{equation*}
     For $h<0$, the same estimate holds over the interval $[t+kh,t]$.
\end{lemma}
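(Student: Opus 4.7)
Each of the three items can be proved by a short induction on $k$. For (a), I would first derive the closed-form expansion
\[
\tau_h^k g(t) = \sum_{j=0}^k (-1)^{k-j} \binom{k}{j} g(t+jh),
\]
by induction using $\tau_h^k g = \tau_h(\tau_h^{k-1}g)$ together with Pascal's identity $\binom{k-1}{j}+\binom{k-1}{j-1}=\binom{k}{j}$. The triangle inequality and the elementary bound $\binom{k}{j}\leq 2^k$ then yield (a) at once.

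For (b), the discrete Leibniz rule, I would proceed again by induction on $k$. The base case $k=1$ is the direct computation
\[
\tau_h(fg)(t) = (f(t+h)-f(t))\,g(t+h) + f(t)\,(g(t+h)-g(t)),
\]
which matches the stated right-hand side with the coefficients $\binom{1}{0}$ and $\binom{1}{1}$. For the inductive step, I would apply $\tau_h$ to each term in the expansion at level $k-1$, invoke the $k=1$ identity on every resulting product, and reindex the sum so that the two resulting contributions combine via Pascal's identity into the binomial coefficients $\binom{k}{j}$.

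For (c), the base case $k=1$ is the mean value theorem: $\tau_h g(t) = h\,g'(\eta)$ for some $\eta$ between $t$ and $t+h$. For the step $k-1 \mapsto k$, I would set $f:=\tau_h g$ and observe that if $g\in C^k[t,t+kh]$, then $f\in C^{k-1}[t,t+(k-1)h]$ with
\[
f^{(k-1)}(\xi) = g^{(k-1)}(\xi+h)-g^{(k-1)}(\xi) = h\,g^{(k)}(\eta)
\]
for some $\eta \in [\xi,\xi+h] \subseteq [t,t+kh]$, again by MVT. The inductive hypothesis applied to $f$ then gives
\[
|\tau_h^k g(t)| = |\tau_h^{k-1}f(t)| \leq |h|^{k-1} \sup_{\xi\in[t,t+(k-1)h]}|f^{(k-1)}(\xi)| \leq |h|^k \sup_{[t,t+kh]}|g^{(k)}|.
\]
The case $h<0$ is handled by the same argument with the intervals reversed. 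The content is entirely routine; the only mild pitfall is the index bookkeeping in (b), where the shifted arguments $t+jh$ must be tracked carefully so that Pascal's identity can be invoked cleanly.
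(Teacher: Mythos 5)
Your proof is correct and follows exactly the route the paper intends: the authors explicitly state that the lemma "can be proved easily using an inductive process and the mean value theorem" and leave the details to the reader, and your inductions for (a), (b), (c) supply precisely those details.
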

This lemma can be proved easily using an inductive process and the mean value theorem (for estimate $(c)$). We leave the details
to the reader.%

\begin{lemma} \label{bonus lemma}
Fix $k \in \N$ and $0<r<2k$.
Suppose $U \subset \R$ and let $g$ be a function on $\R$.
\begin{enumerate}[label=$(\alph*)$]
\item If $\int_{U}\abs{g(t+\eta)}^2 \dif t$ is bounded uniformly for
all $\eta\in \R$, then
\begin{gather*}
     \int_\R \int_{U} \frac{\abs{\tau_h^k g(t)}^2}{h^{1+r}} \dif t  \dif h < \infty
    \; \; \iff \; \;
    \int_{-1}^1 \int_{U}   \frac{\abs{\tau_h^k  g(t)}^2}{h^{1+r}} \dif t \dif h < \infty.
\end{gather*}

\item Suppose that $U$ is bounded. If g is locally square integrable and $\phi \in C^k(\R)$, then
\begin{gather*}
    \int_{-1}^1 \int_{U} \frac{\abs{\tau_h^k g(t)}^2}{h^{1+r}} \dif t\dif h < \infty
     \; \implies \;
    \int_{-1}^1 \int_{U}   \frac{\abs{\tau_h^k (\phi g)(t)}^2}{h^{1+r}} \dif t \dif h < \infty.
\end{gather*}
\end{enumerate}
\end{lemma}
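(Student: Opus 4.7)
For part $(a)$, the direction from the full integral to the local one is trivial. For the converse, I would split $\int_{\R} dh=\int_{|h|\le 1}+\int_{|h|>1}$; the first piece is the local integral by definition. For the tail, Lemma~\ref{part2_basic basic lemma}$(a)$ yields $|\tau_h^k g(t)|^2\le C\sum_{j=0}^k|g(t+jh)|^2$, and the uniform boundedness hypothesis gives $\int_U|\tau_h^k g(t)|^2\,dt\le C'$ independently of $h$. Since $r>0$, $\int_{|h|>1}|h|^{-(1+r)}\,dh<\infty$, so the tail is controlled.

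For part $(b)$, I would expand via the Leibniz identity of Lemma~\ref{part2_basic basic lemma}$(b)$,
\[
\tau_h^k(\phi g)(t)=\sum_{j=0}^k\binom{k}{j}\,\tau_h^j\phi(t)\,\tau_h^{k-j}g(t+jh),
\]
and treat each term separately. The $j=0$ term contributes $\|\phi\|_{L^\infty(V)}^2$ times the hypothesis, where $V\supset U+[-k,k]$ is any bounded neighborhood. For $1\le j\le k$, Lemma~\ref{part2_basic basic lemma}$(c)$ gives $|\tau_h^j\phi(t)|\le C|h|^j$ uniformly for $t\in U$, $|h|\le 1$, reducing the task to showing
\[
I_j:=\int_{-1}^1\int_U\frac{|\tau_h^{k-j}g(t+jh)|^2}{|h|^{1+r-2j}}\,dt\,dh<\infty,\qquad 1\le j\le k.
\]
When $2j>r$, the trivial bound $|\tau_h^{k-j}g(s)|^2\le C\sum_i|g(s+ih)|^2$ from Lemma~\ref{part2_basic basic lemma}$(a)$, the local $L^2$-integrability of $g$, and $\int_{-1}^1|h|^{2j-1-r}\,dh<\infty$ combine to give $I_j<\infty$.

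The hard case is the remaining range $1\le j\le r/2$, non-empty only when $r\ge 2$, where the trivial bound leaves an unintegrable factor $|h|^{2j-1-r}$ and extra $|h|$-decay must be extracted from the regularity of $g$ encoded in the hypothesis. My plan is to reduce to $g\in L^2(\R)$ via a smooth cutoff $\psi\in C_c^\infty(\R)$ equal to $1$ on $U+[-k,k]$: setting $\tilde g:=\psi g\in L^2(\R)$, one has $\tau_h^k g(t)=\tau_h^k\tilde g(t)$ and $\tau_h^k(\phi g)(t)=\tau_h^k(\phi\tilde g)(t)$ for $t\in U$, $|h|\le 1$, so both sides of the implication are unchanged by the substitution $g\mapsto\tilde g$. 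Applying part $(a)$ to $\tilde g$ and then using its compact support to extend the $t$-integration to $\R$, one obtains $\iint_{\R^2}|\tau_h^k\tilde g|^2/|h|^{1+r}\,dt\,dh<\infty$; by the Fourier identity \eqref{part2_steins equality} this means $\tilde g\in H^{r/2}(\R)$, which yields the Sobolev-type estimate $\|\tau_h^m\tilde g\|_{L^2(\R)}^2\le C|h|^{\min(2m,r)}$ for every $m\ge 1$ and $|h|\le 1$. Taking $m=k-j$ and substituting into $I_j$ produces an $h$-exponent $\max(2k-1-r,\,2j-1)>-1$, giving convergence in both subcases $2(k-j)\le r$ and $2(k-j)>r$.

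The most likely obstacle is the extension of the $t$-integration from $U$ to $\R$ in the above step: since the support of $\tau_h^k\tilde g$ may strictly contain $U$, control over the contribution from $t\in\R\setminus U$ requires a careful choice of the cutoff $\psi$ and may necessitate absorbing boundary errors coming from derivatives of $\psi$, each of size $|h|^m$ with $m\ge 1$, paired against shifted values of $g$. This boundary analysis, combined with the Sobolev-type decay of $\|\tau_h^m\tilde g\|_{L^2(\R)}$ above, is the technical heart of the argument; once it is in place, the Leibniz expansion and the two-case estimate on $I_j$ close the proof.
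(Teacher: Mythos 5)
Part $(a)$ of your proof coincides with the paper's. For part $(b)$ you take a genuinely different route: a single Leibniz expansion of $\tau_h^k(\phi g)$, a smooth cutoff reducing to $\tilde g=\psi g\in L^2(\R)$, and then a Fourier-side argument via \eqref{part2_steins equality} together with a Sobolev-type decay estimate for $\|\tau_h^m\tilde g\|_{L^2(\R)}$. The paper instead iterates the one-step product rule $\tau_h(uv)=\tau_h u\cdot v+u_h\cdot\tau_h v$ and inducts on $k$, staying on the physical side and never invoking any global Sobolev regularity.

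The step you flag at the end as the ``technical heart'' is in fact a genuine error, not a closable boundary issue. After choosing $\psi\equiv 1$ on $U+[-k,k]$, it is in general \emph{false} that $\iint_{\R^2}|\tau_h^k\tilde g|^2/|h|^{1+r}\,dt\,dh<\infty$, i.e.\ that $\tilde g\in H^{r/2}(\R)$. Take $k=2$, $r=3$, $U=(0,1)$, and let $g$ be compactly supported, identically $0$ on $(-\infty,1)$, smooth away from $t=2$, and equal to $|t-2|^{-1/4}$ near $t=2$; then $g$ is locally square integrable. The hypothesis $\int_{-1}^1\int_0^1|\tau_h^2 g|^2/|h|^4\,dt\,dh<\infty$ holds, because for $|h|<1/4$ the three sampled points $t,t+h,t+2h$ lie in $(-\infty,3/2)$ where $g$ is smooth with bounded derivatives, so $|\tau_h^2 g(t)|\le C h^2$ uniformly on $(0,1)$, while for $|h|\ge 1/4$ the weight $|h|^{-4}$ is bounded and $\int_0^1|\tau_h^2 g|^2\,dt$ is uniformly bounded. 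Yet $\tilde g=\psi g$ with $\psi\equiv 1$ on $U+[-2,2]\supset\{2\}$ still carries the $|t-2|^{-1/4}$ singularity, so $\widehat{\tilde g}(\xi)\sim|\xi|^{-3/4}$ and $\int_\R|\xi|^3|\widehat{\tilde g}|^2\,d\xi=\infty$. The point is that the hypothesis only controls the $k$-th difference of $g$ as averaged over $t\in U$, and that average is blind to singularities of $g$ near but outside $U$ which are ``reached'' by $\tau_h^k g$ only at $|h|$ bounded away from zero. Since your control of $I_j$ for $1\le j\le r/2$ rests entirely on the decay $\|\tau_h^m\tilde g\|^2_{L^2(\R)}\le C|h|^{\min(2m,r)}$, which is equivalent to $\tilde g\in H^{r/2}(\R)$, your argument cannot be closed along these lines. (A small unrelated slip: the exponent in your final line should be $\min(2k-1-r,\,2j-1)$, not $\max$; both exceed $-1$, so the stated conclusion there is unaffected.) The paper's iterated Leibniz induction avoids this entirely: every term produced is either $\tau_h^k\phi\cdot g$, controlled by $|\tau_h^k\phi|\le C|h|^k$ and $r<2k$, or of the form $\tau_h^{k-j}(\phi_{jh}\cdot\tau_h^j g)$, handled by the induction hypothesis with $\tau_h^j g$ in place of $g$, so the $t$-integration never leaves $U$ and no information about $g$ beyond the reach of the hypothesis is required.
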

\begin{proof}
Throughout this proof we use the notation $g(t+\eta) =
g_\eta(t)$.

$(a):$ As follows from Lemma
\ref{part2_basic basic lemma}$(a)$, if $g$ satisfies the conditions
above then
\begin{equation*}
    \int_{\abs{h}>1} \int_{U}  \frac{\abs{\tau_h^k g(t)}^2}{h^{1+r}} \dif t  \dif h < \infty.
\end{equation*}
The conclusion follows.

$(b):$ We prove this
by induction on $k \in \N$. For $k=1$, it is straight-forward since
by  Lemma \ref{part2_basic basic lemma}$(b)$ we have
\begin{equation}\label{shahaf equation} 
    \tau_h (u v) = \tau_h u \cdot v + u_h \cdot \tau_h v.
\end{equation}
Indeed, this identity applied to $u= \phi$ and $v= g$ yields the inequality
\begin{multline*}
    \frac{1}{2}\int_{-1}^1 \int_{U}    \frac{\abs{\tau_h (\phi g)(t)}^2}{h^{1+r}} \dif t\dif h
    \leq
    \int_{-1}^1 \int_{U}   \frac{\abs{(\tau_h \phi \cdot g)(t)}^2}{h^{1+r}} \dif t \dif h
    \\+
    \int_{-1}^1 \int_{U}   \frac{\abs{(\phi_h \cdot \tau_hg)(t)}^2}{h^{1+r}} \dif t \dif h.
\end{multline*}
The second term on the
right-hand side is finite since $\phi$ is   bounded on any
compact set. To see that the first term is finite, apply 
Lemma \ref{part2_basic basic lemma}$(c)$ to $\phi$, and conclude
using the facts that $g$ is locally square integrable and $r < 2$.

Next, assume that $(b)$
holds for $k < n$ and that
\begin{equation}
\label{cond in induction} \int_{-1}^1 \int_{U}
\frac{\abs{\tau_h^n g(t)}^2}{h^{1+r}} \dif t\dif h < \infty.
\end{equation}

By \eqref{shahaf equation}, in the same way as above, we have  
\begin{equation*}
    \tau_h^n (\phi g) = \tau_h^{n-1} \tau_h (\phi \cdot g) = \underbrace{\tau_h^{n-1} (\tau_h \phi \cdot g)}_{A_1} + \underbrace{\tau_h^{n-1}(\phi_h \cdot \tau_h g)}_{B_1}.
\end{equation*}
By the induction hypothesis,
$B_1$ gives rise to a finite term in the corresponding integral.
Indeed, note that $\tau_h^n g=\tau_h^{n-1} \tau_h g$ and apply the induction
hypothesis to \eqref{cond in induction} with $k=n-1$. On the other
hand, again by \eqref{shahaf equation}, now applied with $u=\tau_h \phi$ and $v=g$, we have 
\begin{equation*}
    A_1 = \tau_h^{n-2} \tau_h (\tau_h \phi \cdot g) =  \underbrace{ \tau_h^{n-2} (\tau_h^2 \phi \cdot g) }_{A_2} + \underbrace{\tau_h^{n-2} (\tau_h \phi_h \cdot \tau_h g )}_{B_2}.
\end{equation*}
We apply a version of the relation \eqref{shahaf
equation} (replace $v$ by $\tau_h v$): 
\begin{equation} \label{repeated use guy}
    \tau_h u \cdot \tau_h v =  \tau_h (u \cdot \tau_h v) -  u_h \cdot \tau_h^2 v,
\end{equation}
and find that
\begin{equation*}
    B_2 
    =  \tau_h^{n-1}( \phi_{h} \cdot \tau_h g)  - \tau_h^{n-2} (\phi_{2h} \cdot \tau_h^2 g) .
\end{equation*}
As above, $B_2$ gives rise to
finite terms in the corresponding integral expressions. Indeed, apply the induction hypothesis to \eqref{cond in induction}
with $k=n-1$ and $k=n-2$ (use $\tau_h g$ and $\tau_h^2 g$ in place
of $g$, respectively).

We iterate this process for $m\leq n$, applying \eqref{repeated use guy} repeatedly in each step, to get:
\begin{align*}
    A_m   =   \tau_h^{n-m} (\tau_h^m \phi \cdot g ) \qquad \text{and} \qquad  B_m   &=  \tau_h^{n-m} (\tau_h^{m-1} \phi_h \cdot \tau_h g) \\
    &=  \sum_{j=1}^{m}(-1)^{j-1} \binom{m-1}{j-1}  \tau_h^{n-j} (\phi_{jh} \cdot \tau_h^{j} g).
\end{align*}
In each step, due to the induction
hypothesis, the terms generated by the $B_m$ yield corresponding
finite integrals. The final term $A_n = \tau_h^n \phi \cdot g$
gives a convergent integral by Lemma \ref{part2_basic basic lemma}$(c)$.
\end{proof}

\subsection{Smoothness and the Zak transform} \label{smoothness and zak subsection}
For functions on $\R^2$, we define the operators $\Delta_h^k$ and
$\Gamma_h^k$, as analogues to the operator $\tau_h^k$, by
\begin{align*}
        \Delta_hF(x,y) &:= F(x+h,y) - F(x,y),  \\
        \Gamma_hF(x,y) &:= F(x,y+h) - F(x,y),
\end{align*}
and the relations
\begin{equation*}
     \Delta_h^kF = \Delta_h \Delta_h^{k-1} F \quad \text{and} \quad \Gamma_h^kF = \Gamma_h \Gamma_h^{k-1} F.
\end{equation*}
As above, we use the convention $\Delta_h^0 F = \Gamma_h^0F = F$.
By setting $F_y(x) = F(x,y)$, we can write $\Delta_h^k F(x,y) = \tau_h^k F_y(x)$, and similarly for $\Gamma_h^k$. This allows us
to carry over results on $\tau_h^k$ to the operators $\Delta_h^k$ and $\Gamma_h^k$.

 For the operator $\Delta_h^k$, the identity in
\eqref{part2_steins equality} takes the following form:
For $0 < r < 2k$,
there exists a constant $C>0$ such that for every $F\in L^2(\R^2)$
we have
\begin{equation} \label{part2_steins equality for 2 variebals}
     \iiint_{\R^3} \frac{\abs{\Delta_h^k F(x,y)}^2}{\abs{h}^{1+r}} \dif x \dif y \dif h
     =
     C \iint_{\R^2} \abs{u}^r \abs{\hat{F}(u, v)}^2 \dif u \dif v.
\end{equation}
A similar identity holds for the operator $\Gamma_h^k$.
\begin{remark} \label{part2_delta and gamma remark} 
    	Lemma \ref{part2_basic basic lemma} remains true in the two variable case when $\tau_h$ is replaced by either
    	$\Delta_h$ or $\Gamma_h$, and the appropriate modifications are made.
	In what follows, these properties will be referred to as
	remarks \ref{part2_delta and gamma remark}$(a)$, \ref{part2_delta
	and gamma remark}$(b)$ and \ref{part2_delta and gamma
	remark}$(c)$, respectively.
\end{remark}
\begin{remark} \label{bonus remark}
	The properties listed in Lemma \ref{bonus lemma} also hold,
	under appropriate modifications, for the operators $\Delta_h$ and
	$\Gamma_h$. 
    In what follows, these properties will be referred to as
remarks \ref{bonus remark}$(a)$ and \ref{bonus remark}$(b)$, respectively.
\end{remark}
A connection between  the time--frequency localisation of a function $g$ and the smoothness of its Zak transform is now given in the following lemma.
We note that the implications $(i)\Rightarrow(iii)$ and $(iv)\Rightarrow(vi)$, were first proved in \cite{gautam08}.
\begin{lemma} \label{part2_stein lemma for delta and gamma}Let $k \in \N$ and $0< r,s <
2k$. For every $g \in L^2(\R)$ we have: 
\begin{enumerate}[label=$(\alph*)$]
\item The following conditions are equivalent.
    \begin{align*}
        (i) & \int_\R \abs{\xi}^r \abs{\hat{g}(\xi)}^2 \dif \xi<\infty. \\
        (ii) & \int_\R \iint_{[0,1]^2} \frac{\abs{\Delta_h^k Zg(x,y)}^2}{\abs{h}^{1 + r}} \dif x \dif y \dif h<\infty. \\
        (iii) & \; \; \textrm{For every compactly supported function } \psi \in
        C^{k}(\R) \\
        &\iint_{\R^2} \abs{u}^r \abs{\widehat{\psi Zg}(u, v)}^2 \dif u \dif
        v<\infty.
    \end{align*}
\item Similarly, the following conditions are equivalent.
\begin{align*}
	(iv)&\int_\R \abs{t}^s \abs{g(t)}^2 \dif t  < \infty.\\
 	(v) & \int_\R \iint_{[0,1]^2} \frac{\abs{\Gamma_h^k Zg(x,y)}^2}{\abs{h}^{1+s}} \dif x \dif y \dif h < \infty.\\ 
	(vi) & \;\; \textrm{For every compactly supported function } \psi \in
        C^{k}(\R) \\
        &\iint_{\R^2} \abs{v}^s \abs{\widehat{\psi Zg}(u, v)}^2 \dif u \dif
        v<\infty.
 \end{align*}
\end{enumerate}
\end{lemma}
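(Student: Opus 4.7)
The plan is to prove part (a) in two steps, (i)$\Leftrightarrow$(ii) and (ii)$\Leftrightarrow$(iii), and then to obtain part (b) by analogous arguments applied in the $y$-variable. The essential inputs are the unitarity of $Z\colon L^2(\R)\to L^2(Q)$, the quasi-periodicity relations for $Zg$, and the identities \eqref{part2_steins equality} and \eqref{part2_steins equality for 2 variebals}.

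For (i)$\Leftrightarrow$(ii), I would first observe that translation in $x$ passes through the series defining $Zg$, yielding $\Delta_h^k Zg = Z(\tau_h^k g)$. Unitarity of $Z$ then gives $\iint_Q\abs{\Delta_h^k Zg}^2\dif x\dif y = \int_\R\abs{\tau_h^k g}^2\dif t$, and dividing by $\abs{h}^{1+r}$ and integrating in $h$ identifies the expression in (ii) with the left-hand side of \eqref{part2_steins equality}; that identity then delivers the equivalence with (i).

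For (iv)$\Leftrightarrow$(v), I would apply Parseval in the $y$-variable: since the Fourier coefficients of $Zg(x,\cdot)$ are $\{g(x-n)\}_{n\in\Z}$, those of $\Gamma_h^k Zg(x,\cdot)$ are $\{(\e^{2\pi\im nh}-1)^k g(x-n)\}_{n\in\Z}$. Integrating in $x\in[0,1]$, dividing by $\abs{h}^{1+s}$, integrating in $h$, and rescaling $h\mapsto h/n$ should produce
\begin{equation*}
	\int_\R\iint_Q\frac{\abs{\Gamma_h^k Zg}^2}{\abs{h}^{1+s}}\dif x\dif y\dif h = C\sum_{n\in\Z}\abs{n}^s\int_{-n}^{1-n}\abs{g(t)}^2\dif t,
\end{equation*}
with $C=\int_\R\abs{\e^{2\pi\im h}-1}^{2k}\abs{h}^{-1-s}\dif h$ finite because $0<s<2k$. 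Since $\abs{n}$ is comparable to $\abs{t}$ on $[-n,1-n]$ for large $\abs{n}$, the right-hand side is finite precisely when $\int_\R\abs{t}^s\abs{g(t)}^2\dif t<\infty$.

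For (ii)$\Leftrightarrow$(iii), with (v)$\Leftrightarrow$(vi) following identically after exchanging $\Delta_h\leftrightarrow\Gamma_h$ and $u\leftrightarrow v$, I would invoke \eqref{part2_steins equality for 2 variebals} to recast (iii) as the finiteness of $\iiint_{\R^3}\abs{h}^{-1-r}\abs{\Delta_h^k(\psi Zg)}^2\dif x\dif y\dif h$ for every compactly supported $\psi\in C^k$. For (iii)$\Rightarrow$(ii), I would choose $\psi$ identically $1$ on a neighborhood of $[0,1]^2$ large enough that $\Delta_h^k(\psi Zg)=\Delta_h^k Zg$ on $[0,1]^2$ whenever $\abs{h}\leq\eta$; this bounds the small-$h$ portion of (ii) by (iii), while the large-$h$ portion is harmless because $\abs{Zg}$ is $\Z^2$-periodic (a direct consequence of the quasi-periodicity relations), giving $\iint_Q\abs{Zg(x+jh,y)}^2\dif x\dif y=\norm{g}^2_{L^2}$. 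For (ii)$\Rightarrow$(iii), the same periodicity tiles the support of $\psi$ so that the small-$h$ integral of $\abs{\Delta_h^k Zg}^2$ over that support is controlled by (ii); Remark \ref{bonus remark}(b) then transfers this finiteness to $\psi Zg$, and Remark \ref{bonus remark}(a) extends the bound to all $h$. I expect this last step—converting integral bounds for $\Delta_h^k Zg$ into bounds for $\Delta_h^k(\psi Zg)$ via the Leibniz-type identity underlying Lemma \ref{bonus lemma}—to be the main technical obstacle.
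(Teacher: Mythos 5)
Your proposal is correct and follows essentially the same route as the paper: $\Delta_h^k Zg = Z\tau_h^k g$ plus unitarity and \eqref{part2_steins equality} for $(i)\Leftrightarrow(ii)$; a Parseval-in-$y$ computation (equivalent to the paper's $S_h$ operator) for $(iv)\Leftrightarrow(v)$; and for $(ii)\Leftrightarrow(iii)$, \eqref{part2_steins equality for 2 variebals} combined with a cutoff $\psi\equiv 1$ on a neighbourhood of $[0,1]^2$, $\Z^2$-periodicity of $\abs{Zg}$, and the two-variable analogues of Lemma~\ref{bonus lemma} (Remark~\ref{bonus remark}). The "main technical obstacle" you flag at the end is exactly what Remark~\ref{bonus remark}$(b)$ supplies, so the argument is complete.
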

\begin{proof}
    $(i)\Leftrightarrow (ii):$ In fact, an even stronger result
    holds: there exists a constant $C>0$ such that for $g \in
    L^2(\R)$ we have
    \begin{equation}\label{zak and smoothness, identity}\int_\R \iint_{[0,1]^2} \frac{\abs{\Delta_h^k Zg(x,y)}^2}{\abs{h}^{1 + r}} \dif x \dif y \dif h
        \quad = \quad
        C  \int_\R \abs{\xi}^r \abs{\hat{g}(\xi)}^2 \dif \xi.
        \end{equation}
To see this, first note that $\Delta_h Z g= Z \tau_h g$. So, by
induction $\Delta_h^k Z g = Z \tau_h^k g$.
    Since the Zak transform is a unitary operator, this implies that
    \begin{equation*}
        \iint_{[0,1]^2} \abs{\Delta_h^k Zg }^2 \dif x \dif y
        =
        \int_\R \abs{\tau_h^k g(t)}^2 \dif t.
    \end{equation*}
    Hence, \eqref{zak and smoothness, identity} follows from \eqref{part2_steins equality}.

$(ii)\Leftrightarrow (iii):$ As follows from the identity
\eqref{part2_steins equality for 2 variebals} and Remark
\ref{bonus remark}$(a)$, it is enough to show that $(ii)$ holds if
and only if 
\begin{equation}
\label{what we need to show in order to show guatam}
 \int_{-1}^1\iint_{\R^2} \frac{\abs{\Delta_h^k (\psi Zg)(x,y)}^2}{\abs{h}^{1+r}} \dif x \dif y \dif
 h<\infty
 \end{equation}
for every compactly supported function $\psi \in
C^{\infty}(\R^2)$.

Assume first that \eqref{what we need to show in order to show
guatam} is satisfied for some function $g\in L^2(\R)$. Let $\psi
\in C^{\infty}(\R^2)$ be a compactly supported function which
satisfies $\psi=1$ on $[-k,k+1] \times [0,1]$. 
Note that $\Delta_h^k( \psi Zg) = \Delta_h^k Zg$ for $(x,y) \in Q$ and $h \in [-1,1]$.
So the integral in $(ii)$ can be written as
\[ \int_{-1}^1 \iint_{[0,1]^2} \frac{\abs{\Delta_h^k (\psi
Zg)(x,y)}^2}{\abs{h}^{1 + r}} \dif x \dif y \dif h
+\int_{|h|>1}
\iint_{[0,1]^2} \frac{\abs{\Delta_h^k Zg(x,y)}^2}{\abs{h}^{1 + r}}
\dif x \dif y \dif h.
\]
The first integral in this sum converges by \eqref{what we need to show in order to show guatam}, while
the second integral converges by an application of Remark \ref{part2_delta and gamma remark}$(a)$ and the quasi-periodicity
of $Zg(x,y)$.

Next, suppose $(ii)$ holds for some $g\in L^2(\R)$ and let $\psi
\in C^{\infty}(\R^2)$ be a compactly supported function. It
follows by the quasi-periodicity of $Zg(x,y)$
that for any positive integer $n$,
\begin{equation} \label{emergency}
\int_{-1}^1 \iint_{[-n,n]^2} \frac{\abs{\Delta_h^k
Zg(x,y)}^2}{\abs{h}^{1 + r}} \dif x \dif y \dif h<\infty.
\end{equation}
Choose $n \in \N$ big enough for
the support of $\psi$ to be included in $[-n+k,n-k]  \times [-n,n]$. This allows
us to write \eqref{what we need to show in order to show guatam}
as
\begin{equation*}
    \int_{-1}^1 \iint_{[-n,n]^2} \frac{\abs{\Delta_h^{k} (\psi Zg)(x,y) }^2}{\abs{h}^{1+r}}  \dif x \dif y \dif h.
\end{equation*}
By Remark \ref{bonus remark}$(b)$, the inequality \eqref{emergency} implies
that this is finite.

    $(iv)\Leftrightarrow (v):$ Let $S_h: L^2(\R)\rightarrow L^2(\R)$ be the operator defined by
    \begin{equation*}
        \left(S_hf\right) (t) = f(t) (\e^{-2\pi \im n h} - 1 ), \quad \text{for} \quad n \leq t < n+1.
    \end{equation*}
    It is easily verified that $\Gamma_h Z g = Z S_h g$ on $Q$.
    So, by induction $\Gamma_h^k  Z g = Z  S_h^k g$. Again, since the Zak transform
    is a unitary operator, we get
    \begin{equation*}
        \iint_{[0,1]^2} \abs{\Gamma_h^k Zg}^2 \dif x \dif y
        =
        \int_\R \abs{S_h^k g(t)}^2 \dif t.
    \end{equation*}
    As above,
    \begin{align*}
        \iint_{\R^2} \frac{ \abs{S_h^k g(t)}^2 }{\abs{h}^{1+s}} \dif t \dif h
        &=\sum_{n\in \Z} \int_\R \frac{\abs{\e^{-2\pi \im n h} - 1}^{2k}}{\abs{h}^{1+s}} \dif h
        \int_{n}^{n+1} \abs{g(t)}^2 \dif t \\
         &=
        C \sum_{n \in \Z} \abs{n}^s \int_{n}^{n+1} \abs{g(t)}^2 \dif t.
    \end{align*}
    It is clear that the right-hand side converges if and only if the same is true for the integral
$\int_\R \abs{t}^s \abs{g(t)}^2 \dif t$.

$(v)\Leftrightarrow (vi):$ This is proved in a similar way as $(ii)\Leftrightarrow (iii)$.
\end{proof}

\subsection{Lipschitz type conditions and the Zak transform}

The following result  appears implicitly in   \cite[Theorem 3.2]{heil_powell06}.
\begin{lemma} \label{prelim: zero of zac}
   Let $g\in L^2(\R)$ and $r,s > 0$ be such that ${1}/{r}+{1}/{s}<1$.
   If both integrals in \eqref{e:bl cond r-s}
are finite,
    then $Zg$ is continuous on $\R^2$ and has a zero in $Q$.
\end{lemma}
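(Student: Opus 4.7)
My plan is to split the statement into two parts: continuity of $Zg$, then existence of a zero in $Q$.

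For continuity, I would invoke Lemma \ref{part2_stein lemma for delta and gamma} to translate the two hypotheses on $g$ into bounds on $\widehat{\psi Zg}$ for an arbitrary compactly supported smooth $\psi$ on $\R^2$. The implications $(i)\Rightarrow(iii)$ and $(iv)\Rightarrow(vi)$ of that lemma yield $\iint |u|^r|\widehat{\psi Zg}|^2\,\dif u\,\dif v<\infty$ and $\iint |v|^s|\widehat{\psi Zg}|^2\,\dif u\,\dif v<\infty$. Since $\psi Zg\in L^2(\R^2)$ (the quasi-periodicity makes $Zg$ locally $L^2$), Plancherel combines these into $\iint(1+|u|^r+|v|^s)|\widehat{\psi Zg}|^2\,\dif u\,\dif v<\infty$, and a Cauchy--Schwarz estimate with weight $w(u,v)=1+|u|^r+|v|^s$ reduces the continuity of $\psi Zg$ to the single estimate $\iint \dif u\,\dif v/w(u,v)<\infty$.

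The key computation I would then perform is to check that this integral is finite precisely when $1/r+1/s<1$. The hypothesis forces $r,s>1$; integrating first in $v$ yields $\int_\R \dif v/(A+|v|^s)\asymp A^{1/s-1}$ for $A\geq 1$, so the double integral reduces to one of the form $\int \dif u/(1+|u|^{r(1-1/s)})$, which converges iff $r(1-1/s)>1$, i.e., iff $1/r+1/s<1$. Thus $\widehat{\psi Zg}\in L^1(\R^2)$, whence $\psi Zg\in C(\R^2)$; since $\psi$ can be chosen to equal $1$ on any prescribed compact set, $Zg\in C(\R^2)$.

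For the zero in $Q$, I would argue by contradiction using the quasi-periodicities $Zg(x+1,y)=\e^{2\pi \im y}Zg(x,y)$ and $Zg(x,y+1)=Zg(x,y)$. If $Zg$ is continuous and nowhere vanishing on $Q$, then by simple connectedness there is a continuous $F:Q\to\C$ with $Zg=\e^F$. The quasi-periodicities force $F(1,y)-F(0,y)=2\pi \im(y+k)$ and $F(x,1)-F(x,0)=2\pi \im m$ for integers $k,m$ (constant by continuity and integer-valuedness modulo $2\pi \im$). Computing $F(1,1)-F(0,0)$ along the two boundary paths (through $(1,0)$ or through $(0,1)$) gives the incompatible values $2\pi \im(k+m)$ and $2\pi \im(k+m+1)$, a contradiction. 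The main obstacle I foresee is the sharp convergence estimate $\iint \dif u\,\dif v/w<\infty$ in the middle paragraph, where the hypothesis $1/r+1/s<1$ enters decisively; the rest is routine, combining Lemma \ref{part2_stein lemma for delta and gamma} with the topological consequences of quasi-periodicity.
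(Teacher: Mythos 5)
Your proposal is correct, but it takes a genuinely different route from the paper. The paper's own proof of this lemma is essentially a chain of citations: it invokes Proposition~1 and Theorem~1 of Gr\"ochenig's 1996 paper to conclude that $g$ lies in the Wiener amalgam space $W(L^\infty,\ell^1)$, observes that $\hat g\in L^1(\R)$ (hence $g$ is continuous), and then refers to Lemmas~8.2.1(c) and 8.4.2 of Gr\"ochenig's book for the facts that such a $g$ has a continuous Zak transform which must vanish somewhere in $Q$. You instead give a self-contained argument: for continuity, you combine Lemma~\ref{part2_stein lemma for delta and gamma} with a Cauchy--Schwarz estimate against the weight $w(u,v)=1+\abs{u}^r+\abs{v}^s$ to obtain $\widehat{\psi Zg}\in L^1(\R^2)$, where the finiteness of $\iint w^{-1}\,\dif u\,\dif v$ is precisely where $1/r+1/s<1$ enters; for the zero, you spell out the standard winding-number argument from quasi-periodicity that Gr\"ochenig's Lemma~8.4.2 encapsulates. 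Both parts are sound. Interestingly, your Cauchy--Schwarz-with-weight device is exactly the mechanism that the paper deploys in the proof of the subsequent Lemma~\ref{zak lemma} (where the finer $\sin^2$ factor is retained to obtain a quantitative Lipschitz modulus); your proof amounts to running the coarse version of that estimate one lemma earlier, which has the advantage of relying only on machinery already developed in the paper. The only cost is that you do not recover the stronger Wiener-amalgam membership $\sum_k\norm{g}_{L^\infty(k,k+1)}<\infty$ that the paper's citation route yields, but the lemma does not require it.
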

\begin{proof}
    Since this result will be of importance in what follows, we give a short indication of a  proof.
    By combining Proposition 1 and Theorem 1 of \cite{grochenig96},
    one can check that
    a function for which both integrals in \eqref{e:bl cond r-s} are finite 
    satisfies $\sum_{k \in \Z}
    \norm{g}_{L^\infty(k,k+1)}< \infty$ (i.e., it belongs to the 
    Wiener space). Since, in addition, $\hat{g}\in L^1$ implies that $g$ is continuous,
    it follows that $Zg$ is also continuous and therefore has a zero in $Q$ (see Lemma 8.2.1 part (c) and  Lemma 8.4.2 in \cite{grochenig01}).
%
\end{proof}
 The next lemma
establishes Lipschitz type conditions for the Zak transform of  
functions satisfying the conditions of Lemma \ref{prelim: zero of zac}.
It is of particular interest for us that this
lemma describes how ``deep''  the zero of $Zg$ must be.
\begin{lemma} \label{zak lemma}
    Let $r>0$ and $s>0$ satisfy
    ${1}/{r}+{1}/{s}<1$, and define
     \begin{equation} \label{e:who is phi}
        \phi_{r,s}(x) =
        \left\{ \begin{array}{cc}
            \abs{x}^{  2 - r \left( \frac{3}{r} + \frac{1}{s} - 1 \right) }
            &
            \text{if} \quad   \frac{3}{r} + \frac{1}{s} > 1,
            \vspace{0.2cm}
            \\
            \abs{x}^2 \log \left( 1 +  \frac{1}{\abs{x}} \right)
            &
            \text{if} \quad    \frac{3}{r} + \frac{1}{s} = 1,
            \vspace{0.2cm}
            \\
            \abs{x}^2
            &
            \text{if} \quad    \frac{3}{r} + \frac{1}{s} < 1.
         \end{array} \right.
    \end{equation}
Suppose that for   $g \in L^2(\R)$ both integrals in \eqref{e:bl cond r-s}
are finite. Then given $(a,b) \in \R^2$ we have
    \begin{equation}\label{oh my lord}
        \abs{Zg(x,y) - Zg(a,b)}^2 \leq C \Big( \phi_{r,s}(x-a) + \phi_{s,r}(y-b) \Big)
    \end{equation}
on $\R^2$, where $C>0$ is a constant not
depending on $x$ and $y$.
\end{lemma}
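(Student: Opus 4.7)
The plan is to combine the integral smoothness statements for $Zg$ given in parts $(iii)$ and $(vi)$ of Lemma~\ref{part2_stein lemma for delta and gamma} with a direct Fourier--Cauchy--Schwarz computation. First I would reduce to local displacements: the proof of Lemma~\ref{prelim: zero of zac} already shows that $g$ lies in the Wiener space, and hence $\|Zg\|_\infty<\infty$. Since $1/r+1/s<1$ forces the exponent in the first case of \eqref{e:who is phi} to be strictly positive, $\phi_{r,s}(x-a)+\phi_{s,r}(y-b)$ is bounded below by a positive constant as soon as $|x-a|$ or $|y-b|$ exceeds, say, $1$, and the inequality \eqref{oh my lord} is then trivial. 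So it suffices to work in the range $|x-a|,|y-b|\le 1$.

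In that range, fix $(a,b)$ and pick a compactly supported $\psi\in C^\infty(\R^2)$, taken as a translate of a single fixed bump, that equals $1$ on a disk around $(a,b)$ containing $(x,y)$. Setting $F:=\psi Zg$, Lemma~\ref{part2_stein lemma for delta and gamma}$(iii)$ and $(vi)$ give a constant $C_0$ (independent of $(a,b)$, by the $\Z^2$-periodicity of $|\Delta_h^k Zg|^2$ and $|\Gamma_h^k Zg|^2$) with
\begin{equation*}
\iint_{\R^2}\bigl(1+|u|^r+|v|^s\bigr)\,|\hat F(u,v)|^2 \,\dif u\,\dif v \;\le\; C_0.
\end{equation*}
Because $F\equiv Zg$ near $(a,b)$, it suffices to prove \eqref{oh my lord} with $Zg$ replaced by $F$.

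The core step is to bound the one-variable increment $|F(a+h,y)-F(a,y)|^2$. Writing this difference via the Fourier inversion formula and applying Cauchy--Schwarz with the weight $(1+|u|^r+|v|^s)^{-1}$ reduces the problem to estimating
\begin{equation*}
I(h) \;:=\; \iint \frac{|\e^{2\pi \im hu}-1|^2}{1+|u|^r+|v|^s}\,\dif u\,\dif v.
\end{equation*}
Integrating out $v$ (permissible because $1/r+1/s<1$ forces $s>1$) gives a constant multiple of $\int |\e^{2\pi \im hu}-1|^2(1+|u|^r)^{-(1-1/s)}\dif u$. Splitting the $u$-integral at $|u|=1/|h|$ and using $|\e^{2\pi \im hu}-1|\le 2\pi|hu|$ on the inner region and $\le 2$ on the outer, a short computation shows the two pieces balance at the rate $|h|^{r(1-1/s)-1}$; the convergence of the outer piece relies on $r(1-1/s)>1$, which is exactly the hypothesis $1/r+1/s<1$. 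Since $r(1-1/s)-3$ has the same sign as $1-3/r-1/s$, the three cases of \eqref{e:who is phi} appear naturally and $I(h)\le C\phi_{r,s}(h)$. An analogous computation with the roles of the variables swapped yields $|F(x,b+k)-F(x,b)|^2\le C\phi_{s,r}(k)$. Combining the two via the triangle inequality applied to $F(x,y)-F(a,b)=[F(x,y)-F(a,y)]+[F(a,y)-F(a,b)]$ gives \eqref{oh my lord}.

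The main obstacle I expect is the bookkeeping: first, the three-case computation of $I(h)$ must reproduce the piecewise definition of $\phi_{r,s}$, including the logarithmic critical regime where the inner and outer contributions combine to $|h|^2\log(1/|h|)$; and second, the constant $C_0$ must be genuinely uniform in the translate of $\psi$, for which one traces through the proof of $(ii)\Leftrightarrow(iii)$ in Lemma~\ref{part2_stein lemma for delta and gamma} using the $\Z^2$-periodicity of $|Zg|$ together with Remark~\ref{bonus remark}$(b)$.
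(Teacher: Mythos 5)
Your proposal is correct and follows essentially the same strategy as the paper: Cauchy--Schwarz against the weight $(1+|u|^r+|v|^s)$ supplied by Lemma~\ref{part2_stein lemma for delta and gamma}$(iii)$ and $(vi)$, then a separation of the $x$- and $y$-contributions and a split of the resulting weighted integral producing the three regimes of $\phi_{r,s}$. The only differences are cosmetic: the paper separates directions by $\sin^2$-subadditivity inside the integrand and then splits at $Q$ versus $Q^c$ after a change of variables, whereas you separate via the triangle inequality on increments, integrate out $v$, and split at $|u|=1/|h|$; also, the uniformity in $(a,b)$ you flag as an obstacle is not required by the statement (the constant $C$ need only be independent of $(x,y)$), as the paper notes in the remark following the lemma.
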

As follows from the proof, and from the fact that $Zg$ is a
quasi-periodic function, the constant $C$ can be chosen in such a way that it does not depend on
the point $(a,b)$. This, however, is not needed for our purposes.

\begin{proof}
    By Lemma \ref{prelim: zero of zac}, $Zg$ is continuous, and so it is enough to prove \eqref{oh my lord} in a
     neighbourhood of $(a,b)$.
    Choose a compactly supported function $\psi \in C^\infty(\R^2)$
    that satisfies $\psi \equiv 1$ in a neighbourhood $U$ of $(a,b)$.
      By the Cauchy--Schwarz inequality, the following estimate holds for every
     $(x,y) \in U$,
    \begin{align*}
        \abs{Zg(x,y) - Zg(a,b)} &= \abs{(\psi Zg)(x,y) - (\psi Zg)(a,b)} \\
        &\leq \iint_{\R^2}  \Abs{1 - \e^{2\pi \im ((x-a)u + (y-b)v)}}\abs{\widehat{(\psi Zg)}(u,v) } \dif u \dif v \\
         &\leq 2 \bigg(\underbrace{\iint_{\R^2} (1 + \abs{u}^r+\abs{v}^s) \abs{\widehat{(\psi Zg)}(u, v)}^2 \dif u  \dif
        v}_{(I)}\bigg)^{1/2} \\
        &\qquad \qquad \times
        \bigg( \underbrace{\iint_{\R^2} \frac{\sin^2 \pi((x-a)u + (y-b)v)}{1 + \abs{u}^r + \abs{v}^s}
        \dif u \dif v}_{(II)} \bigg)^{1/2}.
    \end{align*}
    The integral $(I)$ is finite by Lemma \ref{part2_stein lemma for delta and gamma}. Indeed,
    the function
     $\psi Zg$ satisfies both condition $(iii)$ and $(vi)$ of the lemma.
    As for $(II)$, the symmetry of the integrand and the inequality $\sin^2(x+y) \leq 2(\sin^2 x + \sin^2 y)$ imply that
    \begin{equation*}
         (II) \leq
         8 \underbrace{\iint_{[0, \infty)^2}  \frac{\sin^2 \pi (x-a)u}{1 + u^r + v^s} \dif u \dif v}_{(III)}
         + 8 \underbrace{\iint_{[0, \infty)^2}  \frac{\sin^2 \pi (y-b)v}{1 + u^r + v^s} \dif u \dif v}_{(IV)}.
    \end{equation*}
    By an appropriate change of variables, we get
    \begin{equation*}
      (III) = \abs{x-a}^{r - 1 - \frac{r}{s}} \underbrace{\iint_{[0, \infty)^2} \frac{\sin^2 \pi u}{\abs{x-a}^r + u^r + v^s} \dif u \dif v}_{(\widetilde{III})}.
    \end{equation*}
    To estimate this integral we divide the area of integration into two parts:
    \begin{align*}
        Q &= [0,1]^2, &
        Q^C &= [0,\infty)^2 \setminus Q&.
    \end{align*}
    This induces the splitting $(\widetilde{III}) = (III_Q) + (III_{Q^c})$.
    We use the inequalities $\abs{\sin x} \leq x$ and 
    \begin{equation} \label{secret trick}
        c_\beta  (x + y)^\beta \leq x^\beta + y^\beta \leq C_\beta (x+y)^\beta, \qquad
        \forall \beta>0,\:\: x\geq 0,\:\: y \geq 0,
    \end{equation}
    to find that, since $s>1$,
    \begin{align*}
        (III_Q)  &\leq C \iint_Q  \frac{u^2}{(\abs{x-a}^{\frac{r}{s}} + u^{\frac{r}{s}} + v )^s} \dif u \dif v \\
                &\leq C \left( \int_0^1 \frac{u^2}{(\abs{x-a}^{\frac{r}{s}} + u^{\frac{r}{s}})^{s-1}} \dif u +  1 \right)  \\
           &\leq C\left( \int_0^1 \frac{u^2}{(\abs{x-a}^3 + u^3)^{(s-1)\frac{r}{3s}}} \dif u +1 \right)
            =
            C\left( \int_{\abs{x-a}^3}^{1 + \abs{x-a}^3} \frac{\dif w}{w^{(s-1)\frac{r}{3s}}} \dif u +1 \right).
    \end{align*}
    Hence,
    \begin{equation*}
        (III_Q) \leq
            \left\{
            \begin{array}{cc}   C &  \text{if} \quad \frac{3}{r} + \frac{1}{s} > 1, \vspace{0.2cm} \\
                                C  \log \left( 1 + \frac{1}{\abs{x-a}} \right) & \text{if} \quad  \frac{3}{r} + \frac{1}{s} = 1, \vspace{0.2cm} \\
                                C \abs{x-a}^{3 - r+ \frac{r}{s}} & \text{if} \quad
                                \frac{3}{r} + \frac{1}{s} < 1.
                \end{array}
            \right.
    \end{equation*}
    By similar estimates, and the fact that
    ${1}/{r}+{1}/{s}<1$, it is easy to check that $(III_{Q^c})\leq C$.

    Repeating these arguments for the integral $(IV)$, the lemma is established.
\end{proof}

\begin{remark} \label{phi remark}
Lemma \ref{zak lemma} holds also in the extremal case $s=\infty$,
i.e., when $g$ is compactly supported. This can be shown using
similar arguments. See also \cite{heil_powell09}.
\end{remark}

\begin{remark}  
Lemma \ref{zak lemma} is sharp. 
That is, for every $r,s > 0$ as above and $\eps>0$, there exist a function $g$ for which both
integrals in \eqref{e:bl cond r-s} converge and a point $(a,b)\in Q$ such that the
inequality
\begin{equation}
        \abs{Zg(x,y) - Zg(a,b)}^2 \geq C \Big( \phi_{r+\eps,s+\eps}(x-a) + \phi_{s+\eps,r+\eps}(y-b) \Big)
    \end{equation}
holds in a neighbourhood of $(a,b)$ (note that for $\epsilon >0$ we have $\phi_{r+\epsilon, s+\epsilon}(x) \leq C \phi_{r,s}(x)$ in $Q$).
 Indeed, the functions constructed
in the   proof of part $(b)$ of Theorem \ref{main result} provide the
required estimates.
\end{remark}


\section{Theorem \ref{main theorem} -- First part} \label{proof - part a}

    To prove Theorem \ref{main result}$(a)$ we will combine  
    Lemma \ref{l:basic lemma, function version}$(b)$ with lemmas \ref{prelim: zero of zac} and \ref{zak lemma}. In order to do so, we need to find a family of test
    functions for which we are able to estimate both   $L^2_{|Zg|^2}(Q)$ norms and the $\ell^q$ norm of their Fourier coefficients. 
    \subsection{A family of test functions}
    For $\a>0$ and $\b>0$, a suitable family of functions is given by
    \begin{equation}
		\label{e: def of f, a,b}
        f_{\a,\b}(x,y):=\frac{1}{\left[1+(1-|x-\frac{1}{2}|^{\a})\e^{2\pi \im y}\right]^{\b}}, \quad (x,y)\in Q.
    \end{equation}
    Here $z^{\b}=\e^{\b \log z}$, where $\log z$ is the
    principle value of the logarithm on $\C\setminus [-\infty,
    0]$.
    Note that the functions $f_{\a,\b}$ satisfy
    \begin{equation}
        \label{e:growth of f_a,b}
        |f_{\a,\b}(x,y)|^2\leq
        \frac{C}{\left(|x-\frac{1}{2}|^{2\a}+|y-\frac{1}{2}|^2\right)^{\b}}
    \end{equation}
    for some constant $C=C(\alpha,\beta)$.
    Indeed, for $(x,y)\in Q$ we have
    \begin{align*}
        \left|1+\left(1-\Big|x-\frac{1}{2}\Big|^{\a}\right)\e^{2\pi \im y}\right|^2
        &=
        \Big|x-\frac{1}{2}\Big|^{2\a}+2\left(1-\Big|x-\frac{1}{2}\Big|^{\a}\right)\left(\cos{2\pi y}+1\right)
        \\
        &\geq
        C \left(\Big|x-\frac{1}{2}\Big|^{2\a}+\Big|y-\frac{1}{2}\Big|^2\right).
    \end{align*}

    The following Lemma provides the required estimate for the
    Fourier coefficients of $f_{\a,\b}$.
    \begin{lemma} \label{coefficient lemma}
    Fix $q>2$. For every $0<\a<1$ and
    $(1-{1}/{q})(1+{1}/{\a})\leq \b<(1+{1}/{\a})$, the
    function $f=f_{\a,\b}$ belongs to $L^1(Q)$ and its Fourier
    coefficients satisfy
    \begin{equation*}
        \sum_{m,n \in \Z}|\hat{f}(m,n)|^q=\infty.
    \end{equation*}
    \end{lemma}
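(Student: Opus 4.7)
The plan is to expand $f=f_{\alpha,\beta}(x,y)$ as a binomial series in $y$ and reduce the problem to asymptotics of the oscillatory integrals
\[
I_n(m) := \int_0^1 \bigl(1-|x-\tfrac12|^\alpha\bigr)^n\,\e^{-2\pi \im m x}\,\dif x.
\]
Integrability is immediate: by the pointwise bound \eqref{e:growth of f_a,b}, after translating the singularity to the origin it suffices to check
\[
\int_0^{1/2}\!\int_0^{1/2}(u^{2\alpha}+v^2)^{-\beta/2}\,\dif u\,\dif v < \infty,
\]
which is elementary via the substitution $v = u^\alpha w$ under the hypothesis $\beta < 1+1/\alpha$. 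For the Fourier coefficients, for each $x\neq 1/2$ the quantity $z(x,y) = (1-|x-\tfrac12|^\alpha)\e^{2\pi \im y}$ satisfies $|z|<1$, so the series $(1+z)^{-\beta} = \sum_{k\geq 0}\binom{-\beta}{k}z^k$ converges uniformly in $y$. Termwise integration in $y$ followed by Fubini (justified by $f\in L^1(Q)$) gives $\hat f(m,n)=0$ for $n<0$ and
\[
\hat f(m,n) = \binom{-\beta}{n}\,I_n(m)\quad\text{for } n\geq 0.
\]
Together with the Stirling asymptotic $|\binom{-\beta}{n}| \asymp n^{\beta-1}$, the task reduces to lower-bounding $\sum_m |I_n(m)|^q$.

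The crux, and main obstacle, is a uniform lower bound on $|I_n(m)|$ over a band of $m$'s of width comparable to $n^{1/\alpha}$. Using symmetry about $1/2$ and the rescaling $u = v/n^{1/\alpha}$,
\[
I_n(m) = \frac{2(-1)^m}{n^{1/\alpha}}\int_0^{n^{1/\alpha}/2} \Bigl(1-\frac{v^\alpha}{n}\Bigr)^n \cos\Bigl(\frac{2\pi m v}{n^{1/\alpha}}\Bigr)\,\dif v.
\]
Since $(1-v^\alpha/n)^n$ tends to $\e^{-v^\alpha}$ pointwise and is dominated by $\e^{-v^\alpha}$, and the limit $F_\alpha(t) := \int_0^\infty \e^{-v^\alpha}\cos(2\pi tv)\,\dif v$ is continuous with $F_\alpha(0) = \Gamma(1/\alpha)/\alpha > 0$, a uniformity argument in the scaled frequency $t = m/n^{1/\alpha}$ produces constants $c, t_0 > 0$ and $N\in\N$ such that $|I_n(m)| \geq c\,n^{-1/\alpha}$ for all $n \geq N$ and $|m| \leq t_0 n^{1/\alpha}$. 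Combining these ingredients,
\[
\sum_{m,n}|\hat f(m,n)|^q \gtrsim \sum_{n \geq N} n^{q(\beta-1)}\cdot n^{1/\alpha}\cdot n^{-q/\alpha} = \sum_{n \geq N} n^{q(\beta-1)+(1-q)/\alpha},
\]
which diverges precisely when $q(\beta-1) + (1-q)/\alpha \geq -1$; elementary algebra shows this is equivalent to the hypothesis $\beta \geq (1-1/q)(1+1/\alpha)$. Note that using only the term $m=0$ would give divergence only under the strictly stronger condition $\beta \geq 1+1/\alpha-1/q$, so the gain of a factor $n^{1/\alpha}$ from the sum over $m$ is essential for sharpness.
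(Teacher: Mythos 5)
Your proof is correct and, although it starts from the same binomial expansion, it takes a genuinely different route to the key lower bound. Both arguments reduce to the oscillatory integral $I_n(m)=\int_0^1(1-|x-\tfrac12|^\alpha)^n\e^{-2\pi\im mx}\,\dif x$ with the factor $|b_n|\asymp n^{\beta-1}$. The paper then integrates by parts, uses monotonicity of the resulting integrand to discard all but a short initial interval, applies the mean value theorem, and arrives at the explicit pointwise estimate $|\hat f(2k,n)|\gtrsim n^{\beta+1}k^{-(2\alpha+1)}\e^{-cn/k^\alpha}$, valid for every $n\geq2$ and $k\geq1$; divergence of the double sum is then extracted by fixing $k$ and summing over $n$ in blocks of length $\sim k^\alpha$. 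You instead rescale $u=v/n^{1/\alpha}$ so that $n^{1/\alpha}I_n(m)$ is, up to sign, a truncated version of $F_\alpha(t)=\int_0^\infty\e^{-v^\alpha}\cos(2\pi tv)\,\dif v$ at $t=m/n^{1/\alpha}$; dominated convergence with a $t$-uniform error (the bound $\int_0^\infty\big|(1-v^\alpha/n)^n\chi_{[0,n^{1/\alpha}/2]}-\e^{-v^\alpha}\big|\,\dif v\to0$ does not involve $t$), together with $F_\alpha(0)>0$ and continuity of $F_\alpha$, gives the band estimate $|I_n(m)|\gtrsim n^{-1/\alpha}$ for $|m|\leq t_0 n^{1/\alpha}$ and $n\geq N$, after which the summation is immediate. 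The two routes are essentially dual: the paper fixes $k$ and sums over $n$ in a block of width $k^\alpha$, you fix $n$ and sum over $m$ in a band of width $n^{1/\alpha}$, and both identify the regime $n\sim m^\alpha$ as carrying the dominant contribution. Your version is shorter and more conceptual, cleanly isolating the scaling limit, while the paper's is more elementary (no passage to a limit) and produces an explicit lower bound at every pair $(2k,n)$ with $n\geq2$ rather than only on a band.
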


    \begin{proof}
    The fact that   $\b<1+ {1}/{\a}$ implies $f\in L^1(Q)$
    follows from \eqref{e:growth of f_a,b}, as can be
    easily verified using inequality \eqref{secret trick}.

    For any $n \geq 2$ and $m=2k$, with $k \in \N$, we estimate $\abs{\hat{f}(m,n)}$ from below.
    We write $h(x)=1-|x-{1}/{2}|^{\a}$,
    and note that for $x\neq {1}/{2}$ we have $0<h(x)<1$. 
    With this, we evaluate
    \begin{equation}
    \begin{split}    \label{e:first estimate for fourier coef]}
        \hat{f}(m,n)
        &=\int_0^1\int_0^1\frac{\e^{-2\pi \im (mx+ny)}}{(1+h(x)\e^{2\pi \im y})^{\b}}\dif x\dif y
        \\
        &=
        \int_0^1\e^{-2\pi i mx}\int_0^1\e^{-2\pi \im ny}\left(\sum_{j=0}^{\infty}(-1)^{j}b_{j} h^{j}(x)\e^{2\pi i j y}\right)\dif y\dif x
        \\
        &=
        (-1)^n b_n \underbrace{\int_0^1 h^n(x)\e^{-2\pi i mx}\dif x}_{(I)},
    \end{split}
    \end{equation}
	where 
    $b_n={\b (\b+1) \cdots (\b+n-1)}/{n!}$ are the coefficients of the Taylor expansion of $(1 - z)^{-\beta}$ at the origin.    %
   It follows by the product formula for the Gamma function that  we have $c n^{\b -1 } \leq b_n \leq C n^{\b-1}$, where $c$ and $C$ are positive constants.

     By a change of variables and the fact that $(1-|x|^{\a})^n$ is even, we get
    \begin{align*}
 		(I)
        &=
        2\int_0^{\frac{1}{2}}\left(1-x^{\a}\right)^n\cos 2\pi mx\dif x
        =
        \frac{1}{k}\int_0^k\bigg(1-\left(\frac{x}{2k}\bigg)^{\a}\right)^n\cos 2\pi x\dif x.
    \end{align*}
    We integrate by parts and find that the last expression is equal to
    \begin{equation}
    \label{e:second estimate for fourier coef}
        C \frac{n}{k^{\a+1}} \int_0^k\left(1-\left(\frac{x}{2k}\right)^{\a}\right)^{n-1}{x^{\alpha-1}}\sin 2\pi x\dif x.
    \end{equation}
    The function $(1-|{x}/{2k}|^{\a})^{n-1}x^{\a-1}$
    is decreasing on $(0,k)$ since $0<\a<1$. So for any positive integer $\nu < k$ the integral
    $\int_\nu^{\nu+1}(1-|{x}/{2k}|^{\a})^{n-1}x^{\alpha-1}\sin 2\pi
    x\dif x$ is positive. Using this and \eqref{e:second estimate for fourier coef}, we get
     \begin{equation}
    	\label{e:third estimate for fourier coef}
       (I) \geq C \frac{n}{k^{\a+1}} \underbrace{\int_0^1\left(1-\left(\frac{x}{2k}\right)^{\a}\right)^{n-1}{x^{\a-1}}\sin 2\pi x\dif x}_{(II)}.
    \end{equation}
    We use the same type of argument again to find that 
    \begin{equation}     
    	\begin{split} \label{e:fourth estimate for fourier coef}
		 (II) &\geq 
        \left(\int_0^{\frac{1}{4}}+\int_{\frac{3}{4}}^1\right)\left(1-
        \left(\frac{x}{2k}\right)^{\a}\right)^{n-1}x^{\a-1}\sin 2\pi x\dif x
        \\
        &\geq
		C\left[\left(1-\left(\frac{1}{8k}\right)^{\a}\right)^{n-1}-\left(1-\left(\frac{3}{8k}\right)^{\a}\right)^{n-1}\right].
	\end{split}
    \end{equation}
    Set $F(x)=C(1-({x}/{k})^{\a})^{n-1}$. 
    There exists a number ${1}/{8}<\tau<{3}/{8}$ such
    that the right-hand side of \eqref{e:fourth estimate for fourier coef} is equal to $4^{-1} F'(\tau)$. 
    It now follows that
    \begin{equation*} \label{ihaw}
		(II)        \geq 
        C \frac{n}{k^{\a}}\e^{n\log \left(1-\left(\frac{3}{8k}\right)^{\a}\right)}.
    \end{equation*}
    For  $0<x\leq{3}/{8}$, we have $\log (1-x)\geq -2x$  (actually,
    even for bigger $x$). Combining this with \eqref{e:first estimate for fourier
    coef]}, \eqref{e:third estimate for fourier coef}, and the asymptotic behaviour of $b_n$, we find that for constants $C_1>0$ and
    $C_2>0$ depending only on $\a$ and $\b$, we have
    \[
    |\hat{f}(m,n)|\geq
    C_1\frac{n^{\b+1}}{k^{2\a+1}}\e^{-C_2\frac{n}{k^{\a}}}
    \]
    whenever $n\geq2$ is an integer and $m=2k>0$ is an even integer.

    We are now ready to estimate the $\ell^q$ norm of the Fourier
    coefficients of $f$. First, 
    \begin{align*}
        \sum_{m,n \in \Z}|\hat{f}(m,n)|^q
        \geq
        C_1 \sum_{k=1}^{\infty} \frac{1}{k^{q(2\alpha+1)}} \underbrace{\sum_{n=2}^{\infty} {n^{q(\b+1)}}\e^{-C_2 q\frac{n}{k^{\a}}}}_{(*)}.
	\end{align*}
	 For a positive number $x$, we denote by $\lceil x\rceil$ the smallest integer $l$ such that    $l \geq x$. In this way,
    \begin{align*}
        (*) &\geq  \sum_{\nu=1}^{\infty}\sum_{n=\nu\lceil k^{\a}\rceil}^{(\nu+1)\lceil k^{\a}\rceil-1}
          n^{q(\b+1)} \e^{-C_2q\frac{n}{k^{\a}}} 
        \\
        &\geq
         k^{\a} \sum_{\nu=1}^{\infty} 
         {(\nu k^{\a})^{q(\b+1)}} \e^{-2C_2 q\frac{(\nu+1)k^{\a}}{k^{\a}}} 
        \\
        &=
         k^{\a(1 + q\beta + q)} \underbrace{\sum_{\nu=1}^{\infty} \nu^{q(\b+1)}\e^{-2C_2q(\nu+1)}}_{(**)}.
    \end{align*}
	Since $(**)$ converges,  
    \begin{equation*}
        \sum_{m,n \in \Z}|\hat{f}(n,m)|^q \geq
	      C \sum_{k=1}^{\infty} k^{\a+q(\a \b-\a-1)}.
	\end{equation*}
   The right-hand side is infinite if and only if $\b \geq (1+1/{\a})(1-1/{q})$, which gives the desired conclusion.
    \end{proof}

\subsection{Proof of Theorem \ref{main theorem}, part (a)}

Let $r \leq s$ be such that the point $(1/r, 1/s)$ is below the curve $\Gamma_q$, given by \eqref{gamma q}. 
This implies that either one of the following conditions holds:
   \begin{equation}
   \label{case two for part a}
   \frac{1}{r} + \frac{3}{s} > 1 \quad \text{and} \quad \frac{1}{r} + \frac{1}{s} < \frac{q}{2(q-1)},
   \end{equation}
   or
 \begin{equation}
\label{case one for part a}
   \frac{1}{r} + \frac{3}{s} \leq 1 \quad \text{and} \quad \frac{3q-2}{q+2}\cdot \frac{1}{r} + \frac{1}{s} < 1.
   \end{equation}  
    Moreover, since $G(g,1,1)$ is exact, condition
    \eqref{e:bl cond for exact, c-d} implies that in both of these cases we also have
   \begin{equation}
   \label{e:3r+1s>11}
    \frac{3}{r}+\frac{1}{s}>1.
    \end{equation}

    To arrive at a contradiction, we assume that the integrals in \eqref{e:bl cond r-s} converge. 
   Since both conditions  \eqref{case two for part a} and \eqref{case one for part a} imply that 
   the numbers $r$ and $s$ satisfy the inequality $1/r+1/s<1$, it follows 
   by Lemma \ref{prelim: zero of zac} that there exists a point $(a,b)\in Q$ such that $Zg(a,b) = 0$. 
    Therefore Lemma \ref{zak lemma} implies the estimate
    \begin{equation}\label{e:estimate for zacs size :-)}
        \abs{Zg(x,y) }^2 \leq C\Big( \phi_{r,s}(x-a) + \phi_{s,r}(y-b) \Big), \qquad (x,y) \in Q,
    \end{equation}
    where the functions $\phi_{r,s}$ are defined in \eqref{e:who is phi}. Note that the value of $\phi_{r,s}(x-a)$ is determined 
    by the inequality \eqref{e:3r+1s>11}, while the value of $\phi_{s,r}(y-b)$ is determined by the  left-hand inequality in 
    either \eqref{case two for part a} or \eqref{case one for part a}, depending on the case.

    By Lemma \ref{l:basic lemma, function version}$(b)$,  a contradiction is obtained if we find a function $h$ that
   satisfies
        \begin{equation} \label{point of main proof 1}
          h \in L^2_{\abs{Zg}}(Q)
        \end{equation}
        and
        \begin{equation} \label{point of main proof 2}
        h \in L^1(Q) \quad \text{with}  \quad \sum_{m,n \in \Z} \abs{\hat{h}(m,n)}^q = \infty.
        \end{equation}
     Roughly speaking, we construct a function $h$ that has a single singularity at the point $(a,b)$, and on the one 
    side, grows fast enough near this singularity for condition \eqref{point of main proof 2} to hold, 
    while on the other side, it grows slowly enough for condition \eqref{point of main proof 1} to follow from
     \eqref{e:estimate for zacs size :-)}. In fact, its size is essentially smaller than some power of $1/|Zg|^2$.

    Given $\alpha, \beta >0$ let $\tilde{f}_{\alpha,\beta}$ be the $1$-periodic extension of the function \eqref{e: def of f, a,b} to $\R^2$. Set 
    \begin{equation}
        \label{e:def of h_a,b}
        h_{\a,\b}(x,y) = \tilde{f}_{\alpha,\beta}\Big(x-a + \frac{1}{2}, y - b + \frac{1}{2} \Big).
    \end{equation}
    From \eqref{e:growth of f_a,b}, we have
    \begin{equation}
        \label{e:growth of h_a,b}
        |h_{\a,\b}(x,y)|^2\leq
        \frac{C}{\left(|x-a|^{2\a}+|y-b|^2\right)^{\b}}, \qquad \forall (x,y) \in Q.
    \end{equation}
    In the remainder of the proof, 
    we determine suitable values for the parameters $\alpha,\beta$ to ensure that 
    \eqref{point of main proof 1} and \eqref{point of main proof 2} hold for $h_{\a,\b}$.

    First, we assume that \eqref{case two for part a}
    holds. The condition
    ${1}/{r}+{1}/{s}<{q}/{(2(q-1))}$ implies that there
    exists a number $\l$ which satisfies
    \begin{equation}
    \label{e:cond on l}
    \frac{2(q-1)}{q}\cdot
    \left(\frac{1}{r}+\frac{1}{s}\right)<\l<1.
    \end{equation}
Choose such a $\l$ and define $h_{\alpha,\beta}$ as in \eqref{e:def of h_a,b} with
\begin{equation}
\label{e:who are a and b first case}
\a=\frac{r}{s} \qquad \textrm{and} \qquad \b=\frac{s\l}{2}.
\end{equation}
    Since $r\leq s$, the left inequality in \eqref{case two for part a} implies that 
        ${1}/{r}+{1}/{s}>{1}/{2}$. Combining this with \eqref{e:cond on l} and \eqref{e:who are a and b first case}, we have
    \begin{equation*}
        \left( 1 - \frac{1}{q} \right) \left( 1 + \frac{1}{\alpha} \right) \leq \beta < \left( 1 + \frac{1}{\alpha} \right).
    \end{equation*}
    So,  by Lemma \ref{coefficient lemma} the function $h_{\alpha,\beta}$
    satisfies \eqref{point of main proof 2}.
To  show that it satisfies \eqref{point of main proof 1}, we first note that, in this case, the inequality \eqref{e:estimate for zacs size :-)} takes the form
    \begin{equation*}
        \abs{Zg(x,y)}^2 \leq \abs{x-a}^{2 + r(1 - \frac{3}{r} - \frac{1}{s} )}
        + \abs{y-b}^{ 2 + s(1 - \frac{1}{r} - \frac{3}{s}) }.
    \end{equation*}
    Combining this with \eqref{e:growth of h_a,b} and \eqref{e:who are a and b first case}, we use \eqref{secret trick} to get
    \begin{multline*}
        \iint_Q  \abs{{h}_{\alpha,\beta}(x,y)}^2 \abs{Zg(x,y)}^2 \dif x \dif y
        \leq
        C \iint_Q \left( \abs{x-a}^{\frac{r}{s}}
        + \abs{y-b}\right)^{\left[2 + s(1 - \frac{1}{r} -
        \frac{3}{s})-s\l \right]}\dif x \dif y.
    \end{multline*}
    The integral on the right-hand side is finite if and only if
    $\l<1$, so \eqref{point of main proof 1} follows from \eqref{e:cond on
    l}.

    We now assume that \eqref{case one for part a} holds.
    Let
    \begin{equation}
    \label{e:who is a, second case}
  \alpha = 1 - \frac{r}{2} \left(  \frac{3}{r} + \frac{1}{s}-1 \right),
    \end{equation}
    and note that $0<\alpha < 1$.
    Next, the condition $\frac{3q-2}{q+2}\cdot \frac{1}{r}+\frac{1}{s} < 1$
    implies that there exists a number $\b$ for which
    \begin{equation}
    \label{e:who is b secon case}
\left(1 - \frac{1}{q} \right) \left( 1 + \frac{1}{\alpha} \right)
\leq \beta <\frac{3}{2} +  \frac{1}{2\alpha}.
    \end{equation}
Choose such a $\b$ and  let $h_{\a,\b}$ be the function
defined in \eqref{e:def of h_a,b}. Since $0<\alpha<1$,
\begin{equation*}
     \frac{3}{2} + \frac{1}{2\alpha} < 1 + \frac{1}{\alpha},
\end{equation*}
and so it follows from \eqref{e:who is b secon case} and Lemma \ref{coefficient lemma} that
\eqref{point of main proof 2} holds.
To check that \eqref{point of main proof 1} holds, 
we first note that, in this case, inequality \eqref{e:estimate for zacs size :-)} takes the form
    \begin{align*}
        \abs{Zg(x,y)}^2
        &\leq
       C\left( \abs{x-a}^{2\a} + \abs{y-b}^2 \log \left( 1+ \frac{1}{|y-b|}\right) \right)
        \\
        &\leq
        C \left(\abs{x-a}^{2\a} + \abs{y-b}^2\right) \log\left(1+ \frac{1}{|y-b|}\right).
    \end{align*}
Combining  this with the estimate in \eqref{e:growth of h_a,b}, and making an appropriate change of variables, we find that
\begin{equation*}
        \iint_Q \abs{h_{\a,\b}(x,y)}^2 \abs{Zg(x,y)}^2 \dif x \dif y
        \leq
        C \iint_Q \left(x^{2\a}+ y^2\right)^{1-\b} \log\left(1+ \frac{1}{|y|}\right)\dif x \dif y.
\end{equation*}
    We use \eqref{secret trick} to ensure that the last integral is
    smaller than
    \begin{equation*}
        C \iint_Q \left(x
        + y^{\frac{1}{\a}}\right)^{2\a(1-\b)} \log\left(1+ \frac{1}{|y|}\right)\dif x \dif y.
    \end{equation*}
This integral is finite if $\beta
<{3}/{2} +  {1}/{2\alpha}$. Hence,
\eqref{point of main proof 1}
follows from the inequality \eqref{e:who is b secon case}.

    For $s=\infty$, use Remark \ref{phi remark} and repeat the previous argument.
\qed

 \section{Two families of functions} \label{building blocks section}
We introduce two families of functions that are used in the next Section  to prove part $(b)$ of Theorem \ref{main result}. 
 The needed estimates are given in lemmas \ref{part2_modulus lemma} and \ref{part2_full lemma}, where we measure the smoothness of these functions near the origin.


\subsection{Building blocks for the modulus}
   Fix $a>0$. Given $\alpha,\beta,\gamma >0$, set
    \begin{equation} \label{part2_definition of f}
        f_{\alpha,\beta,\gamma}(x,y) := \left\{ \begin{array}{cc} \big(x^{{\alpha}/{\gamma} } +
        \abs{y}^{{\beta}/{\gamma}}\big)^\gamma & \quad \text{for} \; x \geq 0, \\ \big((-ax)^{{\alpha}/{\gamma} } +
        \abs{y}^{{\beta}/{\gamma}}\big)^\gamma & \quad \text{for} \; x < 0. \end{array} \right.
    \end{equation}
   The following lemma is easily proved by induction.
    \begin{lemma} \label{part2_basic lemma}
    Let $\alpha,\beta>0$ and $k\in \N$.
    If $0<\gamma < \min\{ \alpha/k,\beta/k, 1\}$, then $f_{\alpha,\beta,\gamma} \in C^k(\R^2 \backslash \{ (0,0)\})$. Moreover, for any
     $(x,y) \neq (0,0)$, the partial derivative
     $(f_{\alpha,\beta,\gamma})_x^{(k)}(x,y)$ equals
        \begin{equation}
        \label{part2_e;derivative of f_abc}
                        \left \{ \begin{array}{cc} \sum_{m=1}^k C_{m,k} (x^{\alpha/\gamma} + \abs{y}^{\beta/\gamma})^
            {\gamma-m} x^{m \frac{\alpha}{\gamma} - k} & \quad x \geq 0, \\ (-a)^k \sum_{m=1}^k C_{m,k}
            \Big( (-ax)^{\alpha/\gamma} + \abs{y}^{\beta/\gamma} \Big)^{\gamma-m}  (-ax)^{m \frac{\alpha}{\gamma} - k}
            & \quad x < 0, \end{array}  \right.
        \end{equation}
        where $ C_{m,k}$ are constants not depending on $(x,y)$.
    \end{lemma}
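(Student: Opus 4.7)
The plan is to proceed by induction on $k$, computing the partial derivatives on each open half-plane $\{x > 0\}$ and $\{x < 0\}$ separately, and then verifying continuity across the coordinate axes using the explicit formula.

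For the base case $k=1$ and $x > 0$, the chain rule yields
\[
\partial_x f_{\alpha,\beta,\gamma}(x,y)
= \gamma\bigl(x^{\alpha/\gamma} + \abs{y}^{\beta/\gamma}\bigr)^{\gamma-1}\cdot \tfrac{\alpha}{\gamma}\, x^{\alpha/\gamma-1}
= \alpha\bigl(x^{\alpha/\gamma} + \abs{y}^{\beta/\gamma}\bigr)^{\gamma-1}x^{\alpha/\gamma-1},
\]
matching \eqref{part2_e;derivative of f_abc} with $C_{1,1}=\alpha$. The case $x<0$ is handled identically, and the chain rule applied to $x\mapsto -ax$ produces the claimed factor $-a$.

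For the inductive step, assuming the formula at level $k$, I would differentiate each summand $(x^{\alpha/\gamma}+\abs{y}^{\beta/\gamma})^{\gamma-m}x^{m\alpha/\gamma-k}$ once more in $x$ via the product rule. Each such term splits into two: one obtained by differentiating the outer power, which produces a new factor $x^{\alpha/\gamma-1}$ and promotes $m\mapsto m+1$ with exponent $(m+1)\alpha/\gamma-(k+1)$ on $x$; and one obtained by differentiating $x^{m\alpha/\gamma-k}$, which keeps $m$ unchanged and brings the exponent on $x$ down to $m\alpha/\gamma-(k+1)$. After reindexing the first family and collecting, the result assumes the form $\sum_{m=1}^{k+1}C_{m,k+1}(x^{\alpha/\gamma}+\abs{y}^{\beta/\gamma})^{\gamma-m}x^{m\alpha/\gamma-(k+1)}$, which is precisely \eqref{part2_e;derivative of f_abc} at level $k+1$. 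The $x<0$ half-plane yields the same formal expression with $x$ replaced by $-ax$ and an extra factor $(-a)^{k+1}$ from the chain rule.

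It remains to verify that $f_{\alpha,\beta,\gamma}\in C^k(\R^2\setminus\{(0,0)\})$. Away from both axes the function is $C^\infty$ because the base $x^{\alpha/\gamma}+\abs{y}^{\beta/\gamma}$ is strictly positive and each factor is smooth. Across $\{y=0\}$ with $x\neq 0$, the only delicate factor is $\abs{y}^{\beta/\gamma}$, which is $C^k$ at $y=0$ precisely because $\beta/\gamma>k$. Across $\{x=0\}$ with $y\neq 0$, the explicit formula \eqref{part2_e;derivative of f_abc} is decisive: every summand carries a factor $x^{m\alpha/\gamma-k}$ on the right (respectively $(-ax)^{m\alpha/\gamma-k}$ on the left) whose exponent $m\alpha/\gamma-k$ is strictly positive for every $m\geq 1$, since $\alpha/\gamma>k$; hence both one-sided limits vanish and the $k$-th derivatives agree across the axis. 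Mixed partial derivatives are handled by the same bookkeeping.

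The proof is essentially a routine induction, and there is no real obstacle beyond the bookkeeping of the two families generated at each differentiation. The hypotheses $\gamma<\alpha/k$ and $\gamma<\beta/k$ are used precisely to guarantee the positivity of the relevant exponents at the axes; the auxiliary condition $\gamma<1$ is not needed in this lemma per se but ensures that $\gamma-m<0$ for every $m\geq 1$, giving the constants $C_{m,k}$ a uniform sign pattern that will be convenient when \eqref{part2_e;derivative of f_abc} is applied in Lemma \ref{part2_full lemma}.
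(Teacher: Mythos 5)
Your proof is correct and takes the same approach the paper indicates (the paper only states that the lemma ``is easily proved by induction'' and omits all details). Your induction on $k$, the splitting of each summand into the two families under the product rule, the verification that the exponents $m\alpha/\gamma-k$ are strictly positive for $m\ge 1$ (giving continuity of the $k$-th derivative across $\{x=0\}$), and the observation that $\beta/\gamma>k$ handles the $\{y=0\}$ axis are all accurate; the side remark that $\gamma<1$ is not actually used in the lemma itself is also correct.
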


An explicit estimate for the smoothness of the functions
$f_{\alpha,\beta,\gamma}$ near the origin is given in the
following lemma. Recall that $\Delta_h$ and $\Gamma_h$ are defined in Section \ref{smoothness and zak subsection}.

\begin{lemma} \label{part2_modulus lemma}
   Let $\alpha, \beta> 0$ and $k \in \N$ be such that $2\alpha + \alpha/\beta + 1 \leq 2k$.
   If $\gamma <  \alpha/k$, then for any $\epsilon>0$ we
   have
    \begin{equation*}
        \int_{-b}^b \iint_{[-c,c]^2} \frac{\abs{\Delta_h^k f_{\alpha,\beta,\gamma}(x,y)}^2}{\abs{h}^
        {2\alpha + \alpha/\beta + 2  - \epsilon}} \dif x \dif y \dif h  < \infty,
    \end{equation*}
    where $b,c$ are any two positive numbers.
\end{lemma}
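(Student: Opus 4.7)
The plan is to exploit a natural scaling symmetry of $f_{\alpha,\beta,\gamma}$. Writing $|x|_a := x$ for $x\geq 0$ and $|x|_a := -ax$ for $x<0$, one has the factorisation
\[
f_{\alpha,\beta,\gamma}(x,y) = |y|^\beta F\bigl(x/|y|^{\beta/\alpha}\bigr), \qquad F(t) := \bigl(|t|_a^{\alpha/\gamma} + 1\bigr)^\gamma,
\]
so that $\Delta_h^k f_{\alpha,\beta,\gamma}(x,y) = |y|^\beta \, \tau_{\tilde h}^k F(\tilde x)$ with $\tilde x := x/|y|^{\beta/\alpha}$ and $\tilde h := h/|y|^{\beta/\alpha}$. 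Substituting $(\tilde x,\tilde h)$ for $(x,h)$ and tracking the Jacobians together with the weight $|h|^{-(2\alpha + \alpha/\beta + 2 - \epsilon)}$, the exponent on $|y|$ collapses to exactly $-1 + \epsilon\beta/\alpha$. The target integral therefore takes the form $\int_{-c}^c |y|^{-1+\epsilon\beta/\alpha}\, H(y)\, dy$, where $H(y)$ is the analogous integral in the scaled variables, taken over the rectangle $|\tilde x| \leq c |y|^{-\beta/\alpha}$, $|\tilde h| \leq b |y|^{-\beta/\alpha}$. Since $\epsilon>0$, the prefactor $|y|^{-1+\epsilon\beta/\alpha}$ is already integrable at $y=0$, so the entire task reduces to controlling how $H(y)$ grows as $|y|\to 0$.

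To bound $H(y)$, I would first verify that under the assumption $\gamma<\alpha/k$ the profile $F$ lies in $C^k(\R)$ with $F^{(j)}(0)=0$ for $j=1,\dots,k$; this is immediate from the formula in Lemma \ref{part2_basic lemma} since every exponent $m\alpha/\gamma - j$ appearing there is strictly positive when $m\geq 1$ and $j\leq k$. Hence Lemma \ref{part2_basic basic lemma}$(c)$ applies on \emph{any} interval, giving
\[
|\tau_{\tilde h}^k F(\tilde x)| \leq |\tilde h|^k \sup_{\xi \in [\tilde x,\tilde x + k\tilde h]} |F^{(k)}(\xi)|.
\]
From the same formula one extracts the two-regime estimate $|F^{(k)}(\xi)|^2 \leq C|\xi|^{2(\alpha/\gamma - k)}$ for $|\xi|\leq 1$ and $|F^{(k)}(\xi)|^2 \leq C|\xi|^{2(\alpha-k)}$ for $|\xi|\geq 1$; in the large-$|\tilde h|$ regime one complements this with the cruder bound $|F(t)|^2 \leq C(1+|t|^{2\alpha})$, leading to $|\tau_{\tilde h}^k F|^2 \leq C|\tilde h|^{2\alpha}$. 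Splitting the $(\tilde x,\tilde h)$-domain according to which of these regimes applies (and handling the sign combinations symmetrically, which is possible thanks to the structure built into $|\cdot|_a$), and performing the routine exponent calculations, each piece of the dissection contributes at most $C(1+R(y)^{\epsilon - \alpha/\beta})$, where $R(y)=c|y|^{-\beta/\alpha}$. Multiplied by $|y|^{-1+\epsilon\beta/\alpha}$, this produces in every case an integrand bounded by a power of $|y|$ with exponent strictly greater than $-1$, so the outer $y$-integral converges.

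The main technical obstacle is the exponent balancing in the region where the interval $[\tilde x, \tilde x + k\tilde h]$ straddles the origin. There the cancellation encoded in $F^{(j)}(0)=0$ is essential: the naive bound $|\tau_{\tilde h}^k F| \leq C \max_j |F(\tilde x + j\tilde h)|$ ignores this cancellation and would already make the inner integral diverge as $|\tilde h|\to 0$. Using the MVT bound together with the sharp small-$\xi$ estimate $|F^{(k)}(\xi)|^2 \leq C |\xi|^{2(\alpha/\gamma - k)}$ upgrades this to $|\tau_{\tilde h}^k F|^2 \leq C |\tilde h|^{2\alpha/\gamma}$, and the required integrability at $\tilde h=0$ then hinges on the inequality $2\alpha/\gamma + 1 > 2\alpha + \alpha/\beta + 1 - \epsilon$, which is precisely what the pair of hypotheses $\gamma<\alpha/k$ and $2\alpha + \alpha/\beta + 1 \leq 2k$ guarantees (the $\epsilon>0$ slack absorbs the boundary case). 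The interplay of the two hypotheses at this single step is what makes the lemma valid on exactly the announced range of parameters.
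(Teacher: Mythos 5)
Your approach is genuinely different from the paper's and, after checking the key computations, it is sound. The paper works directly with the two-variable function: it reduces (by evenness in $y$) to estimating $\int_0^c\int_{-c}^c\bigl(|\Delta_h^k f|^2+|\Delta_{-h}^k f|^2\bigr)\,dx\,dy\leq C h^{2\alpha+\alpha/\beta+1}$, then partitions $[-c,c]\times[0,c]$ into four explicit regions $V_1,\dots,V_4$ depending on $h$, and bounds each via the crude bound (Remark \ref{part2_delta and gamma remark}$(a)$) on $V_1$ and the mean-value bound on the rest. You instead exploit the homogeneity $f_{\alpha,\beta,\gamma}(x,y)=|y|^\beta F(x/|y|^{\beta/\alpha})$ to push everything onto a one-variable profile $F$; the Jacobian bookkeeping you sketch does reduce the $y$-weight to $|y|^{-1+\epsilon\beta/\alpha}$, which is integrable, and the needed inner bound $H(y)\lesssim 1+R(y)^{\epsilon-\alpha/\beta}$ follows from exactly the regime dichotomy you describe (the origin-straddling case giving $|\tau_{\tilde h}^k F|^2\lesssim|\tilde h|^{2\alpha/\gamma}$, and the large-$\tilde h$ case giving the borderline power $|\tilde h|^{-1-\alpha/\beta+\epsilon}$ that produces the $R(y)^{\epsilon-\alpha/\beta}$ term, which cancels exactly against the $|y|$-prefactor). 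Your argument has two small advantages: it produces a cleanly one-dimensional case analysis, and it avoids the extraneous hypothesis $\gamma<\beta/k$ that the paper imposes ``to simplify formulations'' in order to invoke Lemma \ref{part2_basic lemma} in two variables — your profile $F$ is $C^k$ under $\gamma<\alpha/k$ alone. What you lose relative to the paper is the clean pointwise-in-$h$ bound $\lesssim h^{2\alpha+\alpha/\beta+1}$, since the scaling entangles $h$ and $y$; the paper re-uses that sharper form when estimating over $V_1$ in the proof of Lemma \ref{part2_full lemma}, whereas with your route one would have to redo the scaling argument there. Two cosmetic points worth fixing if you write this up: your throwaway line that ``the $\epsilon>0$ slack absorbs the boundary case'' is not needed — $\gamma<\alpha/k$ already gives $2\alpha/\gamma>2k\geq 2\alpha+\alpha/\beta+1$ strictly, so no slack is consumed at that step; and the exponent claim for the large-$\tilde h$ region needs to be stated for $|\tilde x|\lesssim|\tilde h|$ only, with the complementary region $|\tilde x|\gg|\tilde h|$ handled by the mean-value bound together with $|F^{(k)}(\xi)|^2\lesssim|\xi|^{2(\alpha-k)}$, as you in fact do implicitly.
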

\begin{proof}
    Set $f = f_{\alpha,\beta,\gamma}$. To simplify formulations, we make the assumption $\gamma <  \beta/k$
    so that Lemma \ref{part2_basic lemma} can be applied.
    Otherwise, a relaxed version of it, where the
    function $f$ is not necessarily differentiable, can be used. However, in what follows, this extraneous
    condition holds whenever we refer to Lemma \ref{part2_modulus
    lemma}.

    In the  above integral the integrand is even in $y$, so it is enough to show that for $ h > 0$,  
    \begin{equation}\label{part2_e: what we need to show in lemma3}
     \int_0^c \int_{-c}^c \Big( \abs
    {\Delta_h^k f(x,y)}^2 + \abs{\Delta_{-h}^k f(x,y) }
        ^2 \Big) \dif x \dif y \leq C h^{2\alpha + \alpha/\beta + 1},
    \end{equation}
    where $C=C(f,k)$ does not depend on $h$.
   Since $f$ is bounded on any compact set, we may assume that $h$ is small enough for the following partition\footnote{To simplify formulations, here and in the following, we allow members of a partition to have intersections of measure zero.}
   to hold:
    \begin{equation*}
        [-c,c] \times [0,c] = V_1 \cup V_2 \cup V_3 \cup V_4,
    \end{equation*}
    where (see Figure \ref{part2_V figure1})
    {\begin{align*}
        V_1 & =  [-(k+1)h, (k+1)h] \times [0, ((k+1)h)^{\alpha/\beta}], \\
        V_2 & =  [-(k+1)h, (k+1)h] \times [((k+1)h)^{\alpha/\beta}, c], \\
        V_3 & =  [(k+1)h,c] \times [0,c], \\
        V_4 & =  [-c, -(k+1)h] \times [0,c].
    \end{align*}}

    \begin{figure}
        \includegraphics{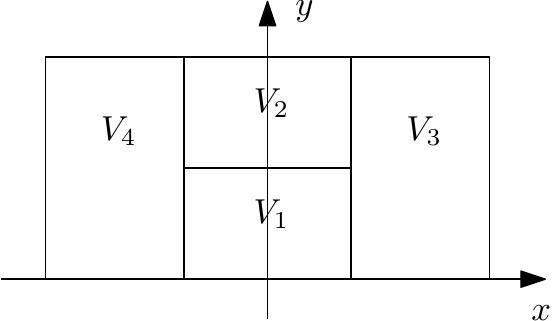}
        \caption{The partition of $[-c,c] \times [0,c]$   in the proof of Lemma \ref{part2_modulus lemma}.}
       \label{part2_V figure1}
    \end{figure}

   To estimate the integral in (\ref{part2_e: what we need to show in lemma3}) over $V_1$,
   we use Remark \ref{part2_delta and gamma remark}$(a)$ and the inequality \eqref{secret trick} to find
   that it is smaller than some constant times 
      \begin{align*}
         \sum_{j=-k}^k \iint_{V_1} \abs{f(x+jh,y)}^2 \dif x \dif y
        &\leq
        C \int_0^{((k+1)h)^{\alpha/\beta}} \int_{-(2k+1)h}^{(2k+1)h} \abs{f(x,y)}^2 \dif x \dif
        y  \\
       & \leq C\int_0^{((k+1)h)^{\alpha/\beta}} \int_0^{(2k+1)h} \Big( x^{2\alpha} + y^{2\beta} \Big) \dif x \dif
       y\\
        & \leq C h^{2\alpha + \alpha/\beta + 1}.
\end{align*}
    For the estimate over the remaining parts, we note that $\gamma<\min\{\alpha/k,\beta/k\} $ implies $\gamma < 1$, and
    so it follows from 
     Lemma \ref{part2_basic lemma} and Remark \ref{part2_delta and gamma remark}$(c)$
	that
    \begin{equation*}
        \abs{\Delta_h^k f(x,y)} + \abs{\Delta_{-h}^k f(x,y)} \leq  \abs{h}^{k}
        \underbrace{\sup_{\xi \in [x-kh,x+kh]} \abs{f_x^{(k)}(\xi,y)}}_{:=
        \Omega(x,y)}.
    \end{equation*}
   Hence, to complete the proof we need to show that
    \begin{equation*}
    I_2+I_3+I_4:=    \iint_{V_2 \cup V_3 \cup V_4} \Omega(x,y)^2
    \dif x \dif y \leq C h^{2\alpha + \alpha/\beta + 1 - 2k}.
    \end{equation*}
We do so by estimating the partial derivatives given in
(\ref{part2_e;derivative of f_abc}). For $(x,y) \in V_2$ we have
    \begin{equation*}
         \Omega(x,y) \leq C \sum_{m=1}^k y^{\frac{\beta}{\gamma} (\gamma-m) } h^{m \frac{\alpha}{\gamma} - k }\leq
         C y^{\beta-k\beta/\alpha}.
    \end{equation*}
  So,
    \begin{equation*}
        I_2 \leq Ch(1+h^{2\alpha + \alpha/\beta - 2k})\leq  C h^{2\alpha +
\alpha/\beta + 1 - 2k},
    \end{equation*}
    whenever $h$ is small enough.

To estimate $I_3$, we first note that if $(x,y) \in V_3$, then
    \begin{equation*}
          \Omega(x,y) \leq
        \sup_{\xi \in [x/(k+1), 2x]}   \abs{f_x^{(k)}(\xi,y)} \leq
        C \sum_{m=1}^k (x^{\alpha/\gamma} + y^{\beta/\gamma})^{\gamma-m} x^{m \frac{\alpha}{\gamma} - k}.
    \end{equation*}
 We apply \eqref{secret trick} to the $m$'th
term of this sum
    and find that the corresponding integral is less than some constant times
    \begin{align*}
        \int_{(k+1)h}^c \int_0^c (x^{\alpha/\beta} + y)^{2 \frac{\beta}{\gamma} (\gamma-m) } x^{2 (m \frac{\alpha}{\gamma} - k)} \dif y \dif x
        &\leq
        C\left( 1  + \int_{(k+1)h}^c x^{2\alpha + \alpha/\beta - 2k} \dif x \right) \\
        &\leq  C h^{2\alpha + \alpha/\beta + 1 - 2k},
    \end{align*}
   whenever $h$ is small enough. This implies the required
   estimate for $I_3$.
   In the same way one can show the required estimate for $I_4$, which completes the proof.
\end{proof}


\subsection{Building blocks for the argument} \label{part2_kokkeliko}
Let $\phi\in C^\infty(\R)$ be a function satisfying $-1 \leq
\phi(x) \leq 0$ for all $x \in \R$, and for which
\begin{equation*}
     \phi(x) = \left\{ \begin{array}{cc} -1 & \quad x \in (-\infty,0], \\
     0 & \quad [1,\infty). \end{array} \right.
\end{equation*}
Given $\lambda>0$, denote
\begin{equation} \label{part2_definition of H}
     H_{\lambda}(x,y) = \left\{ \begin{array}{cl}
     \phi\big(\frac{y}{x^{\lambda}} \big) & \quad \text{for} \quad  x \geq 0, \quad \text{and} \quad  0 \leq y \leq x^{\lambda}, \\
    0 & \quad  \mathrm{otherwise}.
     \end{array}  \right.
\end{equation}
Such a function was first introduced in \cite{benedetto_czaja_gadzinski_powell2003}.
   For $\alpha,\beta,\gamma>0$, set
\begin{equation} \label{part2_definition of F}
     F_{\alpha,\beta,\gamma}(x,y) = f_{\alpha,\beta,\gamma}(x,y) \e^{2\pi \im
     H_{\frac{\alpha}{\beta}}(x,y)},
\end{equation}
where the functions $f_{\alpha,\beta,\gamma}$ are defined in \eqref{part2_definition of f}.
The following lemma, combined with Lemma \ref{part2_basic lemma},
provides a preliminary estimate for the smoothness of the
functions $F_{\alpha,\beta,\gamma}$. These estimates are easily
obtained by an   inductive process.

\begin{lemma} \label{part2_derivatives of H}
   Let $\lambda>0$. The function $\e^{2\pi \im H_\lambda(x,y)}$ belongs to $C^{\infty}(\R^2\setminus \{0,0\})$. Moreover,
   for any $(x,y)\neq (0,0)$ and $n \in \N$,
    we have.
    \begin{align*}
         (a) && \abs{(\e^{2\pi \im H_\lambda})_x^{(n)}(x,y)} &\leq \left\{  \begin{array}{cc} C y
         x^{-\lambda -n} & \quad 0<x , \; 0<y<x^{\lambda}, \\
         0 & \mathrm{otherwise}, \end{array}  \right. \\
         (b) && \abs{(\e^{2\pi \im H_\lambda})_y^{(n)}(x,y)} &\leq \left\{  \begin{array}{cc}
         C x^{-n\lambda } & \quad 0<x, \; 0<y<x^{\lambda}, \\
         0 & \mathrm{otherwise}, \end{array}  \right.
    \end{align*}
     where $C=C(n,\lambda)$ does not depend on $x$ and $y$.
\end{lemma}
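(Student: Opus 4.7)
The plan is to set $u = y/x^\lambda$ on the region $\Omega = \{(x,y) : x > 0,\ 0 < y < x^\lambda\}$, where $H_\lambda = \phi(u)$, while $H_\lambda \equiv 0$ off $\overline{\Omega}$. Hence $e^{2\pi \im H_\lambda}$ is identically $1$ outside $\overline{\Omega}$ and $C^\infty$ inside $\Omega$ by composition; the only nontrivial smoothness check is across the arcs $\{x > 0,\ y = 0\}$ and $\{x > 0,\ y = x^\lambda\}$ (the half-axis $\{x = 0,\ y \neq 0\}$ has a neighborhood outside $\Omega$, hence is trivial). Since $\phi$ is constant on $(-\infty,0]$ and on $[1,\infty)$, we have $\phi^{(k)}(0) = \phi^{(k)}(1) = 0$ for every $k \geq 1$. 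Every partial derivative of $H_\lambda$ of order $\geq 1$ is a finite sum of terms each involving some factor $\phi^{(j)}(u)$ with $j \geq 1$, so it tends to zero as $u \to 0^+$ or $u \to 1^-$, matching the zero derivatives from outside. Thus $e^{2\pi \im H_\lambda} \in C^\infty(\R^2 \setminus \{(0,0)\})$.

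For part $(b)$, inside $\Omega$ one has $\partial_y u = x^{-\lambda}$ independent of $y$, so $\partial_y^k H_\lambda = \phi^{(k)}(u)\, x^{-k\lambda}$, which is bounded by $C x^{-k\lambda}$. Faà di Bruno applied to the outer exponential writes
\[
    \partial_y^n e^{2\pi \im H_\lambda} = e^{2\pi \im H_\lambda} \sum_{\vec m} C_{\vec m} \prod_j \bigl(\partial_y^j H_\lambda\bigr)^{m_j},
\]
the sum being over partitions with $\sum_j j m_j = n$. Each product is bounded by $\prod_j (x^{-j\lambda})^{m_j} = x^{-n\lambda}$, which gives $(b)$.

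For part $(a)$, one extra layer is needed. Since $\partial_x^k u = c_k\, y\, x^{-\lambda-k}$ with explicit constants $c_k$, a first application of Faà di Bruno to $\phi(u)$ expresses $\partial_x^k H_\lambda$ as a finite sum of terms $\phi^{(M)}(u) \prod_j (\partial_x^j u)^{m_j}$ with $M = \sum_j m_j$ and $\sum_j j m_j = k$. Each such term is bounded by $C\, y^M\, x^{-\lambda M - k}$, and the homogeneity $y \leq x^\lambda$ valid in $\Omega$ collapses this via $y^M \leq y\, x^{\lambda(M-1)}$ (for $M \geq 1$) to $C\, y\, x^{-\lambda-k}$. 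A second application of Faà di Bruno, now to the exponential, writes $\partial_x^n e^{2\pi \im H_\lambda}$ as a sum of terms $\prod_j (\partial_x^j H_\lambda)^{m_j}$, each bounded by $y^M x^{-\lambda M - n}$, and the same inequality gives the final bound $C\, y\, x^{-\lambda - n}$. The main obstacle is purely bookkeeping: tracking the two-layer Faà di Bruno expansion cleanly. This is why, as the author hints, the lemma is best organized as a direct induction on $n$ maintaining the invariant that $(e^{2\pi \im H_\lambda})_x^{(n)}$ equals $e^{2\pi \im H_\lambda}$ times a polynomial in $x$-derivatives of $H_\lambda$ of total weighted order $n$, so that the inequality $y \leq x^\lambda$ can be applied uniformly at each step.
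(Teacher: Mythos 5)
Your proof is correct, and it aligns with the approach the paper itself merely hints at (``these estimates are easily obtained by an inductive process''); the two-layer Fa\`a di Bruno argument you give is the systematic version of that induction, and the key step --- using $y^M \leq y\,x^{\lambda(M-1)}$ on $\Omega$ to collapse the homogeneous degree --- is exactly right. One small point of care: when arguing the $C^\infty$-smoothness across $\{x>0,\ y=0\}$, the function $H_\lambda$ itself is discontinuous there (it jumps from $-1$ to $0$), so the matching really takes place at the level of $\e^{2\pi\im H_\lambda}$, whose value is $1$ on both sides and whose derivatives vanish as $u\to 0^+$ because every term carries a factor $\phi^{(j)}(u)$ with $j\geq 1$; you clearly have this in mind, but the phrasing ``matching the zero derivatives from outside'' applied to $H_\lambda$ alone would be misleading.
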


An explicit estimate for the smoothness
of the functions $F_{\alpha,\beta,\gamma}$ near the origin is given in the following lemma.
\begin{lemma} \label{part2_full lemma}
    Let $\alpha, \beta> 0$ and $k \in \N$ be such that both $2\alpha + \alpha/\beta + 1 \leq 2k$ and
    $2\beta + \beta/\alpha + 1 \leq 2k$. If $\gamma < \min \{ \alpha/k, \beta/k \}$, then for any $\epsilon>0$  we have
    \begin{align*}
         (a)&& \int_{-b}^b  \iint_{[-c,c]^2} \frac{\abs{\Delta_h^k F_{\alpha,\beta,\gamma}(x,y)}^2}
         {\abs{h}^{2\alpha + \alpha/\beta + 2 - \epsilon}} \dif x \dif y \dif h < \infty,\\
         (b)&& \int_{-b}^b   \iint_{[-c,c]^2} \frac{\abs{\Gamma_h^k F_{\alpha,\beta,\gamma}
         (x,y)}^2}{\abs{h}^{2\beta + \beta/\alpha + 2 - \epsilon}} \dif x \dif y \dif h < \infty,
    \end{align*}
    where $b,c$ are any two positive numbers.
\end{lemma}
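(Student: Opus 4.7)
The approach is to reduce both parts to Lemma \ref{part2_modulus lemma} by expanding the relevant difference operator via the Leibniz-type identity of Remark \ref{part2_delta and gamma remark}(b). I carry this out for part $(a)$; part $(b)$ follows symmetrically, interchanging the roles of $x$ and $y$ and using Lemma \ref{part2_derivatives of H}(b) in place of (a). Write $F = f \cdot G$, where $f = f_{\alpha,\beta,\gamma}$ and $G = \e^{2\pi \im H_{\alpha/\beta}}$, so that
\begin{equation*}
\Delta_h^k F(x,y) \;=\; \sum_{j=0}^k \binom{k}{j}\, \Delta_h^j f(x,y)\, \Delta_h^{k-j} G(x+jh,\, y).
\end{equation*}
The term $j = k$ contributes $\Delta_h^k f \cdot G(x+kh,y)$; since $|G| \leq 1$, integrating its square against $|h|^{-(2\alpha + \alpha/\beta + 2 - \epsilon)}$ gives a finite quantity directly by Lemma \ref{part2_modulus lemma}. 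The task thus reduces to handling the mixed terms $0 \leq j < k$.

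For each mixed term, I would apply Remark \ref{part2_delta and gamma remark}(c) to $G$ (which is $C^{\infty}$ off the origin) together with Lemma \ref{part2_derivatives of H}(a) to obtain
\begin{equation*}
|\Delta_h^{k-j} G(x+jh, y)| \;\leq\; C\, |h|^{k-j} \sup_{\xi \in I_{j,h}(x)} y\, \xi^{-\alpha/\beta - (k-j)},
\end{equation*}
where $I_{j,h}(x)$ denotes the interval spanned by $x+jh,\ldots,x+kh$, with the understanding that the right-hand side vanishes unless some $\xi \in I_{j,h}(x)$ lies in the cusp $\{\xi > 0,\ 0 \leq y \leq \xi^{\alpha/\beta}\}$. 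For $\Delta_h^j f$, I would use Remark \ref{part2_delta and gamma remark}(a) combined with the growth estimate $|f(x,y)| \leq C(|x|^{\alpha} + |y|^{\beta})$ close to the singularity, and Remark \ref{part2_delta and gamma remark}(c) combined with the explicit formula for $f_x^{(j)}$ in Lemma \ref{part2_basic lemma} away from it. The integration is then performed over the partition $V_1 \cup V_2 \cup V_3 \cup V_4$ already used in the proof of Lemma \ref{part2_modulus lemma}: on $V_4$ every translate $x+ih$ remains in $\{x < 0\}$, so $\Delta_h^{k-j}G$ vanishes identically and the mixed term contributes nothing; on $V_1$ the crude bound $|\Delta_h^{k-j}G| \leq 2^{k-j}$ reduces the analysis to the $V_1$ calculation already carried out, yielding a contribution of order $h^{2\alpha + \alpha/\beta + 1}$; on $V_2$ and $V_3$ the pointwise estimates above, integrated over the cusp $0 \leq y \leq \xi^{\alpha/\beta}$, produce integrands scaling in $x$ exactly as the corresponding pieces of Lemma \ref{part2_modulus lemma}.

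The main obstacle is the bookkeeping on the cusp: for each $j < k$ one must verify that the blow-up of $G_x^{(k-j)}$ at rate $\xi^{-\alpha/\beta - (k-j)}$ is defeated both by the prefactor $|h|^{k-j}$ and by the thinness of the cusp in the $y$-direction (of width $\xi^{\alpha/\beta}$), so that each mixed term matches the $j=k$ term in scale and the total integral is $O(h^{2\alpha + \alpha/\beta + 1})$. Dividing by $|h|^{2\alpha + \alpha/\beta + 2 - \epsilon}$ and integrating over $h \in [-b,b]$ then gives a convergent quantity. For part $(b)$ the identical structure applies: on the cusp the condition $y \leq x^{\alpha/\beta}$ is equivalent to $x \geq y^{\beta/\alpha}$, and Lemma \ref{part2_derivatives of H}(b) yields $|G_y^{(k-j)}| \leq C\, x^{-(k-j)\alpha/\beta}$, which on the cusp is comparable to $y^{-(k-j)}$; the symmetric calculation produces the required exponent $2\beta + \beta/\alpha + 1$.
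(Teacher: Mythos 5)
Your plan reproduces the paper's argument in all essentials: apply the Leibniz-type identity (Remark~\ref{part2_delta and gamma remark}$(b)$) to split $\Delta_h^k(f\cdot\e^{2\pi\im H})$ into a sum over $j$, dispose of the ``pure $f$'' term via Lemma~\ref{part2_modulus lemma}, kill the mixed terms in the left half-plane and outside the cusp, and estimate them inside the cusp using Remark~\ref{part2_delta and gamma remark}$(c)$ together with Lemma~\ref{part2_derivatives of H} and the explicit formula of Lemma~\ref{part2_basic lemma}. The only differences from the paper are cosmetic. First, you expand with $f$ unshifted and $G=\e^{2\pi\im H}$ shifted, whereas the paper takes $G$ unshifted and $f$ shifted; the two are equivalent. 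Second, you reuse the four-piece partition $V_1,\ldots,V_4$ from Lemma~\ref{part2_modulus lemma} (with $V_1$ of height $((k+1)h)^{\alpha/\beta}$), while the paper enlarges the small box to height $((2k+1)h)^{\alpha/\beta}$ and uses a three-piece partition $V_1\cup V_2\cup V_3$. The paper's enlargement is precisely what makes the mixed terms vanish identically on the entire middle strip above $V_1$ (since then $\xi\le(2k+1)h$ and $y>((2k+1)h)^{\alpha/\beta}$ imply $(\xi,y)$ lies outside the cusp), avoiding any residual computation there; with your smaller $V_1$ you instead pick up a thin nonzero strip $y\in[((k+1)h)^{\alpha/\beta},((2k+1)h)^{\alpha/\beta}]$ in $V_2$, which does scale correctly (contribution $\sim h^{2\alpha+\alpha/\beta+1}$) but needs to be singled out explicitly. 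You flag the cusp bookkeeping as the remaining work; the paper's choice of partition makes that bookkeeping shorter, but nothing in your route fails. Your treatment of part $(b)$ by symmetry via Lemma~\ref{part2_derivatives of H}$(b)$ also matches the paper's $W_1,W_2,W_3$ argument, with the minor caveat that ``$x^{-(k-j)\alpha/\beta}$ is comparable to $y^{-(k-j)}$'' holds only as a one-sided bound on the cusp interior, which is all that is needed.
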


\begin{proof}
 We show that the integral in
$(a)$ converges. For the estimate of the integral in $(b)$, which
can be obtained in much the same way, we give a short sketch at
the end of this proof.

    Set $f=f_{\alpha,\beta,\gamma}$, $F=F_{\alpha,\beta,\gamma}$ and $H=H_{\frac{\alpha}{\beta}}$.
     By Remark \ref{part2_delta and gamma remark}$(b)$, it
      is enough to show that for every $0 \leq n \leq k$ and $0<h\leq1$
      we have
    \begin{equation}
        \label{part2_first argument inequality}
        \iint_{[-c,c]^2} \Big( \abs{\Delta_h^n  \e^{2\pi \im H } \cdot \Delta_h^{k-n} f_{nh} }^2 +  \abs{\Delta_{-h}^n
         \e^{2\pi \im H } \cdot \Delta_{-h}^{k-n}f_{-nh}}^2 \Big) \dif x \dif y \leq C h^{2\alpha + \alpha/\beta + 1},
    \end{equation}
    where we use the notation $f_{nh}(x,y)=f(x+nh,y)$ and the constant $C=C(F,k)$ does not depend on $h$.
    Since the case $n=0$ follows from Lemma \ref{part2_modulus lemma}, it remains to show that (\ref{part2_first argument inequality})
 holds for $1 \leq n \leq
    k$. Fix such an integer $n$.

     \begin{figure}
        \includegraphics{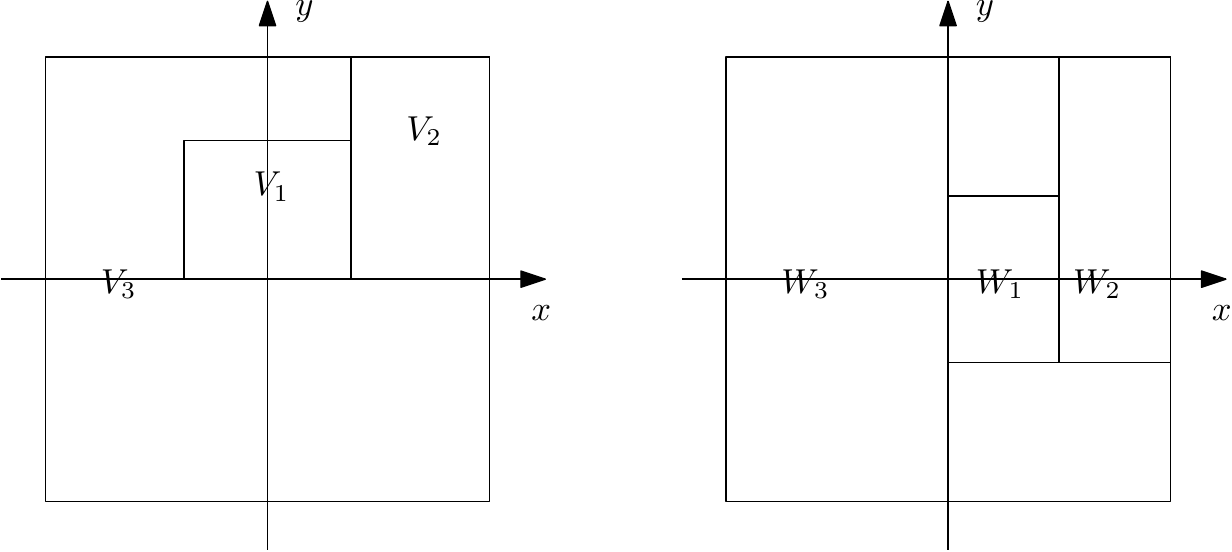}
        \caption{Partition of $[-c,c]^2$ into $V_1 \cup V_2 \cup V_3$ and $W_1 \cup W_2 \cup W_3$ in the proof of Lemma \ref{part2_full lemma}.}
        \label{part2_V figure2}
    \end{figure}

    Since $F$ is bounded on any compact set, we can assume that $h$ is small enough for the following partition to hold
    \begin{equation*}
         [-c,c]^2 = V_1 \cup V_2 \cup V_3,
    \end{equation*}
    where (see Figure \ref{part2_V figure2})
    \begin{align*}
        V_1 & =  [-(k+1)h, (k+1)h] \times [0, ((2k+1)h)^{\alpha/\beta}], \\
        V_2 & =  [(k+1)h,c] \times [0,c],\\
        V_3 & =  [-c,c]^2 \backslash (V_1 \cup V_2). 
    \end{align*}
    This induces the splitting $I_1 + I_2 + I_3$ of the integral
    in (\ref{part2_first argument inequality}).
 The estimate for $I_1$ can be obtained using Remark \ref{part2_delta and gamma remark}$(a)$, as was done for the estimate over the
     area  $V_1$ in the proof of Lemma \ref{part2_modulus lemma}.
    To estimate $I_3$, it suffices to observe that $\e^{2\pi \im H(\xi,y)} = 1$ for $(x,y) \in V_3$
    and $\xi \in [x-kh, x+kh]$, whence $I_3 = 0$ (for
    example, by Remark \ref{part2_delta and gamma remark}$(c)$).

    We estimate $I_2$. By lemmas \ref{part2_basic lemma} and \ref{part2_derivatives of H}, one can apply Remark
    \ref{part2_delta and gamma remark}$(c)$ in this area. So, it suffices to show that
    \begin{equation*}
        \iint_{V_2}  \Big( \sup_{\abs{\xi-x} \leq kh} \abs{ (\e^{2\pi \im H(\xi,y)})_x^{(n)} }^2 \Big)
        \Big(  \sup_{\abs{\xi-x}\leq kh} \abs{ f_x^{(k-n)}(\xi,y) }^2 \Big) \dif x \dif y
        \leq h^{2\alpha + \alpha/\beta + 1 - 2k}.
    \end{equation*}
    Note that for $(x,y) \in V_2$, the condition $\abs{\xi-x} \leq kh$ implies that
    $x/(k+1) \leq \xi \leq 2x$.
    In addition, if $y > (2x)^{\alpha/\beta}$, then $(\e^{2\pi \im H(\xi,y)})_x^{(n)} = 0$.
    On the other hand, if $y \leq (2x)^{\alpha/\beta}$, then by Lemmas \ref{part2_basic lemma} and \ref{part2_derivatives of H} we have
    \begin{multline*}
        \sup_{\abs{\xi-x} \leq kh}  \abs{ (\e^{2\pi \im H(\xi,y)})_x^{(n)} }
        \sup_{\abs{\xi-x} \leq kh}  \abs{ f_x^{(k-n)}(\xi,y) }
        \\
        \leq
        \sup_{x/(k+1) \leq \xi \leq 2x}  \abs{ (\e^{2\pi \im H(\xi,y)})_x^{(n)} }
        \sup_{x/(k+1) \leq \xi \leq 2x}  \abs{ f_x^{(k-n)}(\xi,y) }
        \leq
        C y x^{\alpha - \alpha/\beta -k}.
    \end{multline*}
    Hence, the desired estimate is found by checking that
    \begin{equation*}
        \int_{(k+1)h}^c \int_0^{(2x)^{\alpha/\beta}}  y^2 x^{2\alpha - 2\alpha/\beta -2k}  \dif y \dif x
        \leq
        C h^{2\alpha + \alpha/\beta + 1 - 2k}.
    \end{equation*}
    This completes the proof of \eqref{part2_first argument inequality}.

     In a similar way, one can show that the integral in $(b)$
    converges. An appropriate partition in this case is
 $[-c,c]^2 = W_1 \cup W_2 \cup W_3,$
    where (see Figure \ref{part2_V figure2})
    \begin{align*}
        W_1  &=  [0,h^{\beta/\alpha}] \times [-(k+1)h, (k+1)h], \\
        W_2  &=  [h^{\beta/\alpha},c] \times [-(k+1)h, c], \\
        W_3  &=  [-c,c]^2 \backslash (W_1 \cup W_2).
    \end{align*}

    Note that over $W_2$, the corresponding integral is smaller than
     the one taken over the area $h^{\beta/\alpha} < x < c$ and $- (k+1)x^{\alpha/\beta} < y <
     (k+1)x^{\alpha/\beta}$, which is easily estimated.
    This completes the proof.
\end{proof}

%
%

%

%

%


\section{Theorem \ref{main theorem} -- Second part} \label{proof - part b} \label{final section}

Here we prove part $(b)$ of Theorem \ref{main result}. Assume that the point $(r,s)$ is above the curve $\Gamma_q$ (see \eqref{gamma q} and Figure \ref{figure}). This implies that either one of the following conditions holds: 
   \begin{equation}
   \label{case two for part b}
   \frac{1}{r} + \frac{3}{s} > 1 \quad \text{and} \quad \frac{1}{r} + \frac{1}{s} > \frac{q}{2(q-1)},
   \end{equation}
   or
   \begin{equation}
\label{case one for part b}
   \frac{1}{r} + \frac{3}{s} \leq 1 \quad \text{and} \quad \frac{3q-2}{q+2}\cdot \frac{1}{r} + \frac{1}{s} > 1.
   \end{equation}

    For each of the conditions 
    \eqref{case two for part b} and \eqref{case one for part b}, we a construct a quasi-periodic
    function $G$ on $\R^2$ that, when restricted to $Q$, is square integrable. By the surjectivity of the Zak transform, there exists a
    function $g\in L^2(\R)$ such that $g = Z^{-1}G$. We prove that this function satisfies
    all the requirements of Theorem \ref{main result}. Roughly
    speaking, we show that on the one hand the functions $G$ are smooth enough for the
    time--frequency conditions \eqref{e:bl cond r-s} to
    follow from Lemma \ref{part2_stein lemma for delta and gamma},  while
    on the other hand they decrease slowly enough, near their single
    zero, for the $(C_q)$ property to follow from Lemma \ref{l:basic lemma, function version}.

    In various stages of our construction we make simple
    interpolations of functions. To this end, we
    use of the following auxiliary function. Fix $0<\eta <1/4$, and denote by $\rho(t)$ an even
    function in $C^\infty(\R)$ which satisfies
    $0 < \rho(t) < 1$ on $(-2\eta,2\eta)$ and
    \begin{equation}
    \label{part2_e:the function rho}
        \rho(t) = \left\{ \begin{array}{cl} 1 & \quad \text{for} \quad t \in [-\eta,\eta], \\
        0 & \quad \text{for} \quad  t\in \R \backslash [-2\eta,2\eta]. \end{array} \right.
    \end{equation}

   $\:$

   \subsection{Proof for the first set of conditions} Fix $r \leq s $ for which condition
   \eqref{case two for part b} holds. We may assume that $1/r + 1/s \leq 1$. 
   Choose $\epsilon>0$ small enough for the numbers
    \begin{equation}
    \label{part2_e:who are r',s': first part}
        r' = r + \epsilon \quad \text{and} \quad s' = s + \epsilon
    \end{equation}
    to satisfy
    \begin{equation}
    \label{part2_cond on r' and s'}
        \frac{1}{r'} + \frac{1}{s'} > \frac{q}{2(q-1)}.
    \end{equation}
    Set
    \begin{equation} \label{part2_choice of alpha and beta 1}
        \alpha =  \frac{r'}{2} \Big( 1 - \frac{1}{r'} - \frac{1}{s'} \Big) \quad \text{and} \quad
        \beta =  \frac{s'}{2} \Big( 1 - \frac{1}{r'} - \frac{1}{s'} \Big).
    \end{equation}

    In the construction of the function $G$ described above, we consider the
   argument and   modulus separately. In fact, we construct
    functions $\Psi$ and $\Phi$ such that
    \begin{equation}\label{part2_G in terms of phi psi}
    G(x,y)=\Phi(x-1/2,y-1/2)e^{2\pi i \Psi(x-1/2,y-1/2)}.
    \end{equation}
    To  define the argument of $G$, i.e., the real valued function $2\pi \Psi$, we use a minor modification of a  construction from \cite{benedetto_czaja_gadzinski_powell2003}.  That is, instead of a singularity at the origin, we find it more convenient to use an argument with a singularity at $(1/2, 1/2)$. This also accounts for the translation of $1/2$ in the definition \eqref{part2_G in terms of phi psi}.

     We begin by defining the function $\Psi(x,y)$ on $[-1/2,1/2) \times [0, 1)$:
     \begin{equation*}
     \Psi(x,y) = \left\{ \begin{array}{cc} 0 & \quad x \in [-1/2,0], \\
     \rho (x)H_{\frac{\alpha}{\beta}}(x,y)+(1-\rho
    (x))(y-1/2) & \quad x \in [0,1/2). \end{array} \right.
\end{equation*}
 where $H_{\frac{\alpha}{\beta}}$ is the function defined in  \eqref{part2_definition of H} (see Figure \ref{part2_figure psi}).
   \begin{figure}
           \includegraphics{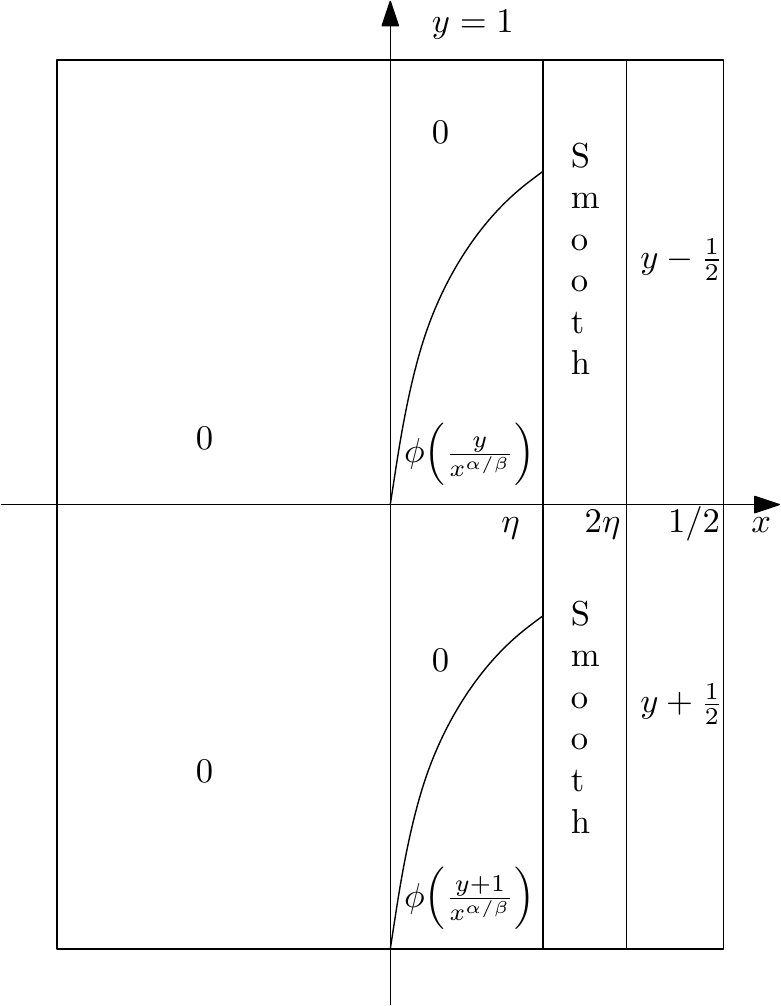}
       \caption{Illustration of the function $\Psi$ on $[-1/2,1/2] \times [-1,1]$.}
      \label{part2_figure psi}
   \end{figure}
	We extend $\Psi$ to the plane according to the rules
	\begin{equation} \label{rules in one line}
		\begin{split}
		\Psi(x+1,y)&=\Psi(x,y)+y-1/2 \qquad \text{for} \; x \in \R,\; y \in [0,1), \qquad \\
	 	\Psi(x,y+1)&=\Psi(x,y) \qquad \qquad \qquad \, \, \,  \text{for} \;  \;  (x,y) \in \R^2.
		\end{split}
	\end{equation}
   Note that the function $e^{2\pi i \Psi(x,y)}$ is continuous over $\R^2
    \setminus \Z^2$ since,
   \begin{equation*}
        \lim_{y \rightarrow  {1}^-} \Psi(x,y) = \left\{ \begin{array}{ll} \Psi(x,0) & \quad   \\
        \Psi(x,0) + 1 \end{array}\right. \qquad \text{for} \quad x\in [-1/2,1/2),
    \end{equation*}
    and
    \begin{equation*}
         \lim_{x \rightarrow \frac{1}{2}^-} \Psi(x,y) =  y -1/2 = \Psi(-1/2,y) + y - 1/2, \qquad \qquad \forall y \in [0,1).
    \end{equation*}
    In fact, one can verify that $\e^{2\pi \im \Psi(x,y)}$ belongs to $C^\infty(\R^2 \backslash \Z^2)$ (see also \cite{benedetto_czaja_gadzinski_powell2003}).

We turn to constructing the modulus of $G$, i.e. the function $\Phi$. Choose $k\in
\N$ which satisfies
\begin{equation}\label{part2_who is k, first part}
k>\frac{1}{2}\max\{r',s'\},
\end{equation}
and a number $\gamma>0$ for which
\begin{equation}\label{part2_who is gamma, first part}
\gamma<\min\left\{\frac{\alpha}{k},\frac{\beta}{k}\right\}.
\end{equation}
We define the function $\Phi(x,y)$ on $[-1/2,1/2)^2$ by
\begin{equation*}
        \Phi(x,y) = \rho(y) \Big( \rho(x) (\abs{x}^{\alpha/\gamma} + \abs{y}^{\beta/\gamma})^{\gamma} + 1 - \rho(x) \Big) + 1 - \rho(y),
    \end{equation*}
    and extend $\Phi$ to be a $1$-periodic function on the plane
    (See Figure \ref{part2_figure phi}).
   \begin{figure}
           \includegraphics{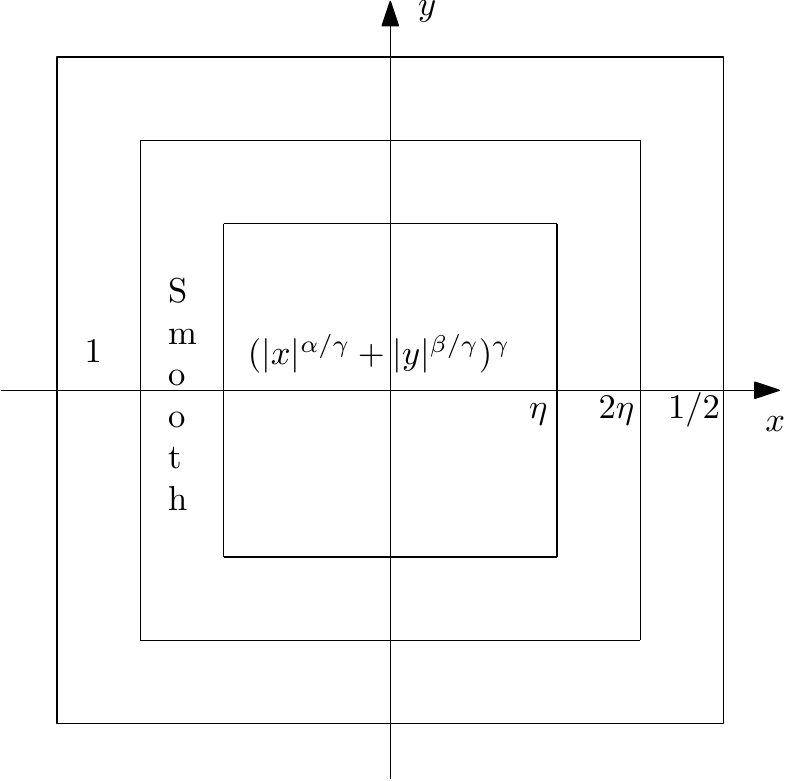}
       \caption{Illustration of the function $\Phi$ on $[-1/2,1/2]^2$.}
      \label{part2_figure phi}
   \end{figure}
    Note that  $\Phi$ is continuous on $\R^2$ since
    \begin{equation*}
        \Phi(-1/2,y)= \lim_{x \rightarrow \frac{1}{2}^-}\Phi(x,y)=1, \qquad \forall y\in [-1/2,1/2),
    \end{equation*}
    and
    \begin{equation*}
        \Phi(x,-1/2)= \lim_{y \rightarrow \frac{1}{2}^-}\Phi (x,y)=1, \qquad \forall x\in [-1/2,1/2).
    \end{equation*}
    In fact, using Lemma \ref{part2_basic lemma}, one can check
    that $\Phi \in C^k (\R^2 \backslash \Z^2)$. Moreover,
    $\Phi=0$ on the lattice $\Z^2$, and only there.

    Consider the function
    \[
    \Phi(x,y)e^{2\pi i \Psi(x,y)}.
    \]
    We list its growth and smoothness properties:
    \begin{itemize}
        \item[(i)] It belongs to $C^k (\R^2 \backslash \Z^2)$.
        \item[(ii)] Its modulus is continuous, bounded, and is equal to zero
        on $\Z^2$, and only there.
        \item[(iii)]There exists a neighbourhood of the origin, say $U$, on
        which it is equal to the function
        $F_{\alpha,\beta,\gamma}$ defined in \eqref{part2_definition of F}.
    \end{itemize}
    In particular, it follows that the function $G$ defined in \eqref{part2_G in terms of phi
    psi} is bounded. Moreover, since $\Phi$ is $1$-periodic, the condition \eqref{rules in one line} implies that $G$ is quasi-periodic over $\R^2$.
    Therefore, there exists a function $g\in L^2(\R)$ such that
    $Zg=G$ on $\R^2$. We prove that $g$ satisfies the requirements of
    Theorem \ref{main result}.

    To show that $g$ has the required
    time--frequency localisation, we check that the integrals in
    \eqref{e:bl cond r-s} are finite.
By Lemma \ref{part2_stein lemma for delta and gamma}, we need to show
that for $\alpha$, $\beta$, $k$ and $\gamma$ chosen above, the following integrals are
    finite:
    \begin{equation} \label{part2_what has to be finite}
            \int_{-\infty}^\infty \iint_{[0,1]^2} \frac{\abs{\Delta_h^k G(x,y)}^2}{h^{1+r}} \dif x \dif y \dif h
            \quad \text{and} \quad
            \int_{-\infty}^\infty \iint_{[0,1]^2} \frac{\abs{\Gamma_h^k G(x,y)}^2}{h^{1+s}} \dif x \dif y \dif h.
    \end{equation}
    We show that
    the left-hand integral is finite, the proof for the right-hand integral follows in the same way.
	As in Remark \ref{bonus remark}$(a)$, it is enough to show that for some $\delta >0$ the integral
     \begin{equation*}
        \int_{-\delta}^\delta \iint_{[0,1]^2} \frac{\abs{\Delta_h^k G(x,y)}^2}{h^{1+r}} \dif x \dif y \dif h=
        \int_{-\delta}^\delta \iint_{[-\frac{1}{2},\frac{1}{2}]^2}\frac{\abs{\Delta_h^k \big(\Phi (x,y)e^{2\pi
         i\Psi(x,y)}\big)}^2}{h^{1+r}} \dif x \dif y \dif h\\
         \end{equation*}
         converges.

         So, choose $\delta>0$ such that
         \begin{equation}
         \label{part2_who is delta, first part}
         [-(2k+1)\delta,(2k+1)\delta]^2\subset U,
         \end{equation}
         where $U$ is the neighbourhood of the origin described in
         property (iii) above.%
         We divide the integral above into the sum of two
         integrals $I_1+I_2$ according to the following partition of $[-1/2,1/2]^2$:
          \begin{equation} \label{part2_le split in le main proof}
        \Omega_1 = [-(k+1)\delta, (k+1)\delta]^2 \quad \text{and} \quad \Omega_2 = \Big[-\frac{1}{2},\frac{1}{2}\Big]^2 \backslash \Omega_1.
    \end{equation}
    To check that $I_2$ is finite, we use Remark \ref{part2_delta and gamma remark}$(c)$ and property (i) above to get
     \begin{equation*}
      \int_{-\delta}^\delta \iint_{\Omega_2} \frac{\abs{\Delta_h^k \big(\Phi e^{2\pi i\Psi}\big)}^2}{h^{1+r}}
      \dif x \dif y \dif h \leq \int_{-\delta}^\delta h^{2k-1-r} \dif h
      \sup_{(x,y)
          \in \Omega_2} \sup_{\abs{x-\xi} \leq k \delta} \abs{\big(\Phi e^{2\pi i\Psi}\big)_x^{(k)}
          (\xi,y)}, \\
         \end{equation*}
 where the supremum exists and is finite, and so $I_2$ is finite since $2k>r$
 (see \eqref{part2_e:who are r',s': first part} and \eqref{part2_who is k, first part}).
    That $I_1$ converges follows from Lemma \ref{part2_full
lemma}. To see this, first note that \eqref{part2_who is delta, first
part} and property (iii) above ensure that
\[\Delta_h^k \big(\Phi e^{2\pi i\Psi}\big) =
    \Delta_h^k F_{\alpha,\beta,\gamma}, \qquad \qquad \forall
    (x,y)\in \Omega_1, \quad h\in [-\delta,\delta].
    \]
In addition, the conditions
\eqref{part2_e:who are r',s': first part} and  \eqref{part2_choice of alpha and beta 1} imply that $2\alpha +
\alpha/\beta + 1 = r'=r+\epsilon$. So the choices of $k$ and
$\gamma$ ensure that Lemma \ref{part2_full lemma} can be applied (see \eqref{part2_who is k, first part} and \eqref{part2_who is gamma,
first part}).

    The system $G(g,1,1)$ is a Bessel system in $L^2(\R)$, see Remark \ref{bessel remark}.
    It remains to be checked that $G(g,1,1)$ is an exact $(C_q)$-system in $L^2(\R)$.
    By Lemma \ref{l:basic lemma, function version}, it is enough to show
    that ${1}/{\abs{Zg}^{2}}$ is in $L^{\frac{q}{q-2}}([0,1]^2)$.
    This is equivalent to
    \begin{equation}
    \label{part2_condition for cq}
         \frac{1}{\abs{\Phi(x,y)}^2} \in L^{\frac{q}{q-2}}([-1/2,1/2]^2).
    \end{equation}
    Note that properties (ii) and (iii) above imply that on $[-1/2,1/2]^2$ we have
    \begin{equation*}
        \abs{\Phi(x,y)}
        \geq C (\abs{x}^{\alpha/\gamma} + \abs{y}^{\beta/\gamma})^{\gamma}
        \geq C (\abs{x}^{\alpha/\beta} + \abs{y})^{\beta}.
    \end{equation*}
    So \eqref{part2_condition for cq} follows from  \eqref{part2_cond on r' and
    s'} and \eqref{part2_choice of alpha and beta 1}  in a direct computation. This completes the proof for the
    first part.


\subsection{Proof for the second set of conditions} Next, let $r<s<\infty$ and assume that the inequalities \eqref{case one for part b} hold.
 (The case $s=\infty$ will be dealt with separately). Choose $\epsilon>0$ small enough for the numbers
    \begin{equation}
    \label{part2_who are r' and s'}
        r' = r + \epsilon \quad \text{and} \quad s' = s + \epsilon
    \end{equation}
   to satisfy
    \begin{equation}
     \label{part2_first cond on r' and s'}
        \frac{3q-2}{q+2} \cdot \frac{1}{r'} + \frac{1}{s'} > 1
    \end{equation}
    and
    \begin{equation}
     \label{part2_second cond on r' and s'}
     \frac{1}{r'} + \frac{3}{s'} < 1.
     \end{equation}
Let the numbers $\alpha, \beta$, $k$ and $\gamma$ be as defined in
\eqref{part2_choice of alpha and beta 1},
     \eqref{part2_who is k, first part} and \eqref{part2_who is gamma, first part} respectively.

      Our objective   is to construct a
    function $\Upsilon$, which on $[-1/2,1/2)^2$
    satisfies
      \begin{equation} \label{part2_definition of upsilon}
      \Upsilon (x,y)=\Theta (x,y)-\Theta (-x,y)e^{2\pi i y}
      \end{equation}
	for some function $\Theta$, in such a way that the function
    \begin{equation} \label{part2_theta}
         G(x,y) := \Upsilon(x-1/2,y-1/2)
    \end{equation}
    is a quasi-periodic function with the desired smoothness properties and zero at $(1/2,1/2)$.

    We begin by constructing the function $\Theta$ on $[-1/2,1/2)^2$  in two steps. First, define the function
    (see Figure \ref{part2_figure theta})
    \begin{equation*}
        \Theta_0(x,y) := \left\{ \begin{array}{cl}
        \rho(x) \Big[ \big( 2(-x)^{\alpha/\gamma} + \abs{y}^{\beta/\gamma}  \big)^\gamma + 1 \Big] + 1 - \rho(x) & \quad \text{for} \quad -1/2 \leq x < 0,
        \\ \rho(x) \Big[ (x^{\alpha/\gamma} + \abs{y}^{\beta/\gamma} )^\gamma + 1 \Big] & \quad \text{for} \quad 0 \leq x < 1/2. \end{array} \right.
    \end{equation*}
    By  Lemma \ref{part2_modulus lemma} the function $\Theta_0$ belongs to $C^k((-1/2,1/2)^2 \backslash (0,0))$. In addition, it has the following properties:
     \begin{itemize}
        \item[(i)]$
        \Theta_0(x,y) = \left\{ \begin{array}{cl} 1 & \quad \text{for} \quad x \in [-1/2,-2\eta], \\
        0 & \quad \text{for} \quad  x\in [2\eta,1/2),  \end{array} \right.
    $\item[(ii)] It is bounded from below on $W_1=[-\eta, \eta] \times [-1/2, 1/2)$.
        \item[(iii)] The difference $\abs{\Theta_0(-x,y) - \Theta_0(x,y)}$ is bounded from below on the set
        $W_2=[-1/2,1/2)^2\setminus W_1$.
        \item[(iv)] On $[-\eta, \eta]^2$, it is equal to
        $f_{\alpha,\beta,\gamma}(x,y) + 1$, where the function $f_{\alpha,\beta,\gamma}$ is defined in \eqref{part2_definition of f} with a fixed $a=2^{\gamma/\alpha}$.
    \end{itemize}
        Next, we wish to preserve these properties for the function $\Theta$ while adding the additional condition
        \begin{itemize}
        \item[(v')]
        $\Theta(x,-y)=\Theta(x,y)\quad$ for $\quad y\in [2\eta,1/2)$.
        \end{itemize}
     To this end, denote by $\nu(t)$ a function in $C^\infty([-1/2,1/2))$ that satisfies
     \begin{equation*}
        \nu(t) = \left\{ \begin{array}{cl} 1 & \quad \text{for} \quad t \in [-1/2,-2\eta], \\
        0 & \quad \text{for} \quad  t\in [2\eta,1/2),  \end{array} \right.
    \end{equation*}
	 is bounded from below on $J=[-\eta, \eta]$, and for which $\abs{\nu(-t) - \nu(t)}$ is bounded from below
    on $[-1/2,1/2)\setminus J$.

    With this, we make the following definition for $(x,y) \in [-1/2,1/2)^2$ (see Figure \ref{part2_figure theta}):
        \begin{figure}
    	\includegraphics{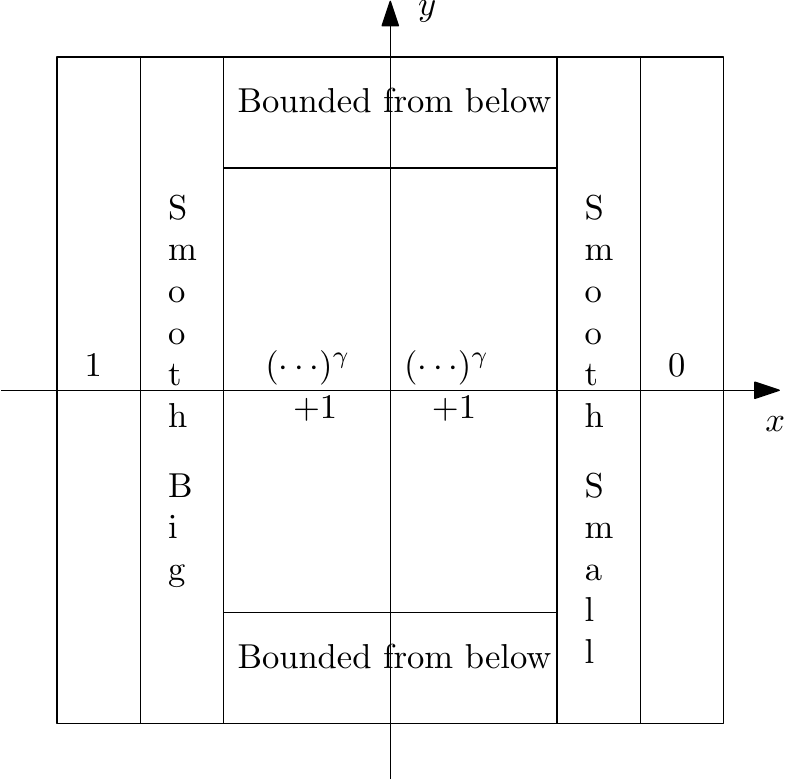}
    	\caption{Illustration of the functions $\Theta_0$ and $\Theta$ on $[-1/2,1/2)^2$.}
		\label{part2_figure theta}
    \end{figure}
    \begin{equation*}
        \Theta(x,y)  :=  \rho(y) \Theta_0(x,y) + \big(1 - \rho(y)\big)
        \nu(x).
    \end{equation*}
    One can easily verify that with this interpolation the property (v'), as well as the properties corresponding to (i)--(iv), hold for $\Theta$. For the function $\Theta$, we refer to these properties as (i')--(v').

    Define the function $\Upsilon$ on $[-1/2,1/2)^2$ by \eqref{part2_definition of upsilon},
and extend it to $\R^2$ according to the rules
\begin{equation}
      \label{part2_almost quasi periodic 2}
\Upsilon(x,y+1)=\Upsilon(x,y), \quad \Upsilon(x+1,y)=-e^{2\pi i
y}\Upsilon(x,y), \quad \qquad \forall (x,y)\in \R^2.
\end{equation}
The function $\Upsilon$ is continuous on $\R^2$ since, as follows
from property $(i')$,
\begin{equation*}
\lim_{x \rightarrow \frac{1}{2}^-}\Upsilon(x,y)=-e^{2\pi i
y}=-e^{2\pi i y}\Upsilon(-1/2,y), \qquad \forall y\in [-1/2,1/2),
\end{equation*}
while property (v') implies
\begin{equation*}
\lim_{y \rightarrow \frac{1}{2}^-}\Upsilon(x,y)=\Upsilon(x,-1/2), \qquad \forall x\in [-1/2,1/2).
\end{equation*}
With a more careful use of properties (i') and (v') one can
verify that in fact $\Upsilon\in C^k(\R^2 \setminus \Z^2)$.

    Let $G$ be the function defined in \eqref{part2_theta}. In
    particular, the conditions above imply that $G$ is bounded on
    $\R^2$. Moreover, the function $G$ is quasi-periodic, as
    follows from the condition \eqref{part2_almost quasi periodic 2} on $\Upsilon$. As
    above, this implies that there exists a function $g\in L^2(\R)$ such that
    $Zg=G$ on $\R^2$. We prove that $g$ satisfies the requirements of
    Theorem \ref{main result}.

 As in the first part, to see that $g$ has the required time--frequency localisation, we bound the integrals \eqref{part2_what has to be finite} using the partition  $\Omega_1 \cup \Omega_2$ given in \eqref{part2_le split in le main proof}. The estimates over $\Omega_2$ follow exactly as before, while the estimate for $\Delta_h$ over $\Omega_1$ follows from the inequality
     \begin{align*}
        \abs{\Delta_h^k \Upsilon(x,y)}^2 &\leq   2 \big( \abs{\Delta_h^k \Theta(x,y)}^2 + \abs{\Delta_h^k \Theta(-x,y)}^2  \big),
    \end{align*}
    in addition to Lemma \ref{part2_modulus lemma}.
	The estimate for $\Gamma_h$ can be obtained in essentially the same way, using Remark \ref{bonus lemma}$(b)$ to compensate for the additional exponential factor.

    Again, the system $G(g,1,1)$ is a Bessel system in $L^2(\R)$, see Remark \ref{bessel remark}, and so
   it remains to be checked that $G(g,1,1)$ is an exact $(C_q)$-system. As in the first part of the proof,
     it suffices to check that
     \begin{equation}\label{part2_ooooof}
     \frac{1}{\abs{\Upsilon}^2} \in L^{q/(q-2)}([-1/2,1/2]^2).
     \end{equation}
     We claim that on $[-1/2,1/2]^2$ we have $\abs{\Upsilon(x,y)}^2 >C (\abs{x}^{\alpha} +
    |y|)^2$. This,
    combined with \eqref{part2_choice of alpha and beta 1} and \eqref{part2_first cond on r' and
    s'}, implies \eqref{part2_ooooof} in a direct computation which completes the proof.
    So, to verify the estimate above, first calculate
    \begin{align*}
        \abs{\Upsilon(x,y)}^2 &=  \abs{\Theta(x,y) - \Theta(-x,y) \e^{2\pi \im y} }^2 \\
        &= (\underbrace{ \Theta(x,y) - \Theta(-x,y) }_{(I)} )^2+ 4\underbrace{
        \Theta(x,y) \Theta(-x,y) \sin^2 \pi y}_{(II)}.
    \end{align*}
    Now, by property (iii'), it follows that $(I)$
    is bounded from below on $W_2$. In the same way $(II)$ is bounded
    from below
    on $W_1\setminus[-\eta,\eta]^2$ due to property (ii').
    This leaves the region $[-\eta, \eta]^2$, where, by the above calculation and (iv'), we have
    \begin{equation*}
        \abs{\Upsilon(x,y)}^2 
        \geq  C \Big\{ \underbrace{ \Big( \big( 2\abs{x}^{\alpha/\gamma}  + \abs{y}^{\beta/\gamma}  \big)^\gamma -  \big( \abs{x}^{\alpha/\gamma}  + \abs{y}^{\beta/\gamma}  \big)^\gamma  \Big)^2 + y^2 }_{(III)} \Big\}.
    \end{equation*}
    By \eqref{part2_choice of alpha and beta 1} and \eqref{part2_second cond on r' and s'}, we have $\beta >1$, and so $y^2 \geq y^{2\beta}$. In addition, we note that since $0<\gamma<1$,  we have
    $a^\gamma + b^\gamma \geq (a+b)^\gamma$ for positive $a$ and $b$. Using these facts, and \eqref{secret trick}, we obtain 
        \begin{align*}
            (III)
            &\geq  \frac{1}{2} \left[ \Big( \big( 2\abs{x}^{\alpha/\gamma}  + \abs{y}^{\beta/\gamma}  \big)^\gamma -  \big( \abs{x}^{\alpha/\gamma}  + \abs{y}^{\beta/\gamma}  \big)^\gamma  \Big)^2 + y^{2\beta} + y^2 \right]\\
            &\geq  \frac{1}{4} \left[ \Big( \big( 2\abs{x}^{\alpha/\gamma}  + \abs{y}^{\beta/\gamma}  \big)^\gamma + (y^{\beta/\gamma})^\gamma -  \big( \abs{x}^{\alpha/\gamma}  + \abs{y}^{\beta/\gamma}  \big)^\gamma  \Big)^2 +y^2 \right]\\
            &\geq  \frac{1}{4} \left[\Big( \big( 2\abs{x}^{\alpha/\gamma}  + 2\abs{y}^{\beta/\gamma}  \big)^\gamma  -  \big( \abs{x}^{\alpha/\gamma}  + \abs{y}^{\beta/\gamma}  \big)^\gamma  \Big)^2 +y^2\right]\\
            &
            \geq C\left[\abs{x}^{2\alpha} + y^2\right]
            \geq C (\abs{x}^{\alpha} + |y|)^2.
    \end{align*}

We turn to the case $s=\infty$. If the point
$(1/r,0)$ is above the curve $\Gamma_q$, then there exists $r'$
which satisfies $r<r'<(3q-2)/(q+2)$. Set $\alpha=(r'-1)/2$. The
construction of the required example can be done in the same way
as above with the following modification of the function
$\Theta$:
\begin{equation*}
       \Theta(x,y)= \Theta_0(x,y) = \left\{ \begin{array}{cl}
        \rho(x)(2(-x)^\alpha  + 1) +1 - \rho(x) & \quad \text{for} \quad -1/2 \leq x < 0,
        \\ \rho(x)( x^\alpha + 1) & \quad \text{for} \quad 0 \leq x < 1/2, \end{array}
        \right.
    \end{equation*}
    and a corresponding change in the formulation and proof of
    Lemma 10. Note that in this case, the functions $g$ satisfying $Zg=G$, can be
    given explicitly. These functions are essentially the
    same as the functions $g_{\alpha}$ constructed in \cite{heil_powell09}, Section 6.

\qed

\section{Concluding remarks} \label{remarks section}
\begin{remark}
It is well-known that if a system $G(g,1,1)$ is a frame in
$L^2(\R)$ (i.e., a Bessel $(C_2)$-system), then it is also exact in
the space, and therefore   a Riesz basis. For general
$(C_q)$-systems, however, this is not the case: There exists a
system $G(g,1,1)$ which is a (Bessel) $(C_q)$-system, for every
$q>2$, but  is not exact. This can be shown in much the
same way as  \cite[Theorem 2]{nitzan2009quasi}.
\end{remark}

\begin{remark}
	The systems $G(g,1,1)$ constructed in Section \ref{final section} prove the following claim:
	For every $q_0 > 2$ there exists a system that is a (Bessel) $(C_q)$-system whenever $q>q_0$, but is not such a system for $q<q_0$.
	Indeed, the first part follows from the construction and the latter part follows by Theorem \ref{main result}$(a)$.
\end{remark}

\begin{remark} \label{last remark}
	Fix $q \geq 2$. It follows by Theorem 2$(a)$ that if the point $(1/r, 1/s)$ is below the curve $\Gamma_q$, then a function
	$g$, for which both the integrals in  \eqref{e:bl cond r-s} converge, cannot generate a $(C_q)$-system. By Theorem \ref{main result}$(b)$,
	on the other hand, if $(1/r,1/s)$ is above the curve $\Gamma_q$, then  the function  can generate a $(C_q)$-system.
	However, if $(1/r,1/s)$ is on the curve $\Gamma_q$, then Theorem \ref{main result}
	does not determine whether $g$ can generate a $(C_q)$-system, or not.
	%
%
We mention this question as a possible problem
for future research.
\end{remark}

\section*{Acknowledgement}
The authors would like to thank K.~Seip for his support and encouragement.

\bibliographystyle{amsplain}

\begin{thebibliography}{10}

\bibitem{balian1981}
Roger Balian, \emph{Un principe d'incertitude fort en th\'eorie du signal ou en
  m\'ecanique quantique}, C. R. Acad. Sci. Paris S\'er. II M\'ec. Phys. Chim.
  Sci. Univers Sci. Terre \textbf{292} (1981), no.~20, 1357--1362.

\bibitem{benedetto_czaja_gadzinski_powell2003}
John~J. Benedetto, Wojciech Czaja, Przemyslaw Gadzi{\' n}ski, and Alexander~M.
  Powell, \emph{The {B}alian-{L}ow theorem and regularity of {G}abor systems},
  J. Geom. Anal. \textbf{13} (2003), no.~2, 239--254.

\bibitem{benedetto_czaja_powell2006}
John~J. Benedetto, Wojciech Czaja, and Alexander~M. Powell, \emph{An optimal
  example for the {B}alian-low uncertainty principle}, SIAM J. Math. Anal.
  \textbf{38} (2006), no.~1, 333--345 (electronic).

\bibitem{benedetto_czaja2006}
John~J. Benedetto, Wojciech Czaja, Alexander~M. Powell, and Jacob Sterbenz,
  \emph{An endpoint {$(1,\infty)$} {B}alian-{L}ow theorem}, Math. Res. Lett.
  \textbf{13} (2006), no.~2-3, 467--474.

\bibitem{daubechies1990}
Ingrid Daubechies, \emph{The wavelet transform, time-frequency localization and
  signal analysis}, IEEE Trans. Inform. Theory \textbf{36} (1990), no.~5,
  961--1005.

\bibitem{daubechies_janssen93}
Ingrid Daubechies and Augustus J. E.~M. Janssen, \emph{Two theorems on lattice
  expansions}, IEEE Trans. Inform. Theory \textbf{39} (1993), no.~1, 3--6.

\bibitem{dorfler2001}
Monika D{\"o}rfler, \emph{{Time-frequency analysis for music signals: A
  mathematical approach}}, J. New Mus. Res. \textbf{30} (2001), no.~1, 3--12.

\bibitem{gabor1946theory}
Dennis Gabor, \emph{{Theory of communication}}, J. Inst. Elec. Eng. \textbf{93}
  (1946), 429--457.

\bibitem{gautam08}
Sushrut~Z. Gautam, \emph{A critical-exponent {B}alian-{L}ow theorem}, Math.
  Res. Lett. \textbf{15} (2008), no.~3, 471--483.

\bibitem{grochenig96}
Karlheinz Gr{\"o}chenig, \emph{An uncertainty principle related to the
  {P}oisson summation formula}, Studia Math. \textbf{121} (1996), no.~1,
  87--104.

\bibitem{grochenig01}
Karlheinz Gr\"{o}chenig, \emph{Foundations of time-frequency analysis},
  Birkhauser, Boston, 2001.

\bibitem{grochenig1999}
Karlheinz. Gr{\"o}chenig and Christopher Heil, \emph{Modulation spaces and
  pseudodifferential operators}, Integral Equations Operator Theory \textbf{34}
  (1999), no.~4, 439--457.

\bibitem{heil07}
Christopher Heil, \emph{History and evolution of the density theorem for
  {G}abor frames}, J. Fourier Anal. Appl. \textbf{13} (2007), no.~2, 113--166.

\bibitem{heil_powell09}
Christopher Heil and Alexander~M. Powell, \emph{Regularity for complete and
  minimal {G}abor systems on a lattice}, Illinois J. Math., to appear.

\bibitem{heil_powell06}
Christopher Heil and Alexander~M. Powell, \emph{Gabor {S}chauder bases and the
  {B}alian-{L}ow theorem}, J. Math. Phys. \textbf{47} (2006), no.~11, 1--21.

\bibitem{kozek1998}
Werner Kozek, \emph{Adaptation of {W}eyl-{H}eisenberg frames to underspread
  environments}, Gabor analysis and algorithms, Appl. Numer. Harmon. Anal.,
  Birkh\"auser Boston, Boston, MA, 1998, pp.~323--352.

\bibitem{low1985}
Francis~E. Low, \emph{Complete sets of wave packets}, A passion for physics -
  essays in honor of Geoffrey Chew (C.~DeTar et~al., ed.), World Scientific,
  Singapore, 1985, pp.~17--22.

\bibitem{nitzan09}
Shahaf Nitzan, \emph{Frame-type systems}, Ph.D. thesis, Tel-Aviv University,
  2009, Doctoral Thesis.

\bibitem{nitzan2009quasi}
Shahaf Nitzan and Alexander Olevskii, \emph{Quasi-frames of translates}, C. R.
  Math. Acad. Sci. Paris \textbf{347} (2009), no.~13-14, 739--742.

\bibitem{nitzan_olevskii07}
Shahaf Nitzan-Hahamov and Alexander Olevskii, \emph{Sparse exponential systems:
  completeness with estimates}, Israel J. Math. \textbf{158} (2007), 205--215.

\bibitem{ramanathan_steger95}
Jayakumar Ramanathan and Tim Steger, \emph{Incompleteness of sparse coherent
  states}, Appl. Comput. Harmon. Anal. \textbf{2} (1995), no.~2, 148--153.

\bibitem{seip92}
Kristian Seip, \emph{Density theorems for sampling and interpolation in the
  {B}argmann-{F}ock space. {I}}, J. Reine Angew. Math. \textbf{429} (1992),
  91--106.

\bibitem{stein70}
Elias~M. Stein, \emph{Singular {I}ntegrals and {D}ifferentiability {P}roperties
  of {F}unctions}, Princeton Mathematical Series, No. 30, Princeton University
  Press, Princeton, N.J., 1970.

\bibitem{strohmer2006}
Thomas Strohmer, \emph{Pseudodifferential operators and {B}anach algebras in
  mobile communications}, Appl. Comput. Harmon. Anal. \textbf{20} (2006),
  no.~2, 237--249.

\bibitem{young01}
Robert~M. Young, \emph{An {I}ntroduction to {N}onharmonic {F}ourier {S}eries},
  revised first ed., Academic Press Inc., San Diego, CA, 2001.

\end{thebibliography}
\providecommand{\bysame}{\leavevmode\hbox to3em{\hrulefill}\thinspace}
\providecommand{\MR}{\relax\ifhmode\unskip\space\fi MR }
\providecommand{\MRhref}[2]{%
  \href{http://www.ams.org/mathscinet-getitem?mr=#1}{#2}
}
\providecommand{\href}[2]{#2}

\end{document}